\newtheorem{theorem}{Theorem}[section]
\newtheorem{lemma}[theorem]{Lemma}
\newtheorem{proposition}[theorem]{Proposition}
\newtheorem{corollary}[theorem]{Corollary}
\theoremstyle{definition}
\newtheorem{definition}[theorem]{Definition}
\newtheorem{property}[theorem]{Property}
\newtheorem{remark}[theorem]{Remark}
\numberwithin{equation}{section}
\newtheorem{ex}{Example}[section]
\newcommand{\modint}{\displaystyle\copy\tratto\kern-10.4pt\int\limits}
\newcommand{\D}{\mathbb D}
\def\R{{\rm I\!R}}
\def\N{{\rm I\!N}}
\def\OV{\overline}
\def\GG{G\Gamma}
\def\G{\Gamma}
\def\O{\Omega}
\def\log{{\rm Log\,}}
\def\a{\alpha}
\def\b{\beta}
\def\d{\delta}
\def\f{\varphi}
\def\g{\gamma}
\def\p{\partial}
\def\s{\sigma}
\def\t{\theta}
\def\LEQ{\leqslant}
\def\GEQ{\geqslant}
\def\Log{{\rm Log\,}}
\def\WT{\widetilde}
\def\DST{\displaystyle}
\def\pr{{\bf Proof:\\ }}
\def\HF{\hfill{$\diamondsuit$\\ }}
\def\NN{\nonumber}
\def\up{{\frac1p}}
\def\GG{G\Gamma}
\def\G{\Gamma}
\def\O{\Omega}
\def\a{\alpha}
\def\b{\beta}
\def\d{\delta}
\def\f{\varphi}
\def\g{\gamma}
\def\p{\partial}
\def\s{\sigma}
\def\t{\theta}
\def\ln{{\rm Log\,}}
\def\Log{{\rm Log\,}}
\def\DST{\displaystyle}
\def\dist{{\rm dist\,}}
\def\up{\frac1p}
\def\R{{\rm I\!R}}
\date{\today}                                       
\def\OVO{{\overline \Omega}}
\def\vws{{\rm v.w.s.\,}}
  \def\LUOdpO{{L^1(\O,\dist(x,\p\O))}}
  \def\LUO{{L^1(\Omega)}}
  \def\dx{\,dx}
\begin{document}

\title[Some  new results related to $\GG$-spaces and interpolation]{Some  new results related to Lorentz $\GG$-spaces\\ and interpolation}

\author[ I. Ahmed, A. Fiorenza, M.R. Formica, A. Gogatishvili \MakeLowercase{and} J.M. Rakotoson]{{\bf   Irshaad Ahmed}$^1$, {\bf Alberto Fiorenza}$^2$, {\bf Maria Rosaria Formica}$^3$,  {\bf Amiran Gogatishvili}$^4$ \MakeLowercase{and} {\bf Jean Michel Rakotoson}$^{5*}$}

\address{$^{1}${\small  Abdus Salam School of Mathematical Sciences -}{\small GC University -Lahore, {\bf Pakistan}}}
\email{\textcolor[rgb]{0.00,0.00,0.84}{\small quantized84@yahoo.com}}

\address{{$^{2}$ Universit\`a di Napoli "Federico II",}
{\small  via Monteoliveto, 3, I-80134 Napoli,  {\bf Italy}}
{\small and Istituto per le Applicazioni del Calcolo "Mauro Picone" }
{\small Consiglio Nazionale delle Ricerche} 
{\small via Pietro Castellino, 111 I-80131Napoli,  {\bf Italy}}}
\email{\textcolor[rgb]{0.00,0.00,0.84}{\small    fiorenza@unina.it}}

\address{$^{3}${\small Universit\`a degli Studi di Napoli "Parthenope",
via  Generale Parisi 13, 80132, Napoli, {\bf Italy}}}
\email{\textcolor[rgb]{0.00,0.00,0.84}{\small   mara.formica@uniparthenope.it}}

\address{$^{4}${\small  Institute of  Mathematics of the   Czech Academy of Sciences -
 }
 {\small 
\v Zitn\'a, 115 67 Prague 1, {\bf Czech Republic}}}
\email{\textcolor[rgb]{0.00,0.00,0.84}{\small  gogatish@math.cas.cz}}

\address{$^{5}$ 
{\small Laboratoire de Mathématiques et Applications - Université de Poitiers,}
{\small 11  Bd  Marie et Pierre Curie,T\'el\'eport 2, }
{\small 86073 Poitiers Cedex 9, {\bf France}}} 
\email{\textcolor[rgb]{0.00,0.00,0.84}{\small  rako@math.univ-poitiers.fr,   jean.michel.rakotoson@univ-poitiers.fr }}



\subjclass[2010]{Primary 46E30,46B70, Secondary 35J65.}

\keywords{Grand and Small Lebesgue spaces, classical Lorentz-spaces, Interpolation, very weak solution.}

\date{Received: xxxxxx; Revised: yyyyyy; Accepted: zzzzzz.
\newline \indent $^{*}$Corresponding author}

\begin{abstract}
 We compute the $K$-functional related to some couple of spaces as small or classical Lebesgue space or Lorentz-Marcinkiewicz spaces completing the results of \cite {FFGKR}. This computation allows to determine the interpolation space in the sense of Peetre for such couple. It happens that the result is always a $\GG$-space, since this last space covers many spaces.\\
 The motivations of such study are various, among them we wish to obtain a regularity estimate for the so called very weak solution of a linear equation in a domain $\O$ with data in the space of the integrable function with respect to the distance function to the boundary of $\O$.
\end{abstract} \maketitle

\section{\bf \large Introduction}
The present work finds its motivation in the recent results in \cite{FFR,DR,Rak}. The original question comes from an unpublished manuscript by H. Brezis (see comments in \cite{DR}) and later presented in \cite{Br2} (see also the mention made in \cite{V}) concerning the following problem :
let $f$ be given in $\LUOdpO$ ($\O$ bounded smooth open set of $\R^n$), then   H. Brezis shows the existence and uniqueness of a function $u\in \LUO$ satisfying
$$|u|_\LUO\LEQ c|f|_\LUOdpO$$
with
$$GD(\O)=\begin{cases}-\DST\int_\O u\Delta\f\dx=\int_\O f\f\dx,\quad\forall\f\in C^2_0(\OV\O),\\
\hbox{with }C^2_0(\OV\O)=\Big\{\f\in C^2(\OVO),\ \f=0\hbox{ on }\p\O\Big\}.
\end{cases}$$
Therefore, the question of the integrability of the generalized derivative $v:\p_i v=\DST\frac{\p v}{\p x_i}$ arises in a natural way and was raised already in the note by H. Brezis and developed in \cite{DR}, \cite{Rak}, \cite{rak2}. More generally, the question of the regularity of $u$ is arised, according to $f$.

In \cite{DGRT, FFR}, we have shown the following theorem:

\begin{theorem}\label{tt1}\ \\
{\it Let $\O$ be a bounded open set of class $C^2$ of $\R^n$, $|\O|=1$ and $\a\GEQ\dfrac1{n'}$ where $n'=\dfrac n{n-1},$ $ f\in L^1(\O;\d),$ with $\d(x)=\dist(x;\p\O).$ \\

Consider $u\in L^{n',\infty}(\O),$ the very weak solution (\vws) of 
\begin{equation}\label{eqeq1}
-\int_\O u\Delta\f dx=\int_\O f\f dx\qquad\forall\,\f\in C^2(\OVO),\ \f=0\ on\ \p\O.\end{equation}
Then,
\begin{enumerate}
\item if  $f\in L^1\Big(\O;\d(1+|\Log\d|)^\a\Big) $ and $\a>\dfrac1{n'}:$ 
$$u\in L^{(n',n\a- n+1}(\O)=G\G(n',1;w_\a),\ w_\a(t)=t^{-1}(1-\log t)^{\a-1-\frac1{n'}}$$
and
\begin{equation}\label{eqeq2}
||u||_{G\G(n',1;w_\a)}\LEQ K_0|f|_{L^1\left(\O;\d(1+|\Log\d|)^\a\right)}\end{equation}
\item if $f\in L^1\Big(\O;\delta\big(1+|\Log\delta|\big)^{\frac1{n'}}\Big)$ then
$$u\in L^{n'}(\O) \hbox{ and similar estimate as (\ref{eqeq2}) holds.}$$
\end{enumerate}
}
\end{theorem}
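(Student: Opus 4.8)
The plan is to argue by duality, converting the bound on $u$ into a sharp \emph{weighted} a priori estimate for the Dirichlet Laplacian on $\O$. Set $X_\a:=G\G(n',1;w_\a)$ for $\a>1/n'$ and $X_{1/n'}:=L^{n'}(\O)$; each $X_\a$ is a rearrangement invariant Banach function space with the Fatou property, so $X_\a''=X_\a$ and it suffices to control $\|u\|_{X_\a}$ by testing against the unit ball of the associate space $X_\a'$, a Lorentz--Marcinkiewicz (grand-Lebesgue type) space whose explicit form is one of the outputs of the $K$-functional/duality computations of the present paper and of \cite{FFGKR,FFR}.

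First we record the representation. Given $\psi\in C_c^\infty(\O)$, let $\f=\f_\psi\in C^2(\OVO)$ solve $-\Delta\f=\psi$ in $\O$, $\f=0$ on $\p\O$ (classically solvable since $\O\in C^2$); then $\f$ is an admissible test function in \eqref{eqeq1}, so $\int_\O u\,\psi\,dx=\int_\O f\,\f\,dx$. Factoring out the weight,
$$\Big|\int_\O u\,\psi\,dx\Big|\;\LEQ\;\big\|\f_\psi\,\d^{-1}(1+|\Log\d|)^{-\a}\big\|_{L^\infty(\O)}\;|f|_{L^1\left(\O;\,\d(1+|\Log\d|)^\a\right)},$$
so the whole statement reduces to the a priori estimate
$$\big\|\f_\psi\,\d^{-1}(1+|\Log\d|)^{-\a}\big\|_{L^\infty(\O)}\;\LEQ\;c\,\|\psi\|_{X_\a'}\qquad(\psi\in C_c^\infty(\O)).$$
A density/limiting argument (passing from $C_c^\infty(\O)$ to all of $X_\a'$, and from the smooth $\psi$ back to the very weak solution $u$, which is unique in $L^{n',\infty}(\O)$) then gives \eqref{eqeq2} by taking the supremum over $\|\psi\|_{X_\a'}\LEQ1$.

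To establish the a priori estimate we would invoke the two-sided Green function bound for $C^2$ domains, $0\LEQ G_\O(x,y)\LEQ c\min\{|x-y|^{2-n},\,\d(x)\d(y)|x-y|^{-n}\}$ for $n\GEQ3$ (with the analogous bound for $n=2$). Since $\f_\psi(x)=\int_\O G_\O(x,y)\psi(y)\,dy$, dividing by $\d(x)$ and discarding the negligible contribution of the ball $\{|x-y|<\d(x)\}$ leaves a kernel dominated by the Riesz kernel of order one $|x-y|^{1-n}$ (with extra decay where $\d(y)$ is small); the contribution of $x$ in the interior is harmless. The crux is that $\big\||x-y|^{1-n}\big\|_{L^{n'}(\{|x-y|<1\})}$ is \emph{logarithmically} divergent, precisely at the rate $(1+|\Log r|)^{1/n'}$ as the cut-off $r=\d(x)\to0$; hence H\"older's inequality gives $|\f_\psi(x)|\LEQ c\,\d(x)(1+|\Log\d(x)|)^{1/n'}\|\psi\|_{L^n(\O)}$, which is exactly part~(2). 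For $\a>1/n'$ one runs the same estimate with $\psi$ allowed to lie in the larger grand-Lebesgue-type space $X_\a'$: the improved logarithmic integrability of $\psi$ trades off against the additional factor $(1+|\Log\d(x)|)^{\a-1/n'}$ now available, and it is here --- via the $K$-functional identities of the paper (resp.\ \cite{FFGKR}) --- that the weight $w_\a(t)=t^{-1}(1-\Log t)^{\a-1-\frac1{n'}}$ and the Lorentz index $n\a-n+1$ come out and the target is certified to be optimally $L^{(n',\,n\a-n+1}(\O)=G\G(n',1;w_\a)$ and not something strictly larger.

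The main obstacle will be precisely this last, sharp weighted elliptic estimate: one must identify the \emph{exact} logarithmic rate at which $\f_\psi/\d$ may blow up at $\p\O$ when $\psi$ only belongs to the grand-space $X_\a'$, and then match it on the nose with the optimal endpoint space --- which is why the $K$-functional computations are needed and a soft argument will not suffice. Everything else is routine: the duality, the Fatou property, the density reductions, and the borderline behaviour at $\a=1/n'$, where $w_\a$ loses its logarithmic factor, $X_\a'$ collapses to $L^n(\O)$, and the estimate above reduces to the classical Hardy/Riesz bound.
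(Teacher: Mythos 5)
First, a point of orientation: Theorem \ref{tt1} is not proved in this paper at all --- it is imported verbatim from \cite{DGRT,FFR}, and those references do proceed along the lines you sketch (duality against the Dirichlet Green function, the pointwise bound $G_\O(x,y)\lesssim\min\{|x-y|^{2-n},\d(x)\d(y)|x-y|^{-n}\}$, and weighted estimates for $\f_\psi/\d$ near $\p\O$). So your overall strategy is the right one, and your treatment of case (2) is essentially complete: splitting at $|x-y|\sim\d(x)$, the inner ball contributes $\lesssim\d(x)\|\psi\|_{L^n}$ and the outer region contributes $\lesssim\d(x)\big(1+|\Log\d(x)|\big)^{1/n'}\|\psi\|_{L^n}$, which is exactly the endpoint estimate needed.

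However, for case (1) --- the actual content of the theorem --- your text stops precisely where the proof has to begin, and says so ("the main obstacle will be precisely this last, sharp weighted elliptic estimate"). Concretely, two things are asserted but not established. First, the identification of the associate space: you need $X_\a=G\G(n',1;w_\a)=L^{(n',\,n\a-n+1}$ and hence $X_\a'=L^{n),\,n\a-n+1}$ (grand Lebesgue), together with the Lorentz--Luxemburg/Fatou argument that reduces $\|u\|_{X_\a}$ to testing against $C^\infty_c$ functions in the unit ball of $X_\a'$; this is available (e.g.\ via \cite{dff}) but must be cited or proved, and the density step is not automatic since grand spaces do not have absolutely continuous norm. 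Second, and decisively, the weighted elliptic bound
$$\big\|\f_\psi\,\d^{-1}(1+|\Log\d|)^{-\a}\big\|_{L^\infty(\O)}\lesssim\|\psi\|_{L^{n),\,n\a-n+1}}$$
amounts, after the Green-function reduction, to the computation of the small-Lebesgue norm of the truncated Riesz kernel, namely
$$\big\|\,|x-\cdot|^{1-n}\chi_{\{|x-\cdot|>\d(x)\}}\big\|_{(n',\,n\a-n+1}\;\approx\;\big(1+|\Log\d(x)|\big)^{\a},$$
and this is nowhere carried out in your proposal. The $K$-functional identities of the present paper (or of \cite{FFGKR}) concern interpolation couples such as $(L^{p)},L^{(p})$ or $(L^{p,\infty},L^{(p})$ and do not by themselves deliver this pointwise weighted estimate, so invoking them here does not close the gap. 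As written, the proposal is a correct program whose central estimate --- the only step where the weight $w_\a(t)=t^{-1}(1-\Log t)^{\a-1-\frac1{n'}}$ and the index $n\a-n+1$ can actually emerge --- remains unproved.
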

\ \\
Note that the assumption on the regularity of $\O$, needed in the proof of Theorem \ref{tt1} is necessary, for the development of the theory of very weak solutions; we stress that the estimates in this paper will be obtained following arguments valid regardless of the regularity of $\O$, which will be definitively dropped in our statements.\\

The Lorentz $\GG$-space is defined as follows :
\begin{definition}{\bf of   Generalized Gamma space with double weights (Lorentz-$\GG$)}\\
{\it Let $w_1,\ w_2$  be two weights on $(0,1)$, $m\in[1,+\infty]$, $1\LEQ p<+\infty$. We assume the following conditions:
\begin{itemize}
\item[c1)] There exists $K_{12}>0$ such that $w_2(2t)\LEQ K_{12}w_2(t)\ \forall\,t\in(0,1/2)$. The space
$L^p(0,1;w_2)$ is continuously embedded in $L^1(0,1)$.
\item[c2)] The function $\DST\int_0^tw_2(\s)d\s$ belongs to $L^{\frac mp}(0,1;w_1)$.
\end{itemize}
A generalized Gamma space with double weights is the set :
$$G\G(p,m;w_1,w_2)=\left\{v:\O\to\R\hbox{ measurable }\int_0^tv_*^p(\s)w_2(\s)d\s \hbox{ is in }L^{\frac mp}(0,1;w_1)\right\}.$$
}
\end{definition}
A similar definition has been considered in \cite{GKPS}. They were interested in the embeddings between $\GG$-spaces.
\begin{property}\ \\
{\it
Let $G\G(p,m;w_1,w_2)$ be a Generalized Gamma space with double weights and let us define for $v\in G\G(p,m;w_1,w_2)$
$$\rho(v)=\left[\int_0^1w_1(t)\left(\int_0^tv_*^p(\s)w_2(\s)d\s\right)^{\frac mp}dt\right]^{\frac1m}$$
with the obvious change for $m=+\infty$.\\
Then,
\begin{enumerate}
\item $\rho$ is a quasinorm.
\item$G\G(p,m;w_1,w_2)$ endowed  with $\rho$ is a quasi-Banach function space.
\item If $w_2=1$ $$G\G(p,m;w_1,1)=G\Gamma(p,m;w_1).$$
\end{enumerate}
}
\end{property}

\begin{ex}{\bf of weights}\\
Let $w_1(t)=(1-\Log t)^\g,\quad w_2(t)=(1-\Log t)^\b$ with $(\g,\b)\in\R^2$. Then
$$w_2\hbox{ satisfies condition c1) and $w_1$ and $w_2$  are in  }L_{exp}^{\max(\g;\b)}\big(]0,1[\big)\,.$$
\end{ex}

{\bf Question 1} The natural question is {\it how to extend of Theorem \ref{tt1} for $\a<\dfrac1{n'}$  and how to improve the estimate  when $\alpha=\dfrac1{n'}$}?\\

Since the solution of (\ref{eqeq1}) satisfies also
\begin{equation}\label{eqeq3}
|u|_{L^{n',\infty}(\O)}\LEQ K_1|f|_{L^1(\O;\d)},\end{equation}
the natural idea to obtain an estimate is to use the real interpolation method of Marcinkiewicz (see \cite{Bennett-Sharpley, Bergh-Lofstrom,  JCHC} ) to derive 
\begin{equation}\label{eqeq4}
|u|_{(L^{n',\infty},L^{(n'})_{\a,1}}\LEQ K_2|f|_{L^1(\O;\d(1+|\Log\d|)^{\a})}\qquad\hbox{for }0<\a\LEQ1.\end{equation}
Note that $L^{(n',1}=L^{(n'}$ (see below for a full definition.)\\

\ \\
{\bf Question 2} {\it How to characterize the space $\Big(L^{n',\infty}(\O),L^{(n'}(\O)\Big)_{\a,1}?$}\\

We still  have not an answer to this question. Therefore, we will provide a lower estimate for the norm of $u$ in relation (\ref{eqeq4}), a 
 particular overbound can be obtained from our work  made in \cite{FFGKR} :\\
Since $L^{n',\infty}(\O)\subset L^{n')}$, then we have
$$\Big(L^{n',\infty}(\O),L^{(n'}(\O)\Big)_{\a,1}\subset\Big(L^{n')}(\O),L^{(n'}(\O)\Big)_{\a,1}$$
and we have shown in \cite{FFGKR} the following
\begin{theorem}\label{tt2}{\bf (characterization of the interpolation between Grand and Small Lebesgue space)}\\
{\it 
$$\Big(L^{n')}(\O),L^{(n'}(\O)\Big)_{\a,1}=G\G(n';1;w_1;w_2)\hbox{ with } w_1(t)=\dfrac{(1-\Log t)^{\a-1}}t,\ w_2(t)=\dfrac1{1-\Log t}.$$
}
\end{theorem}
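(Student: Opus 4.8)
The plan is to compute the $K$-functional $K(f,t;L^{n')},L^{(n'})$ explicitly and then invoke the general recipe $(X_0,X_1)_{\alpha,1}=\{f:\int_0^1 t^{-\alpha}K(f,t;X_0,X_1)\,\frac{dt}{t}<\infty\}$ (up to equivalence of norms) to identify the interpolation space with a Lorentz-$\GG$ space. For the small Lebesgue endpoint one has the known description $L^{(p'}=\GG(p',1;\ell^{-1}(1-\Log\ell)^{-1},1)$ in the $\GG$-formalism, while the grand Lebesgue endpoint $L^{p')}$ likewise admits a $\GG$-representation via its associate/Lorentz-type structure; the first step is therefore to recall from \cite{FFGKR} (and \cite{FFR}) the precise rearrangement-invariant expressions for the norms $\|f\|_{L^{p')}}$ and $\|f\|_{L^{(p'}}$ in terms of $f_*$, so that $K(f,t)$ becomes a one-dimensional optimization over decompositions $f_*=g_*+h_*$.

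The core computation is the estimate of $K(f,t;L^{n')},L^{(n'})$. I would follow the standard truncation strategy: for the decomposition split $f_*$ at a level depending on $t$ (typically $f_*-f_*(s)$ on $\{f_*>f_*(s)\}$ versus the tail), estimate each piece in the respective norm, and optimize in $s=s(t)$. The expected outcome, consistent with Theorem~\ref{tt2}, is
$$K(f,t;L^{n')},L^{(n'})\approx \sup_{0<s<1}\frac{1}{\psi(s)}\left(\int_0^s f_*^{\,n'}(\sigma)\,w_2(\sigma)\,d\sigma\right)^{1/n'}\wedge\;t\,(\cdots),$$
with the weight $w_2(t)=(1-\Log t)^{-1}$ emerging from the small-Lebesgue side and the logarithmic gauge $\psi$ from the grand side; one then checks that plugging this into the $(\cdot)_{\alpha,1}$ integral produces exactly the weight $w_1(t)=t^{-1}(1-\Log t)^{\alpha-1}$. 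Concretely, the identity $\int_0^1 t^{-\alpha}\big(\sup_s \Phi(s)\big)\frac{dt}{t}$, after Fubini and the substitution controlling the supremum, collapses to $\big[\int_0^1 w_1(t)(\int_0^t f_*^{\,n'}w_2)^{1/n'}\cdot\frac{1}{?}\,dt\big]$, i.e. the defining quasinorm $\rho$ of $\GG(n',1;w_1,w_2)$.

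Two technical points need care. First, the upper and lower bounds for $K(f,t)$ must match up to multiplicative constants uniformly in $t\in(0,1)$ and in $f$; the lower bound is obtained by testing the infimum over decompositions against the specific near-optimal splitting, and the danger is that the grand-Lebesgue norm, being a supremum over $0<\varepsilon<p-1$, does not localize as cleanly as an integral norm, so one must exploit the reduction of $\|\cdot\|_{L^{p')}}$ to an equivalent $\sup$-type rearrangement functional (this is where \cite{FFGKR} is essential). Second, one must verify conditions c1) and c2) for the pair $(w_1,w_2)$ produced, which is routine: $w_2(t)=(1-\Log t)^{-1}$ is clearly doubling and $L^{n'}(0,1;w_2)\hookrightarrow L^1(0,1)$, and $\int_0^t w_2(\sigma)d\sigma\approx t(1-\Log t)^{-1}$ lies in $L^{1/n'}(0,1;w_1)$ since $\alpha\le 1$ keeps the resulting exponent of the logarithm integrable near $t=0$.

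I expect the main obstacle to be the sharp two-sided control of the $K$-functional at the grand-Lebesgue endpoint: converting $\|h\|_{L^{n')}}$ into a form amenable to optimization against the small-Lebesgue piece, and ensuring the optimization in the splitting parameter is genuinely attained (not merely bounded) so that the lower bound is not lossy. Once $K(f,t)$ is pinned down, the passage to the interpolation space is a direct, if slightly delicate, Hardy-type computation, and the recognition that the answer is precisely $\GG(n',1;w_1,w_2)$ follows by inspection of the defining functional $\rho$.
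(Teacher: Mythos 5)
You should first note that the paper does not prove Theorem \ref{tt2} at all: it is quoted from \cite{FFGKR}, and the relevant comparison is with the method used there and in the analogous results proved in Section 3 of this paper (Theorem \ref{311} with Corollary \ref{ct}, Lemma \ref{L1gc} and the theorems that follow). Your proposal identifies the right general framework (compute the $K$-functional, then evaluate the $(\cdot)_{\alpha,1}$ functional), but as written it is an outline in which the two substantive steps are missing. You never actually determine $K(f,t;L^{n')},L^{(n'})$: your candidate formula is left with placeholders ($\wedge\,t(\cdots)$ and $1/?$), the splitting level $s(t)$ is never specified, and the proposed structure as a minimum of two quantities is not what occurs --- for these couples $K$ is equivalent to a \emph{sum} of two localized pieces, obtained from the specific truncation $g=\big(|f|-f_*(\varphi(t))\big)_+$, $h=f-g$ with a logarithmically calibrated $\varphi(t)$ of the type $e^{1-t^{-q}}$ (cf.\ (\ref{3.2019}) and Lemma \ref{L1gc}); choosing this $\varphi$ is precisely where the weights $w_1(t)=t^{-1}(1-\Log t)^{\alpha-1}$ and $w_2(t)=(1-\Log t)^{-1}$ are produced, and it is absent from your plan. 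Likewise the passage from $K$ to the $\GG$-quasinorm is not a formal Fubini: it requires the change of variable $x=\varphi(t)$, the discretization Lemmas \ref{l60} and \ref{cvL1} (Lemmas 6.1 and 6.3 of \cite{FFGKR}), the Bennett--Rudnick lemma to replace the supremum coming from the grand norm by a logarithmically weighted integral, Hardy's inequality, and an absorption of the boundary term as in the estimate of $I_r$ in the proof of Corollary \ref{ct}. None of this is carried out, and your closing paragraph itself concedes that the decisive two-sided control at the grand endpoint is unresolved.

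Two further points. The logic of your two-sided bound is inverted: testing the infimum defining $K$ against a specific near-optimal decomposition yields an \emph{upper} bound for $K$, not a lower one; the lower bound must be proved for an arbitrary decomposition $f=g+h$ via $f_*(x)\le g_*(x/2)+h_*(x/2)$ together with the endpoint norms, as is done in every $K$-computation in the paper. Moreover, the worry that the grand norm ``does not localize'' is already settled by its rearrangement form $\|h\|_{L^{p)}}\approx\sup_{0<s<1}(1-\Log s)^{-1/p}\big(\int_s^1 h_*^p\,dx\big)^{1/p}$, which makes the estimate of the $h$-piece mechanical once the truncation level is fixed (and, incidentally, your $\GG$-representation of the small space carries the wrong logarithmic exponent: the weight is $t^{-1}(1-\Log t)^{-1/n'}$, not $t^{-1}(1-\Log t)^{-1}$). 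In sum: correct framework, but the proof itself --- the explicit $\varphi(t)$, the two-sided $K$-functional estimate, and the Hardy/discretization reduction identifying the result with $\GG(n',1;w_1,w_2)$ --- is not supplied.
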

(see next section for the definition of $\GG$).\\

Therefore, we  have the following {\bf non optimal result} but valid for all $\alpha.$\\

\begin{proposition}\label{p1}\ \\
{\it
Let $u$ be the solution of (\ref{eqeq1}). Then,
$$||u||_{G\G(n';1;w_1;w_2)}=\int_0^1(1-\Log t)^{\a}\left(\int_0^t\dfrac{u_*^{n'}(x)dx}{1-\Log x}\right)^{\frac1{n'}}\dfrac{dt}{(1-\Log t)t}\LEQ K_4|f|_{L^1(\O;\d(1+|\Log\d|)^\a)}$$
whenever $0<\a<1.$
}
\end{proposition}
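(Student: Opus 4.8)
The plan is to deduce the inequality from (\ref{eqeq3}), Theorem \ref{tt1} and Theorem \ref{tt2} by interpolating the (linear) solution operator $T:f\longmapsto u$. By uniqueness of the very weak solution, $T$ is linear and well defined on $L^1(\O;\d)$, and (\ref{eqeq3}) says exactly that
$$T:L^1(\O;\d)\longrightarrow L^{n',\infty}(\O)\quad\hbox{is bounded.}$$
On the other hand, applying part (1) of Theorem \ref{tt1} with $\a=1$ (admissible, since $\frac1{n'}<1$), and recalling that $n\cdot1-n+1=1$ and $L^{(n',1}=L^{(n'}$, we obtain the second endpoint bound
$$T:L^1\big(\O;\d(1+|\Log\d|)\big)\longrightarrow L^{(n'}(\O)\quad\hbox{is bounded.}$$

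Next I would identify the interpolation space between the two data spaces. For a couple of weighted $L^1$-spaces one has the elementary formula $K\big(g,t;L^1(\O;\omega_0),L^1(\O;\omega_1)\big)=\int_\O\min(\omega_0,t\omega_1)|g|\dx$, and computing $\int_0^\infty t^{-\a}K(g,t;\cdot)\,\dtt$ gives, for every $\a\in(0,1)$,
$$\big(L^1(\O;\d),\,L^1(\O;\d(1+|\Log\d|))\big)_{\a,1}=L^1\big(\O;\d(1+|\Log\d|)^\a\big)$$
with equivalent norms (the limiting weight being $\d^{1-\a}\big(\d(1+|\Log\d|)\big)^\a=\d(1+|\Log\d|)^\a$). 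Feeding the two endpoint estimates above into the interpolation theorem for linear operators (see e.g. \cite{Bergh-Lofstrom}) applied to the exact functor $(\,\cdot\,,\,\cdot\,)_{\a,1}$, we get
$$T:L^1\big(\O;\d(1+|\Log\d|)^\a\big)\longrightarrow\big(L^{n',\infty}(\O),\,L^{(n'}(\O)\big)_{\a,1}\quad\hbox{is bounded},$$
which is precisely (\ref{eqeq4}).

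Finally I would pass from the Marcinkiewicz endpoint to the grand Lebesgue endpoint. Since $L^{n',\infty}(\O)\subset L^{n')}(\O)$, monotonicity of the real interpolation method yields
$$\big(L^{n',\infty}(\O),\,L^{(n'}(\O)\big)_{\a,1}\hookrightarrow\big(L^{n')}(\O),\,L^{(n'}(\O)\big)_{\a,1},$$
and by Theorem \ref{tt2} the right-hand side equals $G\G(n';1;w_1;w_2)$ with $w_1(t)=\dfrac{(1-\Log t)^{\a-1}}{t}$ and $w_2(t)=\dfrac1{1-\Log t}$. Writing out the quasinorm $\rho$ of this $\GG$-space for $p=n'$, $m=1$ and these two weights reproduces exactly the integral displayed in the statement. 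Chaining the continuous inclusions,
$$\|u\|_{G\G(n';1;w_1;w_2)}\LEQ c\,\|u\|_{(L^{n',\infty}(\O),\,L^{(n'}(\O))_{\a,1}}\LEQ c\,K_2\,|f|_{L^1(\O;\d(1+|\Log\d|)^\a)},$$
so the assertion holds with $K_4=cK_2$.

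The delicate point is the second endpoint: one must make sure that, at $\a=1$, the family $L^{(n',q}$ of Theorem \ref{tt1} degenerates to the small Lebesgue space $L^{(n'}$ — equivalently, that $G\G(n',1;w)$ with $w(t)=t^{-1}(1-\Log t)^{-1/n'}$ coincides with $L^{(n'}(\O)$ — and that $T$ is a single, consistently defined linear map on $L^1(\O;\d)$, so that interpolation of operators legitimately applies to it on the couple of data spaces. The remaining ingredients, namely the weighted $L^1$-interpolation identity and the monotonicity of $(\,\cdot\,,\,\cdot\,)_{\a,1}$, are routine; and the restriction $0<\a<1$ is exactly what makes $(\,\cdot\,,\,\cdot\,)_{\a,1}$ a genuine intermediate functor between the two endpoints, so that no optimality is claimed, consistently with the remark preceding the statement.
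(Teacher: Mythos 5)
Your argument is correct and is essentially the paper's own: the paper proves Proposition \ref{p1} by combining the endpoint estimate (\ref{eqeq3}), the case $\a=1$ of Theorem \ref{tt1} (noting $L^{(n',1}=L^{(n'}$), real interpolation of the solution operator to obtain (\ref{eqeq4}), the inclusion $(L^{n',\infty},L^{(n'})_{\a,1}\subset(L^{n')},L^{(n'})_{\a,1}$, and the characterization of Theorem \ref{tt2} --- exactly the chain you follow. Your only additions are the explicit weighted-$L^1$ interpolation identity for the data spaces and the remark on linearity of $T$, which are routine and consistent with the paper's (implicit) proof.
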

\ \\
To give a new improved statement for Proposition 1.5 namely, we will show the 
\begin{theorem}\ \\
 Let $1< p<\infty$, $0 < \theta<1$ and $1\LEQ r<\infty.$ Then
$$\|f\|_{ G\Gamma(p,r;w_1,w_2) \cap G\Gamma(\infty,r;v_1,v_2)}\lesssim \|f\|_{(L^{p,\infty}, L^{(p} )_{\theta,1}}  ,$$
where  $w_1(t)=t^{-1}(1-\ln t)^{r\theta-1}$, $w_2(t)= (1-\ln t)^{-1}, $  $v_1(t)=t^{-1}(1-\ln t)^{r\theta(1-1/p)-1}$ and $v_2(t)= t^{1/p}.$
\end{theorem}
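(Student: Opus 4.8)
The strategy is to bound the $K$-functional for the couple $(L^{p,\infty},L^{(p})$ from below and then translate this bound, after integration in $\theta$-weighted fashion, into the $\GG$-norm estimates for both factors of the intersection on the left. First I would recall that for every decomposition $f=g+h$ with $g\in L^{p,\infty}$ and $h\in L^{(p}$ one has, by subadditivity of the decreasing rearrangement on dyadic blocks, a pointwise control: for $0<t<1$,
\begin{equation*}
t^{1/p}f_*(t)\lesssim K(t^{1/p},f;L^{p,\infty},L^{(p})\quad\text{and}\quad \Bigl(\int_0^t \frac{f_*^p(s)\,ds}{1-\ln s}\Bigr)^{1/p}\lesssim t^{-1/p}\,\sup_{0<s<t}s^{1/p}K(\cdots).
\end{equation*}
The point is that $L^{(p}$, the small Lebesgue space, has a known expression of its norm (or of the relevant $K$-functional pieces) in terms of $\int_0^t f_*^p w_2$ with $w_2=(1-\ln s)^{-1}$, while $L^{p,\infty}$ contributes the factor $t^{1/p}f_*(t)$; this is exactly the dichotomy reflected by the two weight pairs $(w_1,w_2)$ and $(v_1,v_2)$ in the statement, the second one carrying the extra power $t^{1/p}$ coming from $L^{p,\infty}$.

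Concretely, I would proceed as follows. Step one: fix $t$ and an (almost) optimal decomposition $f=g_t+h_t$ realizing $K(t^{1/p},f)$ up to a constant; estimate $h_t$ in $L^{(p}$ from below to get $\int_0^t f_*^p(s)(1-\ln s)^{-1}ds$ controlled (after raising to $1/p$) by $K(t^{1/p},f)/t^{1/p}$, and estimate $g_t$ in $L^{p,\infty}$ to get $s^{1/p}f_*(s)\lesssim K(s^{1/p},f)$ for $s\le t$. Step two: recognize that $\|f\|_{(L^{p,\infty},L^{(p})_{\theta,1}}=\int_0^\infty t^{-\theta}K(t^{1/p},f)\,\frac{dt}{t}$ (up to the parametrization $t\mapsto t^{1/p}$ and a change of variables; only the behaviour near $t=0$ matters since the spaces coincide away from the origin on a finite measure space) and that this is comparable, via the standard Hardy-type manipulation, to $\int_0^1 (1-\ln t)^{r\theta-1}\bigl(\sup_{0<s<t}s^{1/p}f_*(s)\,\text{-type quantities}\bigr)\,\frac{dt}{t}$ after discretizing $t=e^{1-1/\tau}$ or $t=2^{-k}$. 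Step three: feed the two pointwise bounds from Step one into this integral. Using $w_1(t)=t^{-1}(1-\ln t)^{r\theta-1}$ together with the $L^{(p}$-bound yields $\|f\|_{G\Gamma(p,r;w_1,w_2)}\lesssim\|f\|_{(L^{p,\infty},L^{(p})_{\theta,1}}$ directly. For the second factor, combine the $L^{p,\infty}$-bound $s^{1/p}f_*(s)\lesssim K(s^{1/p},f)$ with the observation that $\int_0^t f_*^p(s)s\,ds\le \sup_{s\le t}(s^{1/p}f_*(s))^p\cdot\int_0^t s^{1-?}\ldots$; more carefully, $\bigl(\int_0^t f_*^p(s)\,w_2^{(2)}(s)\,ds\bigr)^{1/p}$ with $w_2^{(2)}=t^{1/p}$ should be majorized by a power of $t$ times $\sup_{s\le t}s^{1/p}f_*(s)$, and then the weight $v_1(t)=t^{-1}(1-\ln t)^{r\theta(1-1/p)-1}$ is designed precisely so that $\int_0^1 v_1(t)\bigl(\int_0^t f_*^p w_2^{(2)}\bigr)^{r/p}dt$ reduces, after the change of variables, to (a constant times) $\bigl(\int_0^\infty t^{-\theta}K(t^{1/p},f)\,dt/t\bigr)^r$. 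The exponent bookkeeping — checking that $r\theta(1-1/p)-1$ and the power $1/p$ in $v_2$ conspire correctly with the factor $t^{1/p}$ from $L^{p,\infty}$ — is where I would be most careful.

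I expect the main obstacle to be precisely this exponent/change-of-variables bookkeeping in the second factor $G\Gamma(\infty,r;v_1,v_2)$: one must verify that plugging the $L^{p,\infty}$ estimate into $\int_0^t f_*^p(s)s^{1/p}\,ds$ and then into the $v_1$-weighted $L^{r/p}$ integral produces exactly the power $(1-\ln t)^{r\theta(1-1/p)-1}$ and no residual power of $t$, which forces a specific relation between the weights and the parameter shift $\theta\mapsto\theta(1-1/p)$ — this shift is the signature of interpolating with the $L^{\infty}$-endpoint type space and is the genuinely delicate point. A secondary technical nuisance is handling the normalization $|\O|=1$ and the behaviour of the $K$-functional for large $t$ (where $L^{p,\infty}$ and $L^{(p}$ become equivalent, so the integrand decays and contributes a harmless constant); I would dispose of this at the outset by restricting all integrals in $t$ to $(0,1)$. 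Once the two pointwise-to-integral passages are set up correctly, the rest is a routine application of a discrete Hardy inequality, for which I would cite \cite{Bennett-Sharpley} or \cite{Bergh-Lofstrom}.
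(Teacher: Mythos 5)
Your overall architecture (prove a separate estimate for each $\GG$-factor by bounding the $K$-functional of $(L^{p,\infty},L^{(p})$ from below and then integrating against the interpolation weight) matches the spirit of the paper's treatment of the \emph{second} factor, but the two pointwise inequalities you take as the foundation are false as stated, and the failure is exactly at the point you flag as delicate. First, $t^{1/p}f_*(t)\lesssim K(t^{1/p},f;L^{p,\infty},L^{(p})$ cannot hold: taking $f\in L^{(p}$ with the trivial decomposition $g=0$, $h=f$ it would give $f_*(t)\lesssim \|f\|_{L^{(p}}$ for all $t$, i.e.\ $L^{(p}\subset L^{\infty}$, which is false. The correct statement (Lemma \ref{L1LB} of the paper) is $\sup_{0<s<t}s^{1/p}f_*(s)\lesssim K(\rho(t),f;L^{p,\infty},L^{(p})$ with the \emph{logarithmic} parameter $\rho(t)=(1-\ln t)^{1/p-1}$, which comes from the lower bound $\|h\|_{L^{(p}}\gtrsim \left(\int_t^1(1-\ln s)^{-1/p}\tfrac{ds}{s}\right)v^{1/p}h_*(v)\approx(1-\ln t)^{1-1/p}v^{1/p}h_*(v)$ for $v<t/2$. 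The shift $\theta\mapsto\theta(1-1/p)$ in $v_1$ is then produced by $[\rho(t)]^{-\theta r}$ together with $\rho'(t)/\rho(t)\approx t^{-1}(1-\ln t)^{-1}$ in the substitution $u=\rho(t)$; it does not come from any power of $t$, and with your power-type parametrization $t^{1/p}$ the exponent bookkeeping you postpone cannot close (your reparametrized norm $\int_0^\infty t^{-\theta}K(t^{1/p},f)\,dt/t$ is in fact the $(p\theta,1)$-norm, another symptom of the same issue). Your second displayed bound, $\bigl(\int_0^t f_*^p(s)(1-\ln s)^{-1}ds\bigr)^{1/p}\lesssim t^{-1/p}\sup_{0<s<t}s^{1/p}K(\cdots)$, also fails: test $f_*(s)=s^{-1/p}\in L^{p,\infty}$, for which the left-hand side diverges (the integrand is $\sim s^{-1}(1-\ln s)^{-1}$) while the right-hand side is finite.

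Besides repairing the second factor with the logarithmic parameter, note that the first factor needs no new $K$-functional computation at all: since $L^{p,\infty}\hookrightarrow L^{p)}$, one has $(L^{p,\infty},L^{(p})_{\theta,r}\hookrightarrow(L^{p)},L^{(p})_{\theta,r}$, and the latter is exactly $\GG(p,r;w_1,w_2)$ by the characterization of \cite{FFGKR} (Theorem \ref{tt2} here); this is the content of Theorem \ref{t2}. The paper then obtains the stated theorem by combining this with Theorem \ref{t1} (the $\GG(\infty,r;v_1,v_2)$ bound via Lemma \ref{L1LB}) and the elementary embedding $(\cdot,\cdot)_{\theta,1}\hookrightarrow(\cdot,\cdot)_{\theta,r}$. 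So the missing ideas in your plan are precisely the logarithmic ``distance'' between $L^{(p}$ and $L^{p,\infty}$ encoded in $\rho(t)=(1-\ln t)^{1/p-1}$, and the embedding shortcut for the $\GG(p,r;w_1,w_2)$ part.
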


Thanks to this last theorem, we deduce from relation (1.4) a new estimate of the solution $u$ valid also for $\alpha<\dfrac1{n'}$ and better than Proposition 1.5 in that case. 
To complete the results of \cite{FFGKR}, we shall introduce
 different results on the  interpolation spaces namely, between $(L^{n'},L^{(n'})_{\t,r}$,   $(L^{n',\infty},L^{n'})_{\t,r}$, $(L^{p),\alpha},L^{p),\beta})_{\theta,r}$, $(L^{p)},L^p)_{\t,r}$, $(L^{p,\infty},L^p)_{\t,r}$.
 It happens all of these spaces are Lorentz G-gamma spaces.  We state few of those results.
\begin{theorem}\label{t1.3}\ \\
{\it 
For $0<\t<1,\ r\in[1,+\infty[$
$$(L^{n'},L^{(n'})_{\t,r}=G(n',r;w_1;1)\hbox{\quad with $w_1(t)=t^{-1}(1-\Log t)^{r\frac\t n-1}$.}$$

}

\end{theorem}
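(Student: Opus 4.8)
The statement identifies the real interpolation space $(L^{n'},L^{(n'})_{\theta,r}$ as the Lorentz $\GG$-space $G(n',r;w_1,1)$ with $w_1(t)=t^{-1}(1-\Log t)^{r\theta/n-1}$. The natural strategy is to reduce everything to the $K$-functional for the couple $(L^{n'},L^{(n'})$ and then evaluate the Peetre norm $\|f\|_{\theta,r}=\bigl(\int_0^\infty (t^{-\theta}K(f,t))^r\,\tfrac{dt}{t}\bigr)^{1/r}$. First I would recall (or cite from \cite{FFGKR}, whose computations this paper is "completing") the explicit formula, or at least a two-sided estimate, for $K(f,t;L^{n'},L^{(p})$ — here $p=n'$ — in terms of the decreasing rearrangement $f_*$. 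Since $L^{n'}\subset L^{(n'}$ (the small Lebesgue space is larger), for small $t$ the $K$-functional behaves like $t\,\|f\|_{L^{(n'}}$-type quantities and for large $t$ it saturates at $\|f\|_{L^{n'}}$; the essential content is the behavior as $t\to 0^+$, where one expects $K(f,t)\approx$ a truncated small-Lebesgue norm built from $f_*$ on an interval whose length is governed by a logarithmic change of variables $t\leftrightarrow(1-\Log s)$.

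The key computational step is the change of variables that turns the Peetre integral into the $\GG$-quasinorm. Writing the small-Lebesgue norm in its known rearrangement form $\|g\|_{L^{(n'}}\approx \inf\{\sum_k \ldots\}$ or, more usefully here, via the Fiorenza--Karadzhov-type expression $\|g\|_{L^{(n'}}\approx \sup_{0<\tau<1}(1-\Log\tau)^{-1/n'}\tau^{1/n'}\bigl(\fint_0^\tau\ldots\bigr)$, one substitutes into $K(f,t)$ and then integrates against $t^{-\theta r}\,dt/t$. After the substitution $t=(1-\Log s)^{-\text{something}}$ (matching the exponent so that $t^{-\theta r}\,dt/t$ becomes, up to constants, $(1-\Log s)^{r\theta/n-1}\,ds/s$), the outer Peetre integral collapses to exactly $\int_0^1 w_1(s)\bigl(\int_0^s f_*^{n'}(\sigma)\,d\sigma\bigr)^{r/n'}\,ds$, which is $\rho(f)^r$ for the space $G(n',r;w_1,1)$. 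I would present this as two inequalities: the $\lesssim$ direction (Peetre norm controls the $\GG$-quasinorm) uses the lower bound on $K(f,t)$ obtained by testing the infimum defining $K$ with a specific truncation $f\chi_{\{f_*>f_*(s)\}}$; the $\gtrsim$ direction uses the matching upper bound on $K(f,t)$ and Hardy's inequality to absorb the nested integral.

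The main obstacle, I expect, is getting the $K$-functional estimate for $(L^{n'},L^{(n'})$ sharp enough at both endpoints of the integration in $t$, because the small Lebesgue space $L^{(n'}$ is not a "nice" rearrangement-invariant space of simple Lorentz type — its norm is an infimum/supremum over a one-parameter family, so $K(f,t)$ does not have a clean closed form and must be handled through the $\esssup$-representation together with careful control of the logarithmic weight near $s=0$ and near $s=1$. A secondary technical point is handling the case $r=\infty$ separately (with the obvious $\sup$ replacing the integral) and checking that the weight $w_1$ indeed satisfies the admissibility conditions c1)--c2) so that $G(n',r;w_1,1)$ is a well-defined quasi-Banach function space; this is routine given the Example on weights in the excerpt, since $w_1$ is of the form $t^{-1}(1-\Log t)^{\gamma}$ with $\gamma=r\theta/n-1$. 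Once the $K$-functional is pinned down, the remaining steps are the change of variables and Hardy's inequality, both of which are standard.
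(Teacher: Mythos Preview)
Your overall architecture matches the paper's exactly: compute the $K$-functional for $(L^{n'},L^{(n'})$, make the logarithmic change of variable, and use Hardy's inequality to collapse the nested integral. That is precisely how the paper proceeds (Theorem~\ref{311} followed by Corollary~\ref{ct}). However, several details in your write-up are wrong and would derail an actual proof if left uncorrected.

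First, the embedding goes the other way: $L^{(n'}\hookrightarrow L^{n'}$, not $L^{n'}\subset L^{(n'}$. The \emph{small} Lebesgue space is the smaller one. Your subsequent description of the asymptotics of $K(f,t)$ is in fact consistent with the correct inclusion, so this may be a slip, but it matters for setting up the decomposition. Relatedly, the small-Lebesgue norm is an integral, not a supremum; the ``$\sup_{0<\tau<1}(1-\Log\tau)^{-1/n'}\ldots$'' expression you wrote is the shape of the \emph{grand} Lebesgue norm.

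Second, the $K$-functional for this couple is not available in \cite{FFGKR}; it is proved here as Theorem~\ref{311}, and its form is
\[
K(f,t;L^{n'},L^{(n'})\approx t\int_{\varphi(t)}^1(1-\Log\sigma)^{-1/n'}\Bigl(\int_0^\sigma f_*^{n'}(x)\,dx\Bigr)^{1/n'}\frac{d\sigma}{\sigma},\qquad \varphi(t)=e^{1-1/t^n}.
\]
This is an integral with a moving lower endpoint, and the change of variable that converts the Peetre integral into the $\GG$-quasinorm is exactly $t=(1-\Log x)^{-1/n}$.

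Third, you have the role of the truncation inverted: plugging a specific decomposition into the infimum defining $K$ gives an \emph{upper} bound on $K$, not a lower one. In the paper the upper bound $K\lesssim K^2$ comes from the truncation $g=(|f|-f_*(\varphi(t)))_+$, $h=f-g$; the lower bound $K^2\lesssim K$ comes from $f_*(s)\le g_*(s/2)+h_*(s/2)$ applied to an arbitrary decomposition. Finally, the statement covers only $r\in[1,\infty)$, so there is no $r=\infty$ case to treat.
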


\begin{corollary}{\bf of Theorem \ref{t1.3}}\ \\
{\it 
For $0<\t<1$, one has
$$(L^{n'},L^{(n'})_{\t,1}
=L^{(n',\t}.$$
}
\end{corollary}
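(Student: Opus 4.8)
\textbf{Proof strategy for Theorem \ref{t1.3}.}
The plan is to identify the $K$-functional for the couple $(L^{n'},L^{(n'})$ and then feed it into the defining integral of the real interpolation space $(\,\cdot\,)_{\theta,r}$. Since $L^{(n'}$ (the small Lebesgue space) is the associate space of the grand Lebesgue space $L^{n')}$, and $L^{n'}\subset L^{(n'}$ continuously, the couple is ordered, so $K(f,t;L^{n'},L^{(n'})\approx \|f\|_{L^{(n'}}$ for $t$ large and behaves like $t\|f\|$ adjusted for small $t$; more precisely I expect a formula of the form
$$K(f,t;L^{n'},L^{(n'})\approx \inf_{0<s<1}\Bigl(\text{something involving }\textstyle\int_0^1 f_*,\ t\Bigr),$$
and the right reference point is the computation already carried out in \cite{FFGKR} for the Grand/Small couple together with the known expression of the norm in $L^{(n'}$ as a supremum (or, via its Lorentz-$\GG$ realization, an integral) over the parameter controlling the $(1-\Log s)$-power. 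So the first step is to either quote or adapt from \cite{FFGKR} the explicit two-sided estimate of $K(f,t;L^{n'},L^{(n'})$ in terms of $f_*$ and the logarithmic weight $(1-\Log\, \cdot\,)$.

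Second, I would insert this $K$-functional estimate into
$$\|f\|_{(L^{n'},L^{(n'})_{\theta,r}}=\Bigl(\int_0^\infty \bigl(t^{-\theta}K(f,t;L^{n'},L^{(n'})\bigr)^r\,\frac{dt}{t}\Bigr)^{1/r},$$
change variables so that the integration parameter $t$ is matched with the logarithmic variable $(1-\Log s)$ (a substitution of the type $t\leftrightarrow (1-\Log s)^{-c}$ for the appropriate exponent $c$ depending on how $L^{(n'}$ sits between $L^1$ and $L^{n'}$ — here $c$ is governed by $1/n$, which is why the exponent $r\theta/n-1$ appears), and reduce the double integral by a Hardy-type inequality / Fubini argument to a single integral of the form $\int_0^1 w_1(t)\bigl(\int_0^t f_*^{n'}\bigr)^{r/n'}\,dt$ with $w_1(t)=t^{-1}(1-\Log t)^{r\theta/n-1}$. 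That this is exactly the $G\Gamma(n',r;w_1,1)$ quasinorm is then immediate from the Property stated in the excerpt (the case $w_2=1$).

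Third, I must check both inequalities: the ``$\lesssim$'' direction uses the upper bound for $K$ and a straightforward monotone rearrangement of the integrals, while the ``$\gtrsim$'' direction uses the lower bound for $K$ obtained by choosing a near-optimal decomposition $f=g+h$ adapted to the level parameter; the standard discretization of the outer integral into dyadic (or rather ``$(1-\Log)$-dyadic'') blocks handles the passage between the continuous $K$-functional integral and the $\GG$-integral. The Corollary then follows by specializing $r=1$ and recognizing $G\Gamma(n',1;w_1,1)$ with $w_1(t)=t^{-1}(1-\Log t)^{\theta/n-1}$ as precisely the Lorentz--Zygmund-type space $L^{(n',\theta}$ from its definition.

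\textbf{Main obstacle.}
The hard part will be pinning down the sharp two-sided estimate of $K(f,t;L^{n'},L^{(n'})$: the small Lebesgue space norm is itself defined by an infimum over decompositions (or equivalently a supremum over a logarithmic parameter), so the $K$-functional for this couple is a ``nested'' extremal quantity, and extracting a clean closed form in terms of $f_*$ requires carefully interchanging the two optimizations and controlling the $(1-\Log)$ weights uniformly. Once that estimate is in hand — and it is essentially available from the machinery of \cite{FFGKR} — the remaining manipulations are routine changes of variables and Hardy inequalities.
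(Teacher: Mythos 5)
Your plan follows essentially the same route as the paper: the paper's Theorem 3.1 computes $K(f,t;L^{n'},L^{(n'})$ by adapting exactly the decomposition of \cite{FFGKR} (namely $g=(|f|-f_*(\varphi(t)))_+$, $h=f-g$ with $\varphi(t)=e^{1-t^{-n}}$), its Corollary 3.2 then inserts this into the interpolation norm via the substitution $t=(1-\Log x)^{-1/n}$ and a Hardy inequality to get $G\Gamma(n',r;w_1,1)$ with $w_1(t)=t^{-1}(1-\Log t)^{\frac{r\theta}{n}-1}$, and the Corollary is exactly your last step, since for $r=1$ the exponent $\frac{\theta}{n}-1=-\frac{\theta}{n'}+\theta-1$ matches the definition of $L^{(n',\theta}$. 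The only slip is your claim that $L^{n'}\subset L^{(n'}$: the ordering of the couple comes from the opposite embedding $L^{(n'}\hookrightarrow L^{n'}$, but this side remark does not affect the argument.
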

 
As in \cite{FFGKR, DGRT}, the proofs of the above results rely on the computation of the $K$-functional, as for the couple $(L^{n',\infty},L^{n'})$, we will show the following 
\begin{theorem}\label{th1.4}\ \\
{\it
The $K$-functional for $(L^{n',\infty},L^{n'})$ is given by, for $t\in]0,1[$, $f\GEQ0$ in $L^{n',\infty}$
$$K_0(f;t)=t\sup\left\{\left(\int_Ef_*^{n'}(\s)d\s\right)^{\frac1{n'}};\quad t^{-n'}=\int_E\dfrac{dx}x\right\}$$
}
\end{theorem}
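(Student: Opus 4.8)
Write $G(x)=x\,f_*(x)^{n'}$ and let $d\mu=dx/x$ on $(0,1)$. Since $\int_A f_*^{n'}\,dx=\int_A G\,d\mu$ and $\int_A dx/x=\mu(A)$, the maximal characterization $\int_0^\tau\Psi^*_\mu(r)\,dr=\sup\{\int_A\Psi\,d\mu:\mu(A)=\tau\}$ (for $\Psi\GEQ0$) shows that the right-hand side of the theorem equals $t\big(\int_0^{t^{-n'}}G^*_\mu(r)\,dr\big)^{1/n'}$, where $G^*_\mu$ is the decreasing rearrangement of $G$ relative to $\mu$; call this $K_0(f;t)$ as in the statement. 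It is convenient to reduce first to decreasing decompositions. The $K$-functional of the couple $(L^{n',\infty},L^{n'})$ depends only on $f_*$, so we may take $\O=(0,1)$, $f=f_*$. Given a competitor $f_*=g+h$ one may assume $0\LEQ g\LEQ f_*$, hence $g_*\LEQ f_*$; replacing $(g,h)$ by $(g_*,\,f_*-g_*)$ keeps $\|g\|_{L^{n',\infty}}$ fixed and, by the $L^{n'}$-contractivity of the decreasing rearrangement ($\|f_*-g_*\|_{L^{n'}}\LEQ\|f_*-g\|_{L^{n'}}$), does not increase the other term. Thus $K(f,t)$ is the infimum of $\|g\|_{L^{n',\infty}}+t\|f_*-g\|_{L^{n'}}$ over \emph{decreasing} $g$ with $0\LEQ g\LEQ f_*$.

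For the lower bound $K_0(f;t)\LEQ K(f,t)$, fix such a $g$, set $h=f_*-g\GEQ0$ and fix $E\subset(0,1)$ with $\int_E dx/x=t^{-n'}$. The triangle inequality in $L^{n'}(E,dx)$ gives $\big(\int_E f_*^{n'}\big)^{1/n'}\LEQ\big(\int_E g^{n'}\big)^{1/n'}+\big(\int_E h^{n'}\big)^{1/n'}$, and $\big(\int_E h^{n'}\big)^{1/n'}\LEQ\|h\|_{L^{n'}}$. Because $g$ is decreasing we have the pointwise bound $g(x)\LEQ\|g\|_{L^{n',\infty}}\,x^{-1/n'}$, so $\int_E g^{n'}\,dx\LEQ\|g\|_{L^{n',\infty}}^{n'}\int_E dx/x=t^{-n'}\|g\|_{L^{n',\infty}}^{n'}$. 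Hence $t\big(\int_E f_*^{n'}\big)^{1/n'}\LEQ\|g\|_{L^{n',\infty}}+t\|h\|_{L^{n'}}$; taking the supremum over admissible $E$ and then the infimum over $g$ yields $K_0(f;t)\LEQ K(f,t)$.

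For the reverse inequality, test with the decreasing pair $g_\lambda=\min\{f_*,(\lambda/x)^{1/n'}\}$, $h_\lambda=(f_*-(\lambda/x)^{1/n'})^+$, $\lambda>0$. Then $\|g_\lambda\|_{L^{n',\infty}}\LEQ\lambda^{1/n'}$ and, using $(a-b)^{n'}\LEQ a^{n'}-b^{n'}$ for $0\LEQ b\LEQ a$, $\|h_\lambda\|_{L^{n'}}^{n'}\LEQ\int_{\{G>\lambda\}}(G-\lambda)\,d\mu$. A short distribution-function computation shows that for $\lambda=G^*_\mu(t^{-n'})$ one has $\int_{\{G>\lambda\}}(G-\lambda)\,d\mu=I-\lambda t^{-n'}$ and $\lambda\LEQ t^{n'}I$, where $I:=\int_0^{t^{-n'}}G^*_\mu$; writing $\lambda=\theta\,t^{n'}I$ with $\theta\in[0,1]$ this gives $\|g_\lambda\|_{L^{n',\infty}}+t\|h_\lambda\|_{L^{n'}}\LEQ tI^{1/n'}\big(\theta^{1/n'}+(1-\theta)^{1/n'}\big)\LEQ 2^{1-1/n'}K_0(f;t)$. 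Optimising $\lambda$ over all values — equivalently, keeping the exact value $\|h_\lambda\|_{L^{n'}}^{n'}=\int_{\{G>\lambda\}}(f_*-(\lambda/x)^{1/n'})^{n'}\,dx$ and minimising the decomposition directly — sharpens the constant; the equality asserted in the theorem is to be read in the customary sense of equivalence of $K$-functionals, which is what the interpolation corollaries use.

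The main obstacle is the reduction to decreasing $g$: the pointwise bound $g(x)\LEQ\|g\|_{L^{n',\infty}}x^{-1/n'}$ that powers the lower bound is false for general $g$ (the weak-$L^{n'}$ quasinorm only controls $g_*$), and it is precisely the $L^{n'}$-contractivity of the decreasing rearrangement that allows one to replace an arbitrary decomposition by a decreasing one at no cost. The remaining non-immediate step is the routine distribution-function identity $\int_{\{G>\lambda\}}(G-\lambda)\,d\mu=\int_0^{t^{-n'}}G^*_\mu-\lambda t^{-n'}$ at $\lambda=G^*_\mu(t^{-n'})$, which uses only that $G^*_\mu$ is $\mu$-equimeasurable with $G$ and decreasing.
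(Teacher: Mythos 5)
Your proposal is correct and follows essentially the paper's own route: the identification of the set-supremum with $t\big(\int_0^{t^{-n'}}G^*_\mu\big)^{1/n'}$ via Hardy--Littlewood, and the near-optimal decomposition $g_\lambda=\min\{f_*,(\lambda/x)^{1/n'}\}$, $h_\lambda=\big(f_*-(\lambda/x)^{1/n'}\big)^+$ at $\lambda=G^*_\mu(t^{-n'})$ coincide with the paper's choice $f_1,f_2$ built from the set $A_t=\{\psi>\psi_{*,\nu}(t^{-p})\}$ with $\psi(s)=s^{1/p}f_*(s)$. The only (harmless) deviation is in the easy inequality, where you reduce to decreasing decompositions via the $L^{n'}$-contractivity of the rearrangement instead of the paper's pointwise bound $f_*(s)\le g_*(s/2)+h_*(s/2)$; both yield $K_0\lesssim K$, and, like the paper, you correctly read the stated equality as an equivalence of $K$-functionals.
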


\begin{remark}\ \\
{\it
Setting $\DST d\nu=\dfrac{dx}x,\ |E|_\nu=\int_E d\nu,\quad f_{*,\nu}$ the decreasing rearrangement of a  nonnegative function $f$ with respect to the measure $\nu$, then we can write the preceding theorem as :
}
\end{remark}

\begin{theorem}\label{t1.4b}\ \\
{\it
The $K$-functional for the couple $(L^{p,\infty},L^p)$ is given, for $f\GEQ0$ in $L^p+L^{p,\infty},\ t>0$
$$K_0(f;t)=t\left(\int_0^{t^{-p}}\Big(\psi(s)\Big)^p_{*,\nu}(x)dx\right)^{\frac1p}.$$
Here $1\LEQ p<+\infty$, $\psi(s)=s^{\frac1p}f_*(s),\ s\in(0,1)$.
}
\end{theorem}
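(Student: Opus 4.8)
The plan is to compute the $K$-functional for the couple $(L^{p,\infty},L^p)$ directly from its definition
$$K_0(f;t)=\inf\Big\{\|f_0\|_{L^{p,\infty}}+t\|f_1\|_{L^p}\ :\ f=f_0+f_1\Big\}.$$
First I would reduce to the scalar level: since all the norms involved depend only on the decreasing rearrangement $f_*$, and since $K$-functionals behave well under rearrangement (this is the Calder\'on–Mityagin property of the couple $(L^{p,\infty},L^p)$ over a nonatomic measure space, equivalently over $((0,1),dx)$ or even $((0,\infty),dx)$), I may assume $f=f_*$ and that the competing decompositions are also taken to be decreasing. The optimal splitting at level $t$ is the usual truncation: write $f_*=\min(f_*,\lambda)+(f_*-\lambda)_+$ for a threshold $\lambda=\lambda(t)$ to be optimized, so that $f_1=(f_*-\lambda)_+$ carries the $L^p$-part and $f_0=\min(f_*,\lambda)$ the $L^{p,\infty}$-part. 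This is the standard recipe; the content is in identifying the right $\lambda$ and recognizing the resulting expression.

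Next I would rewrite everything in terms of the change of variables suggested in the Remark preceding the statement, $d\nu=dx/x$ on $(0,1)$, with $\psi(s)=s^{1/p}f_*(s)$. The point is that $\|g\|_{L^{p,\infty}}^p=\sup_{s}s\,g_*(s)^p=\sup_s\psi_g(s)^p$ where $\psi_g(s)=s^{1/p}g_*(s)$, so the $L^{p,\infty}$-quasinorm of the truncated piece is governed by $\sup_s s\min(f_*(s),\lambda)^p$; and $\|f_1\|_{L^p}^p=\int_0^1(f_*-\lambda)_+^p\,ds$, which after the substitution and integration by parts becomes an integral of $\psi^p$ against $d\nu$ over the set where $\psi^p>$ (something). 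Carrying this out, the infimum over $\lambda$ collapses the two terms and produces exactly
$$K_0(f;t)=t\Big(\int_0^{t^{-p}}(\psi^p)_{*,\nu}(x)\,dx\Big)^{1/p},$$
where $(\psi^p)_{*,\nu}$ is the decreasing rearrangement of $\psi^p$ with respect to $\nu$, and the level $t^{-p}$ appears because $|E|_\nu$ is measured in the $\nu$-metric and the scaling of $\|\cdot\|_{L^{p,\infty}}$ versus $t\|\cdot\|_{L^p}$ matches $s\leftrightarrow t^{-p}$. Theorem \ref{th1.4} is then the case $p=n'$, with $f\GEQ0$ in $L^{n',\infty}$, rewritten set-theoretically: the condition $t^{-n'}=\int_E dx/x$ is precisely $|E|_\nu=t^{-n'}$, and $\sup$ over such $E$ of $(\int_E f_*^{n'})^{1/n'}$ is the $L^1_\nu$-norm of the largest $\nu$-level set, i.e. $\int_0^{t^{-n'}}(f_*^{n'})_{*,\nu}\,dx$ after noting $f_*^{n'}=\psi^{n'}/s$ is already comparably monotone... more carefully, $\int_E f_*^{n'}\,ds=\int_E \psi^{n'}(s)\,d\nu(s)$, so the sup over $\nu$-measure-$t^{-n'}$ sets $E$ of this integral is exactly $\int_0^{t^{-n'}}(\psi^{n'})_{*,\nu}\,dx$, giving the equivalence of the two formulations.

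I would organize the proof in two halves. For the upper bound $K_0(f;t)\le t(\int_0^{t^{-p}}(\psi^p)_{*,\nu})^{1/p}$, I exhibit the truncation decomposition with the $\lambda$ chosen so that $\{s:\psi^p(s)>$ threshold$\}$ has $\nu$-measure $t^{-p}$ and estimate the two norms explicitly. For the lower bound, I take an arbitrary decomposition $f=f_0+f_1$, pass to rearrangements (using $f_*(s)\le f_{0*}(s/2)+f_{1*}(s/2)$, or more precisely $f_*(s_1+s_2)\le f_{0*}(s_1)+f_{1*}(s_2)$), and bound $\int_0^{t^{-p}}(\psi^p)_{*,\nu}$ from above by the corresponding quantities for $f_0$ and $f_1$, the first controlled by $\|f_0\|_{L^{p,\infty}}$ (since on a $\nu$-set of measure $t^{-p}$ the function $s f_{0*}(s)^p\le\|f_0\|_{L^{p,\infty}}^p$) and the second by $t^p\|f_1\|_{L^p}^p$. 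The main obstacle I expect is the bookkeeping in this lower bound: making the superadditivity of rearrangements interact cleanly with the $\nu$-rearrangement and the constant $t^{-p}$ without losing the sharp constant — in particular verifying that the subadditivity splitting costs nothing in the limiting expression (one typically uses that the $K$-functional is unchanged, up to equivalence, but here the claim is an equality, so one must either accept equivalence or argue via the known exact $K$-functional formula $K(f;t;L^1,L^\infty)(s)=\int_0^s f_*$ pulled back through the $\nu$-rearrangement and the power map $\psi\mapsto\psi^p$). I would handle this by reducing, via the substitution $x=s^{\,}$-type change and the map $g\mapsto g^p$, to the classical computation $K(h;\tau;L^1(\nu),L^\infty(\nu))=\int_0^\tau h_{*,\nu}$, which is an exact identity, and then track constants through the reduction.
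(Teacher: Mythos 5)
Your overall architecture (easy lower bound from an arbitrary decomposition via $f_*(s)\le f_{0*}(s/2)+f_{1*}(s/2)$; upper bound from one explicit near-optimal decomposition; identification of $\sup\{\int_E f_*^p\,ds:|E|_\nu=t^{-p}\}$ with $\int_0^{t^{-p}}(\psi^p)_{*,\nu}$ by Hardy--Littlewood) is the same as the paper's. But the key step — the choice of decomposition for the upper bound — is wrong. The ``usual truncation'' $f_0=\min(f_*,\lambda)$, $f_1=(f_*-\lambda)_+$ at a constant level $\lambda$ is the right recipe for couples with $L^\infty$- or $L^1$-type endpoints, not for $(L^{p,\infty},L^p)$: here the near-optimal split must shave $f$ down to the weak-type profile $c\,s^{-1/p}$, i.e.\ truncate $\psi(s)=s^{1/p}f_*(s)$ at the constant level $c=\psi_{*,\nu}(t^{-p})$, so that $f_1(s)=s^{-1/p}\bigl(\psi(s)-\psi_{*,\nu}(t^{-p})\bigr)_+\chi_{A_t}(s)$ with $A_t=\{\psi>\psi_{*,\nu}(t^{-p})\}$ (transported back to $\Omega$ by a measure-preserving map $\sigma$ with $f=f_*\circ\sigma$); this is exactly what the paper does. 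Your sentence about choosing $\lambda$ so that $\{\psi^p>\text{threshold}\}$ has $\nu$-measure $t^{-p}$ conflates the level sets of $f_*$ (which are intervals $(0,a)$) with those of $\psi$ (which are not, since $\psi$ need not be monotone); the two truncations are genuinely different. A concrete counterexample for $p=1$: take $f_*(s)=\max(1,\delta/s)$ on $(0,1)$ with $\delta$ small (or a version cut off near $0$ so that $f\in L^1$). The profile truncation $f_0=\delta/s$, $f_1=(1-\delta/s)_+\chi_{(\delta,1)}$ gives $K(f,t)\lesssim \delta+t$, which matches $t\int_0^{1/t}\psi_{*,\nu}\,dx\approx \delta+t$; but every horizontal truncation with $f_1\ne 0$ has $\|f_1\|_{L^1}$ infinite (or, in the cut-off version, larger by an arbitrarily big factor), while $f_1=0$ costs $\|f\|_{L^{1,\infty}}=1$. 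So the infimum over your family of decompositions is $\approx 1$, and the upper bound $K\lesssim t(\int_0^{t^{-p}}(\psi^p)_{*,\nu})^{1/p}$ cannot be proved this way.

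Two smaller points. First, the reduction ``assume $f=f_*$ and the competing decompositions are decreasing'' should not be justified by a Calder\'on--Mityagin property of $(L^{p,\infty},L^p)$; what is actually used (and all that is needed, up to constants) is the elementary inequality above together with the transfer of a decomposition of $f_*$ back to $f$ via the measure-preserving map, as in the paper. Second, your worry about equality versus equivalence is well founded: the paper proves only $K(f,t;L^{p,\infty},L^p)\approx t\bigl(\int_0^{t^{-p}}(\psi^p)_{*,\nu}(x)\,dx\bigr)^{1/p}$ (the ``$=$'' in the statement defines the comparison quantity $K_0$, shown in Theorem 3.6 to satisfy $K\approx K_p$ and $K_p(f,t)=t(\int_0^{t^{-p}}\psi_{*,\nu}^p)^{1/p}$), and your proposed repair — pulling back the exact identity $K(h,\tau;L^1(\nu),L^\infty(\nu))=\int_0^\tau h_{*,\nu}$ through $f\mapsto \psi^p$ — cannot yield an exact identity either, because that correspondence is nonlinear (it involves a rearrangement and a $p$-th power), so decompositions of $f$ and of $\psi^p$ only correspond up to constants. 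Aim for equivalence with the profile decomposition, and the argument closes exactly as in the paper.
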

From this result, we can recover the following result due to Maligranda and Persson (see \cite{Maligranda-Persson} :

\begin{theorem}\label{t1.5}\ \\
{\it
Let $0<\t<1,\ 1<p<+\infty$. Then
$$(L^{p,\infty},L^p)_{\t,\frac p\t}=L^{p,\frac p\t}.$$
Here $L^{p,\frac p\t}$ is the usual Lorentz space.
}
\end{theorem}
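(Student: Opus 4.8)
The plan is to insert the closed formula for the $K$-functional furnished by Theorem~\ref{t1.4b} into the definition of the real interpolation norm and to recognise the resulting expression, after one change of variables, as a constant times a Hardy average of a decreasing rearrangement.

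First I would fix $f\GEQ0$, set $q=\frac p\theta$, $\psi(s)=s^{1/p}f_*(s)$, $h=\psi^p$, $d\nu=\frac{dx}x$, and abbreviate $F(u)=\int_0^u h_{*,\nu}(x)\,dx$, where $h_{*,\nu}$ is the decreasing rearrangement of $h$ with respect to $\nu$. By Theorem~\ref{t1.4b} one has $K_0(f;t)=t\,F(t^{-p})^{1/p}$, hence
$$\|f\|_{(L^{p,\infty},L^p)_{\theta,q}}^{\,q}=\int_0^\infty\big(t^{-\theta}K_0(f;t)\big)^{q}\frac{dt}t=\int_0^\infty t^{(1-\theta)q}\,F(t^{-p})^{q/p}\,\frac{dt}t .$$
Then I would perform the substitution $u=t^{-p}$; since $q=\frac p\theta$ one checks that $(1-\theta)\frac qp+1=\frac1\theta$ and $\frac qp=\frac1\theta$, so the right-hand side becomes
$$\frac1p\int_0^\infty u^{-1/\theta}\,F(u)^{1/\theta}\,du=\frac1p\int_0^\infty\Big(\frac1u\int_0^u h_{*,\nu}(x)\,dx\Big)^{1/\theta}\,du .$$

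Next I would invoke the classical Hardy inequality: for the exponent $s=\frac1\theta>1$ the Hardy averaging operator is bounded on $L^{s}(0,\infty)$ with norm $\LEQ\frac{s}{s-1}$, while for the \emph{decreasing} function $g=h_{*,\nu}$ the elementary bound $\frac1u\int_0^u g\GEQ g(u)$ gives the reverse inequality with constant $1$. Thus the last integral is comparable, with constants depending only on $\theta$, to $\frac1p\int_0^\infty h_{*,\nu}(u)^{1/\theta}\,du$. Finally, by equimeasurability of $h_{*,\nu}$ and $h$ with respect to $\nu$,
$$\int_0^\infty h_{*,\nu}(u)^{1/\theta}\,du=\int_0^1 h(s)^{1/\theta}\,d\nu(s)=\int_0^1\big(s^{1/p}f_*(s)\big)^{p/\theta}\,\frac{ds}s=\|f\|_{L^{p,p/\theta}}^{\,p/\theta}=\|f\|_{L^{p,q}}^{\,q},$$
and stringing these identities together with the two-sided Hardy estimate yields $\|f\|_{(L^{p,\infty},L^p)_{\theta,p/\theta}}\approx\|f\|_{L^{p,p/\theta}}$, which is the assertion.

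The only step that is not pure bookkeeping is the Hardy comparison; the two places where I would be most careful are the cancellation of powers of $u$ after the substitution $u=t^{-p}$ (it is precisely the choice $q=p/\theta$ that produces the scaling-invariant integrand $(\frac1u\int_0^u h_{*,\nu})^{1/\theta}$), and the remark that, because $\nu$ has infinite total mass, $h_{*,\nu}$ is a genuinely decreasing function on all of $(0,\infty)$, so that both the Hardy inequality and the equimeasurability identity are applied on the half-line.
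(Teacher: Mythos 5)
Your proof is correct and follows essentially the same route as the paper's: both insert the $K$-functional formula of Theorem \ref{t1.4b} into the interpolation norm, perform the substitution $u=t^{-p}$, and then use the two-sided Hardy comparison (boundedness of the averaging operator on $L^{1/\theta}$ for the upper bound, monotonicity of the rearrangement for the lower one) together with equimeasurability to identify the result with the $L^{p,\,p/\theta}$ norm. No gaps; your bookkeeping of exponents and the remark about $\nu$ having infinite mass match the paper's computation.
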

Applying Theorem \ref{th1.4} with real interpolation method of Marcinkiewiecz, we then deduce the following partial answer for very weak solution :
\begin{proposition}\label{pf}\ \\
For $0<\a\LEQ1$, let $u$ be the solution of (\ref{eqeq1}). Then  one has  a constant $c>0$ such  that
$$
\int_0^1 t^{-\a}\sup_{\{E:|E|_\nu=t^{-n'}\}}\left(\int_Eu_*^{n'}(x)dx\right)^{\frac1{n'}}dt
\LEQ c|f|_{L^1(\O;\d(1+|\Log\d|)^{\frac\a{n'}})}. $$
\end{proposition}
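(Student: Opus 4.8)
The strategy is to combine three ingredients: the known a priori bound \eqref{eqeq3}, namely $|u|_{L^{n',\infty}(\O)}\LEQ K_1|f|_{L^1(\O;\d)}$, together with the estimate from Theorem \ref{tt1}(2), $|u|_{L^{n'}(\O)}\LEQ c|f|_{L^1(\O;\d(1+|\Log\d|)^{1/n'})}$ — both read through the lens of the operator $f\mapsto u$ being bounded, and the $K$-functional formula of Theorem \ref{th1.4}. First I would apply the real (Marcinkiewicz) interpolation method of $K$-functional type to the linear map $T:f\mapsto u$, which sends $L^1(\O;\d)$ into $L^{n',\infty}(\O)$ and $L^1(\O;\d(1+|\Log\d|)^{1/n'})$ into $L^{n'}(\O)$. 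The source scale here is a pair of weighted $L^1$ spaces with a logarithmic weight; the interpolation space at parameter governed by $\a$ between $L^1(\O;\d)$ and $L^1(\O;\d(1+|\Log\d|)^{1/n'})$ should be $L^1(\O;\d(1+|\Log\d|)^{\a/n'})$, which is exactly the space appearing on the right-hand side. This is a standard computation of $K$-functionals for weighted $L^1$ couples and follows, for instance, from the Holmstedt-type formulas in \cite{Bennett-Sharpley, Bergh-Lofstrom}.

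The second step is to rewrite the target-side $K$-functional using Theorem \ref{th1.4}. By the interpolation inequality $K(Tf,t;L^{n',\infty},L^{n'})\lesssim K(f,\,\omega(t);\,L^1(\O;\d),\,L^1(\O;\d(1+|\Log\d|)^{1/n'}))$ for an appropriate change of variable $\omega$, and then integrating $t^{-\t}$ times this against $dt/t$ with the exponent $\t$ chosen so that the left side becomes the $(\cdot,1)$-norm, one gets
$$\int_0^1 t^{-\t}K_0(u;t)\,\frac{dt}{t}\LEQ c\,|f|_{L^1(\O;\d(1+|\Log\d|)^{\a/n'})},$$
where $K_0(u;t)=t\sup\{(\int_E u_*^{n'}(\s)d\s)^{1/n'}:\ t^{-n'}=\int_E dx/x\}$ by Theorem \ref{th1.4}. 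Substituting this expression for $K_0(u;t)$, the factor $t$ from the $K$-functional combines with $t^{-\t}\,dt/t$, and after matching exponents (the right choice is $\t=\a$, so that $t\cdot t^{-\a-1}=t^{-\a}$ up to the correct reindexing and the $|E|_\nu=t^{-n'}$ constraint with $d\nu=dx/x$) one arrives precisely at
$$\int_0^1 t^{-\a}\sup_{\{E:|E|_\nu=t^{-n'}\}}\Big(\int_E u_*^{n'}(x)dx\Big)^{1/n'}dt\LEQ c\,|f|_{L^1(\O;\d(1+|\Log\d|)^{\a/n'})}.$$

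The main obstacle I anticipate is bookkeeping the change of variables between the $t$ in the target $K$-functional and the parameter in the source couple: the source pair is $(L^1(\O;\d),L^1(\O;\d(1+|\Log\d|)^{1/n'}))$, whose $K$-functional grows only logarithmically, so the Holmstedt reparametrization relating it to the power-type behavior on the $L^{n',\infty}$–$L^{n'}$ side introduces a logarithmic substitution $t\mapsto(1-\Log t)$-type factors that must cancel cleanly against the weight $(1+|\Log\d|)^{\a/n'}$ to leave a pure power $t^{-\a}$ under the integral. Verifying that these logarithmic factors disappear exactly — rather than leaving a spurious $(1-\Log t)^{\gamma}$ with $\gamma\neq0$ — is the delicate point; everything else (applying the $K$-method inequality, invoking Theorem \ref{th1.4}, and computing the weighted-$L^1$ interpolation space) is routine. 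One should also check that $u\GEQ0$ is not actually needed: since $K_0$ is computed for $f\GEQ0$ in Theorem \ref{th1.4}, one applies the estimate to $|u|$ and uses $|u|_*=u_*$.
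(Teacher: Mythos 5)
Your plan coincides with the paper's intended argument: interpolate the linear map $f\mapsto u$ between the endpoint bounds (\ref{eqeq3}) and Theorem \ref{tt1}(2), identify the source space $\big(L^1(\O;\d),L^1(\O;\d(1+|\Log\d|)^{1/n'})\big)_{\a,1}=L^1\big(\O;\d(1+|\Log\d|)^{\a/n'}\big)$, and read off the left-hand side from the $K$-functional formula of Theorem \ref{th1.4} with $\t=\a$ (applied to $|u|$), which is exactly how the paper deduces Proposition \ref{pf}. The only remark is that the ``delicate'' Holmstedt-type reparametrization you anticipate is not actually needed: the $K$-method inequality $K(Tf,t;L^{n',\infty},L^{n'})\lesssim K(f,ct;L^1(\O;\d),L^1(\O;\d(1+|\Log\d|)^{1/n'}))$ involves only a constant rescaling of $t$, and the weighted-$L^1$ identification is an exact Fubini computation, so no logarithmic factors arise that would have to cancel.
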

\ \\
Other consequences of the above interpolation results are the interpolation inequalities, we state few of them.\\
\begin{property}\label{propri1}{\bf (Interpolation inequalities for small and grand Lebesgue spaces)}
{\it
\begin{enumerate}
\item Let $1\GEQ \a>\dfrac1{n'}$  then $\forall\, v\in L^{(n'}$
$$||v||_{L^{n',\infty}}\LEQ c||v||^{1-\a}_{L^{n')}}\,||v||^\a_{L^{(n'}}.$$
\item For any $\a\in]0,1[$, one has
$$||v||_{(L^{n',\infty},L^{(n'})_{\a,1}}\LEQ c||v||^{1-\a}_{L^{n'}}\, ||v||^\a_{L^{(n'}}\qquad\forall\,v\in L^{(n'}.$$
\end{enumerate}
}
\end{property}
\section{\bf \large Notation and Primary results}
For a measurable  function $f:\O\to\R$, we set for $t\GEQ 0$ $$D_f(t)=\hbox{measure }\Big\{ x\in\O:|f(x)|>t\Big\}$$ and 
$f_*$ the decreasing rearrangement of $|f|$, $$f_*(s)=\inf\Big\{t:D_f(t)\LEQ s\Big\} \hbox{ with }s\in\big(0,|\O|\big),\ |\O|\hbox{ is the measure of }\O,$$
 that we shall assume to be equal to 1 for simplicity.\\
 
 If $A_1$ and $A_2$ are two quantities depending on some parameters, we shall write      
 $$A_1\lesssim A_2\hbox{ if there exists $c>0$ (independent of the parameters) such that $A_1\LEQ c A_2$}$$
$$A_1\simeq A_2 \hbox{ if and only if $A_1\lesssim A_2$ and $A_2\lesssim A_1.$}$$  
We recall also the following definition of interpolation spaces.\\
Let $(X_0,||\cdot||_0),\ (X_1,||\cdot||_1)$ two Banach spaces contained continuously in a Hausdorff topological vector space (that is $(X_0,X_1)$ is a compatible couple).\\
For $g\in X_0+X_1,\ t>0$ one defines the so called $K$ functional $K(g,t;X_0,X_1)\dot=K(g,t)$ by setting
\begin{equation}\label{eq1}
K(g,t)=\inf_{g=g_0+g_1}\big(||g_0||_0+t||g_1||_1\big).
\end{equation}
For $0\LEQ\theta\LEQ1,\ 1\LEQ p\LEQ+\infty,\ \a\in\R$ we shall consider
$$(X_0,X_1)_{\theta,p;\a}=\Big\{g\in X_0+X_1,\ ||g||_{\theta,p;\a}=||t^{-\theta-\frac1p}\big(1-\Log t\big)^\a K(g,t)||_{L^p(0,1)}\hbox{ is finite}
\Big\}.$$
Here $||\cdot||_V$ denotes the norm in a Banach space $V$. The weighted Lebesgue space $L^p(0,1;\omega)$, $0< p\LEQ+\infty$ is endowed with the usual norm or quasi norm, where $\omega$ is a weight function on $(0,1)$.\\
Our definition of the  interpolation space is different from the usual one (see \cite{Bennett-Sharpley, Tartar}) since we restrict the norms on the interval $(0,1)$.\\ If  we consider ordered couple, i.e.  $X_1\hookrightarrow X_0$ and $\a=0,$ 
 $$(X_0,X_1)_{\theta,p;0}=(X_0,X_1)_{\theta,p}$$ is the interpolation space as it is defined by  J. Peetre (see \cite{Bennett-Sharpley, Tartar, Bergh-Lofstrom}). \\

\subsection{Some remarkable $\GG$-spaces}\ \\
In this paragraph, we want to prove among other that $\GG$-spaces cover many well-known spaces.
\begin{proposition}\label{p20}\ \\
{\it 
Consider the classical Lorentz space $\DST\Lambda^p(w_2)$. Then it is equal to the set \\ $\DST\Big\{f:\O\to\R\hbox{ measurable : }\left(\int_0^1f_*^p(\s)w_2(\s)d\s)\right)^{\frac1p}=||f||_{\Lambda^p(w_2)}<+\infty\Big\}.$\\
If $w_1$ and $w_2$ are integrable weights on $(0,1)$ and $w_2$ satisfies c1) then
$$\GG(p,m;w_1,w_2)=\Lambda^p(w_2).$$
}
\end{proposition}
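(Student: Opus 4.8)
The plan is to establish the two inclusions $\GG(p,m;w_1,w_2)\subseteq\Lambda^p(w_2)$ and $\Lambda^p(w_2)\subseteq\GG(p,m;w_1,w_2)$ separately, identifying along the way that the two defining quasinorms are equivalent. Recall that $f\in\GG(p,m;w_1,w_2)$ means that the function $t\mapsto F(t):=\int_0^t f_*^p(\sigma)w_2(\sigma)\,d\sigma$ lies in $L^{m/p}(0,1;w_1)$, i.e.\ $\rho(f)=\big[\int_0^1 w_1(t)F(t)^{m/p}\,dt\big]^{1/m}<\infty$ (with the usual sup-modification when $m=\infty$). The key structural observations are that $F$ is nondecreasing and that, under the hypotheses, $F$ is bounded: indeed $F(t)\le F(1)=\int_0^1 f_*^p w_2=\|f\|_{\Lambda^p(w_2)}^p$, and this supremum is attained in the limit $t\to1^-$.

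First I would prove $\GG\subseteq\Lambda^p(w_2)$. Since $F$ is nondecreasing, for any fixed $t_0\in(0,1)$ we have $F(t)\ge F(t_0)$ for all $t\ge t_0$, hence
\[
\rho(f)^m=\int_0^1 w_1(t)F(t)^{m/p}\,dt\ \ge\ F(t_0)^{m/p}\int_{t_0}^1 w_1(t)\,dt .
\]
Because $w_1$ is a weight (positive a.e.) and integrable, $\int_{t_0}^1 w_1>0$ for, say, $t_0=1/2$, so $F(1/2)\lesssim\rho(f)^p$. To upgrade this to control of $F(1)=\|f\|_{\Lambda^p(w_2)}^p$ I would use the doubling/growth structure: since $w_2$ satisfies c1) and is integrable, $\int_{1/2}^1 f_*^p w_2\le f_*(1/2)^p\int_{1/2}^1 w_2$ is finite once $f_*(1/2)<\infty$, and $f_*(1/2)^p\cdot\tfrac12 \le \int_0^{1/2} f_*^p \le (\text{const})F(1/2)$ after absorbing the factor $w_2$, which is bounded below on $(0,1/2)$ by its value near $1/2$ up to the c1) constant. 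Chaining these estimates gives $\|f\|_{\Lambda^p(w_2)}^p=F(1)\lesssim\rho(f)^p$, which also shows $f\in\Lambda^p(w_2)$ with a quasinorm bound.

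Conversely, for $\Lambda^p(w_2)\subseteq\GG$, suppose $\|f\|_{\Lambda^p(w_2)}<\infty$. Then $F(t)\le F(1)=\|f\|_{\Lambda^p(w_2)}^p$ for every $t$, so
\[
\rho(f)^m=\int_0^1 w_1(t)F(t)^{m/p}\,dt\ \le\ \|f\|_{\Lambda^p(w_2)}^m\int_0^1 w_1(t)\,dt ,
\]
and the right-hand side is finite precisely because $w_1$ is an integrable weight; the case $m=\infty$ is immediate since $\esssup_t w_1(t)^{1/m}$ is replaced by the appropriate sup and $F\le F(1)$ still gives the bound. Hence $f\in\GG(p,m;w_1,w_2)$ with $\rho(f)\lesssim\|f\|_{\Lambda^p(w_2)}$. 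Combining the two directions yields equality of the spaces and equivalence of quasinorms.

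The main obstacle, and the only place the hypotheses are used in an essential way rather than trivially, is the first inclusion: passing from a lower bound on $F$ at an interior point $t_0=1/2$ to a bound on $F(1)$, i.e.\ controlling the ``tail'' $\int_{1/2}^1 f_*^p w_2$. This is where integrability of $w_2$ together with condition c1) (which prevents $w_2$ from being too small near the right endpoint relative to its interior values, via the doubling inequality $w_2(2t)\le K_{12}w_2(t)$) must be invoked carefully; the monotonicity of $f_*$ and of $F$ does the rest. The role of $w_1$ being a genuine weight (strictly positive, not merely nonnegative) is needed so that $\int_{1/2}^1 w_1>0$ and the interior lower bound on $F$ is non-degenerate.
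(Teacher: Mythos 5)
Your easy inclusion $\Lambda^p(w_2)\subseteq \GG(p,m;w_1,w_2)$ and the first half of the converse (using $\int_{1/2}^1 w_1>0$ to get $F(1/2)\lesssim\rho(f)^p$, then bounding the tail by $f_*(1/2)^p\int_{1/2}^1 w_2$) follow the paper's own scheme. But there is a genuine gap at the decisive step: you pass from $f_*(1/2)^p\cdot\tfrac12\le\int_0^{1/2}f_*^p$ to $\int_0^{1/2}f_*^p\lesssim F(1/2)=\int_0^{1/2}f_*^pw_2$ by claiming that the doubling inequality in c1) makes $w_2$ bounded below on $(0,1/2)$. That is false: $w_2(2t)\le K_{12}w_2(t)$ only prevents $w_2$ from \emph{growing} too fast as $t$ increases; iterating it downwards gives $w_2(t)\ge K_{12}^{-n}w_2(2^nt)$ with $n\approx\log_2(1/t)$, so the constant degenerates as $t\to0$. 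Concretely, $w_2(t)=t^{\alpha}$ with $0<\alpha<p-1$ satisfies every hypothesis of the proposition (it is integrable, doubling with $K_{12}=2^{\alpha}$, and $L^p(0,1;w_2)\hookrightarrow L^1(0,1)$ by H\"older since $\alpha p'/p<1$), yet it tends to $0$ at the origin, so your pointwise absorption of the weight fails and the chain of estimates breaks exactly where you say the hypotheses are used ``in an essential way''.

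The repair is the other half of condition c1), which you never invoke: the continuous embedding $L^p(0,1;w_2)\hookrightarrow L^1(0,1)$. This is what the paper uses. From monotonicity, $f_*(1/2)\le 2\int_0^{1/2}f_*\,d\sigma=2\,\|f_*\chi_{(0,1/2)}\|_{L^1}$, and the embedding then gives $\|f_*\chi_{(0,1/2)}\|_{L^1}\lesssim\big(\int_0^{1/2}f_*^p w_2\,d\sigma\big)^{1/p}=F(1/2)^{1/p}\lesssim\rho(f)$; combined with $\int_{1/2}^1 f_*^pw_2\le f_*(1/2)^p\|w_2\|_{L^1}$ this yields $\|f\|_{\Lambda^p(w_2)}\lesssim\rho(f)$, which is precisely the paper's estimates (2.2)--(2.5). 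With that substitution your argument becomes essentially the paper's proof; as written, the pointwise lower bound on $w_2$ is an unjustified (and in general false) step.
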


{\bf Proof}\\
If $v\in\Lambda^p(w_2)$ then $\DST\rho(v)\LEQ||v||_{\Lambda^p(w_2)}\left[\int_0^1w_1(t)dt\right]^\frac1m<+\infty$.\\
Conversely, let $v$ be such that $\rho(v)<+\infty$. We have for some $a>0$, $\DST\int_a^1 w_1(t)dt>~0$. Then for all $t\GEQ a$
$$\left(\int_0^af_*^p(\s)w_2(\s)d\s)\right)^{\frac mp}\LEQ \left(\int_0^t f_*^p(\s)w_2(\s)d\s\right)^{\frac mp},$$
from which we derive after multiplying  by $w_1(t)$ and integrating from $a$ to 1,  
\begin{equation}\label{eq51}
\left(\int_0^af_*^p(\s)w_2(\s)d\s\right)^{\frac1p}\LEQ\dfrac{\rho(v)}{\left[\int_a^1 w_1(t)dt\right]^{\frac1m}}\lesssim\rho(v)<+\infty.
\end{equation}
Between $(a,1)$, we have :
\begin{equation}\label{e60}\left(\int_a^1 f_*^p(\s)w_2(\s)d\s\right)^{\frac1p}\LEQ f_*(a)||w_2||^{\frac1p}_{L^1}\lesssim\int_0^af_*(\s)d\s=\int_0^1f_*(\s)\chi_{[0,a]}(\s)d\s.\end{equation}
The condition c1) implies
\begin{equation}\label{e61}
\int_0^af_*(\s)d\s\lesssim\left(\int_0^1(f_*\chi_{[0,a]})^p(\s)w_2(\s)d\s\right)^{\frac1p}.\end{equation}
So that relations (\ref{eq51}) to (\ref{e61}) imply
\begin{equation}\label{eq52}
\left(\int_a^1f_*^p(\s)w_2(\s)d\s\right)^{\frac1p}\lesssim\left(\int_0^af_*^p(\s)w_2(\s)d\s\right)^{\frac1p}\lesssim\rho(v)<+\infty.
\end{equation}
This shows $$||f||_{\Lambda^p(w_2)}\lesssim\rho(v).$$ 
\ \HF
Next we want to focus in a special case :
\begin{proposition}\label{p21}\ \\
{\it
Assume that $w_1(t)=t^{-1}(1-\Log t)^\g,$ $ w_2(t)=(1-\Log t)^\b,$ $(\g,\b)\in\R^2,$ $ m\in[1,+\infty[,$ $ p\in[1,+\infty[$.
\begin{enumerate}
\item  If $\g<-1$ then $\GG(p,m;w_1,w_2)=\Lambda^p(w_2)$.
\item If $\g>-1$ and $\g+\b\dfrac mp+1\GEQ0$ then
$$\GG(p,m;w_1,w_2)=\GG(p,m;\OV{w_1},1),\ \OV{w_1}(t)=t^{-1}(1-\Log t)^{\g+\b\frac mp}.$$
\end{enumerate}
}
\end{proposition}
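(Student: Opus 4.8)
The plan is to deduce case (1) directly from Proposition \ref{p20}, and to reduce case (2), after a logarithmic change of variables, to a pair of power--weighted Hardy inequalities. For case (1): when $\g<-1$ the weight $w_1(t)=t^{-1}(1-\Log t)^\g$ is integrable on $(0,1)$, since the substitution $v=1-\Log t$ turns $\int_0^1w_1(t)\,dt$ into $\int_1^{+\infty}v^\g\,dv<+\infty$; the weight $w_2(t)=(1-\Log t)^\b$ is integrable on $(0,1)$ for every $\b\in\R$ (it is bounded near $t=1$ and of logarithmic growth near $t=0$) and satisfies c1) by the Example; and $(w_1,w_2)$ is admissible (c1) is the condition on $w_2$; c2) holds because $\int_0^tw_2\LEQ\int_0^1w_2<+\infty$ and $w_1\in L^1(0,1)$; and $L^p(0,1;w_2)\hookrightarrow L^1(0,1)$ by H\"older's inequality, since $w_2^{-p'/p}$ is again a logarithmic power). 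Proposition \ref{p20} then yields $\GG(p,m;w_1,w_2)=\Lambda^p(w_2)$.

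For case (2) it suffices to prove that the two defining quasinorms are equivalent, $\rho_{(w_1,w_2)}(f)\simeq\rho_{(\OV{w_1},1)}(f)$ for every $f$ (one may take $f=f_*$; both couples are admissible, as above). In both quasinorms I would make the change of variables $t=e^{1-\tau}$, $\s=e^{1-u}$: it sends $1-\Log t\mapsto\tau$ and $\frac{dt}t\mapsto -d\tau$, and, with $\f(u):=f_*^p(e^{1-u})e^{1-u}\GEQ0$, it sends $\int_0^tf_*^p(\s)w_2(\s)\,d\s\mapsto\int_\tau^{+\infty}\f(u)u^\b\,du$. Thus $\rho_{(w_1,w_2)}(f)^m=I_1$ and $\rho_{(\OV{w_1},1)}(f)^m=I_2$, where
$$I_1=\int_1^{+\infty}\tau^\g\Big(\int_\tau^{+\infty}\f(u)u^\b\,du\Big)^{\frac mp}d\tau,\qquad I_2=\int_1^{+\infty}\tau^{\g+\b\frac mp}\Big(\int_\tau^{+\infty}\f(u)\,du\Big)^{\frac mp}d\tau,$$
so the claim reduces to $I_1\simeq I_2$ for every $\f\GEQ0$ (the value $+\infty$ being allowed), which then transfers back. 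One inequality is free by monotonicity of $u\mapsto u^\b$ on $[1,+\infty)$: $I_1\GEQ I_2$ if $\b\GEQ0$ and $I_1\LEQ I_2$ if $\b\LEQ0$.

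For the remaining inequality I would integrate by parts in the inner integral and then invoke a weighted Hardy inequality. If $\b\GEQ0$, with $\Phi(\tau)=\int_\tau^{+\infty}\f$ one has $\int_\tau^{+\infty}\f(u)u^\b\,du=\tau^\b\Phi(\tau)+\b\int_\tau^{+\infty}\Phi(u)u^{\b-1}\,du$ (nonnegative summands; the boundary term at $+\infty$ is legitimate, after a truncation, because $u^\b\Phi(u)\LEQ\int_u^{+\infty}\f(r)r^\b\,dr\to0$); the first summand reproduces $I_2$, and $I_1\lesssim I_2$ reduces to
$$\int_1^{+\infty}\tau^\g\Big(\int_\tau^{+\infty}\Phi(u)u^{\b-1}\,du\Big)^{\frac mp}d\tau\lesssim\int_1^{+\infty}\tau^{\g+\b\frac mp}\Phi(\tau)^{\frac mp}\,d\tau.$$
If $\b\LEQ0$, the same device applied to $\Psi(\tau)=\int_\tau^{+\infty}\f(u)u^\b\,du$ gives $\Phi(\tau)=\tau^{-\b}\Psi(\tau)+(-\b)\int_\tau^{+\infty}\Psi(u)u^{-\b-1}\,du$ (again nonnegative summands) and reduces $I_2\lesssim I_1$ to the companion inequality obtained by replacing $\b-1$ with $-\b-1$ and swapping the weights $\tau^\g$ and $\tau^{\g+\b m/p}$. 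After the substitution $h(u)=u^{\b-1}\Phi(u)$ (resp. $h(u)=u^{-\b-1}\Psi(u)$) each of these is a standard power--weighted Hardy inequality for the operator $h\mapsto\int_\tau^{+\infty}h$, and its Muckenhoupt characterising condition reduces to elementary estimates such as $\int_1^rs^\g\,ds\simeq r^{\g+1}$ and $\int_1^rs^{\g+\b m/p}\,ds\simeq r^{\g+\b m/p+1}$; these are valid exactly under $\g>-1$ (needed in both cases) and $\g+\b\frac mp+1>0$ (needed only when $\b<0$, and automatic when $\b\GEQ0$).

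The hard part will be this last weighted Hardy inequality at the endpoint $\g+\b\frac mp+1=0$ (which can occur only for $\b<0$): there $\int_1^rs^{\g+\b m/p}\,ds\simeq\Log r$, the dual integral in the Muckenhoupt condition just fails to converge, a logarithmic factor is lost, and the power--weight reasoning no longer applies verbatim, so this borderline has to be treated separately — exploiting the monotonicity of the tail functions $\Phi$ and $\Psi$ and carefully tracking the logarithmic corrections. (For $m/p<1$ one should moreover use the variant of the Hardy inequality valid below exponent $1$, or reduce once more to monotone functions; this is technical but causes no real obstruction.) Away from this endpoint the argument is routine.
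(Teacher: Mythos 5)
Your case (1) is the same argument as the paper's (integrability of $w_1$ when $\g<-1$ plus Proposition~\ref{p20}). For case (2) you take a genuinely different route: the paper uses no Hardy inequality at all, but the discretization Lemma~\ref{l60} (from \cite{FFGKR}) applied to $H=f_*^p(1-\Log\cdot)^\b$ with the lacunary points $t_k=2^{1-2^k}$; both quasinorms become block sums, on each block $(1-\Log x)^\b\approx 2^{\b k}$, and the weight swap follows at once, uniformly in $q=m/p>0$. Your substitution $t=e^{1-\tau}$, integration by parts and power-weight Hardy inequalities accomplish the same when $\g>-1$, $\g+\b\frac mp+1>0$ \emph{and} $m\GEQ p$. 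For $m<p$, however, the Hardy inequality you invoke is false for general integrands (take $h=\eps^{-1}\chi_{[R,R+\eps]}$: the left-hand side stays $\approx R^{\g+1}$ while the right-hand side is $O(\eps^{1-m/p})\to0$), so the parenthetical ``reduce once more to monotone functions'' is not optional: you must use that $\Phi$ and $\Psi$ are decreasing tails (a dyadic block estimate then closes it) — exactly the structure the paper's lemma packages. That part is fixable, but as written it is a gap for $m<p$.

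The genuine gap is the borderline $\g+\b\frac mp+1=0$ (necessarily $\b<0$), which the statement includes and which you explicitly leave open. It cannot be closed along your lines, because the equivalence $I_1\simeq I_2$ you aim at fails there even for admissible $\f$ coming from a decreasing $f_*$: with $q=m/p$ and $\g+\b q=-1$, take $f_*^p(\s)=\s^{-1}(1-\Log\s)^{-\b}$ for $e^{1-2R}\LEQ\s\LEQ e^{1-R}$, constant on $(0,e^{1-2R})$ and $0$ above $e^{1-R}$; in your variables $\f(u)\approx u^{-\b}\chi_{[R,2R]}(u)$ up to an exponentially small tail, so $\Psi(\tau)\approx R$ and $\Phi(\tau)\approx R^{1-\b}$ for $\tau\LEQ R$, whence $I_1\approx R^{q}R^{\g+1}=R^{(1-\b)q}$ while $I_2\gtrsim R^{(1-\b)q}\Log R$. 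The constants thus blow up logarithmically, and superposing such bumps at lacunary scales $R_j=4^j$ with suitable coefficients yields $I_1<+\infty$ with $I_2=+\infty$, so no amount of ``tracking logarithmic corrections'' will rescue the endpoint. You are in good company: the paper's own proof also covers only the strict case, since its final application of Lemma~\ref{l60} is with exponent $(\g+1)\frac pm+\b$, which the lemma requires to be strictly positive, i.e. $\g+\b\frac mp+1>0$ (and Corollary~\ref{c2} is indeed stated with the strict inequality). So in the open range your plan, with the monotone-tail fix for $m<p$, gives a correct alternative proof; the borderline case asserted in the Proposition is proved by neither argument, and the example above shows the two quasinorms are not equivalent there.
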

{\bf Proof}\ \\
For the first statement, we observe that if $\g+1<0$, $\DST\int_0^1(1-\Log t)^\g\dfrac{dt }t$ is finite. Then applying Proposition \ref{p20} we derive the first result.\\
For the case $\g+1>0$, we shall need the following lemma whose proof is in~\cite{FFGKR}:
\begin{lemma}\label{l60}\ \\
{\it 
Let $t_k=2^{1-2^k},\ k\in\N,\ \lambda>0,\ q>0,\ H$ a nonnegative locally integrable function on $(0,1)$ satisfying
$$\int_0^1H(x)dx\lesssim\int_0^{\frac12}H(x)dx.$$ Then
\begin{enumerate}
\item $2^k\approx 1-\Log x$, $x\in[ t_{k+1},t_k]$.
\item
 \begin{eqnarray*}
\DST\int_0^1\left[(1-\Log t)^\lambda\int_0^t H(x)dx\right]^q\dfrac{dt}{(1-\Log t)t}
&\approx&\sum_{k\in\N}\left(\int_0^{t_k} H(x)dx\right)^q2^{\lambda kq}\\
&\approx&\sum_{k\in\N}\left(2^{\lambda k}\int_{t_{k+1}}^{t_k}H(x)dx\right)^q.
\end{eqnarray*}
\end{enumerate}
}
\end{lemma}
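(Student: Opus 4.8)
For part (1) the argument is a direct computation: from $\Log t_k=(1-2^k)\Log 2$ one gets $1-\Log t_k=(1-\Log 2)+2^k\Log 2$, and since $t\mapsto 1-\Log t$ is decreasing, for $t\in[t_{k+1},t_k]$ the value $1-\Log t$ lies between $1-\Log t_k$ and $1-\Log t_{k+1}$; both of these are comparable to $2^k$ with constants independent of $k$ (indeed $\Log 2\cdot 2^k\LEQ 1-\Log t\LEQ(1+2\Log 2)2^k$). For part (2) the plan is first to discretize the integral along the points $t_k$ so as to produce the middle series, and then to pass from that series to the last one by a discrete Hardy inequality; part (1) is the basic tool throughout.

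\emph{Discretization.} Decomposing the interval of integration along the $t_k$, on $(t_{k+1},t_k]$ the integrand equals $(1-\Log t)^{\lambda q-1}\big(\int_0^t H\big)^q\,t^{-1}$, and by part (1) one has $(1-\Log t)^{\lambda q-1}\approx 2^{(\lambda q-1)k}$ (here $\lambda>0$ and $q>0$ enter), while $\int_{t_{k+1}}^{t_k}\frac{dt}{t}=\Log(t_k/t_{k+1})=2^k\Log 2$; since $t\mapsto\int_0^t H$ is nondecreasing, bounding it above by its value at $t_k$ and below by its value at $t_{k+1}$ yields
\[
\int_{t_{k+1}}^{t_k}\Big[(1-\Log t)^\lambda\int_0^t H\Big]^q\frac{dt}{(1-\Log t)t}\ \approx\ 2^{\lambda kq}\Big(\int_0^{\tau_k}H\Big)^q,
\]
with $\tau_k=t_k$ for the upper and $\tau_k=t_{k+1}$ for the lower estimate. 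Summing over $k$, the upper estimate gives the integral $\lesssim\sum_k 2^{\lambda kq}\big(\int_0^{t_k}H\big)^q$; the lower estimate, after an index shift and after adding in the contribution of $t$ near $1$, gives the reverse inequality up to the top term $\big(\int_0^1 H\big)^q$ of the series. This is precisely where the hypothesis $\int_0^1 H\lesssim\int_0^{1/2}H$ is invoked: it bounds that top term by a constant multiple of $\big(\int_0^{1/2}H\big)^q$, which is itself (a multiple of) a term of the series. This establishes the first $\approx$ in (2).

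\emph{Discrete Hardy inequality.} Put $a_k=\int_{t_{k+1}}^{t_k}H\GEQ0$ and $A_k=\int_0^{t_k}H=\sum_{j\geq k}a_j$, so that the middle and last series are $\sum_k 2^{\lambda kq}A_k^q$ and $\sum_k 2^{\lambda kq}a_k^q$. One inequality is trivial since $a_k\LEQ A_k$. For the other, set $b_j=2^{\lambda j}a_j$; then $2^{\lambda k}A_k=\sum_{j\geq k}2^{-\lambda(j-k)}b_j$. If $q\GEQ1$ this is the convolution of $(b_j)\in\ell^q$ with the kernel $(2^{-\lambda m})_{m\geq0}\in\ell^1$, so the discrete Young inequality gives $\big\|(2^{\lambda k}A_k)_k\big\|_{\ell^q}\lesssim\big\|(b_j)_j\big\|_{\ell^q}$; if $0<q<1$ one applies $(\sum x_j)^q\LEQ\sum x_j^q$ to $(2^{\lambda k}A_k)^q$ and then sums the resulting geometric series in the free inner index. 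Both cases use $\lambda>0$, through the convergence of $\sum_{m\geq0}2^{-\lambda\min(1,q)m}$. This gives the second $\approx$ and completes the proof of (2).

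The main obstacle is this last step: the discrete Hardy inequality must be established for every $q>0$, which forces the case distinction $q\GEQ1$ versus $0<q<1$. The discretization itself is routine once the scale $\{t_k\}$ has been fixed, the only subtle point there being the use of the doubling-type hypothesis on $H$ to absorb the contribution of $t$ near the endpoint $1$.
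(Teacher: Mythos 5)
Your proof is correct. Note that the paper does not actually prove Lemma \ref{l60}: it defers the proof to the reference \cite{FFGKR}, whose discretization lemmas (Lemmas 6.1 and 6.3 there) are exactly of the type you reconstruct, so your argument --- the elementary computation $1-\Log t_k=1-\Log 2+2^k\Log 2$ for part (1), then the blockwise estimate on $(t_{k+1},t_k]$ using $\int_{t_{k+1}}^{t_k}\frac{dt}{t}=2^k\Log 2$, absorption of the top term via the hypothesis $\int_0^1H\lesssim\int_0^{1/2}H$, and a discrete Hardy/Young inequality split into the cases $q\GEQ 1$ and $0<q<1$ --- is essentially the intended one, and it is complete and self-contained. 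The only cosmetic point is the phrase about ``adding in the contribution of $t$ near $1$'': with $t_0=1$ the intervals $[t_{k+1},t_k]$ already cover $(0,1]$, and the doubling hypothesis is needed only to control the $k=0$ term $\big(\int_0^1H\big)^q$ of the series (or, if one starts at $k=1$, the piece of the integral over $(1/2,1)$), which you in fact handle correctly.
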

 
We shall apply this Lemma with $\DST H(x)=f_*^p(x)(1-\Log x)^\b$. We have 
$\DST\int_0^1H(x)dx\lesssim\int_0^{\frac12}H(x)dx$ since $f_*^p$ is decreasing and
$\DST\int_0^1(1-\Log t)^\g dt< +\infty\quad\forall\,\g\in\R$. Indeed
\begin{eqnarray*}
\left(\int_{\frac12}^1H(x)dx\right)^{\frac1p}
&\lesssim &f_*\left(\dfrac12\right)\lesssim\int_0^{\frac12}f_*(t)dt\\&\LEQ&\left(\int_0^{\frac12}f_*^p(t)(1-\Log t)^\b dt\right)^{\frac1p}\cdot\left(\int_0^{\frac12}(1-\Log t)^{-\b\frac{p'}p}dt\right)^{\frac1{p'}}\\
&\lesssim&\left(\int_0^{\frac12}H(x)dx\right)^{\frac1p}.\end{eqnarray*}
\\
Applying statement 2. of  this Lemma \ref{l60}, we derive
\begin{eqnarray}
\rho^m(f)&=&\int_0^1(1-\Log t)^\g\left(\int_0^tH(x)dx\right)^{\frac mp}\dfrac{dt}t\NN\\
&=&\int_0^1\left[(1-\Log t)^{(\g+1)\frac pm}\int_0^tH(x)dx\right]^{\frac mp}\dfrac{dt}{(1-\Log t)t}\quad if\ \g+1>0\NN\\
&\approx&\sum_{k\in\N}\left(2^{\lambda k}\int_{t_{k+1}}^{t_k}H(x)dx\right)^q\hbox{ with }\lambda=(\g+1)\dfrac pm,\ q=\dfrac mp\NN\\
\end{eqnarray} 
\begin{eqnarray}
&\approx&\sum_{k\in\N}\left(2^{(\lambda+\b)k}\int_{t_{k+1}}^{t_k}f_*^p(x)dx\right)^q\NN\\
&\approx&\int_0^1\left[(1-\Log t)^{\lambda+\b}\int_0^tf_*^p(x)dx\right]^q\dfrac{dt}{(1-\Log t)t}\NN\\
&\approx&\int_0^1(1-\Log t)^{(\lambda+\b)q-1}\left(\int_0^t f^p_*(x)dx\right)^q\dfrac{dt}t.\label{e70}
\end{eqnarray} 

If $\g>-1,\ \g+\b\dfrac mp+1\GEQ0$, then the equality comes from the definition of $\GG(p,m;\OV{w_1})$.\\ 
This ends of the proof  of Proposition \ref{p21}\HF

\begin{lemma}\label{cvL1} \ \\
Assume that $w_1(t)=t^{-1} (1- \ln t)^\gamma,$ $w_2(t)= (1- \ln t)^\beta,$ $(\gamma, \beta)\in \mathbb{R}^2$, $m \in [1, \infty[,$ $ p \in [1, \infty[.$ If $\gamma> -1$ and $\gamma + \beta\frac{m}{p}+1<0$, then

$$\|f\|^m_{G\Gamma(p,m;w_1,w_2)}\approx\int_{0}^{1}(1-\ln t)^{\gamma+ \beta\frac{m}{p}}\left(\int_{t}^{1}f_\ast(x)^pdx\right)^{m/p}\frac{dt}{t}. $$
\end{lemma}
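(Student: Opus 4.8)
The plan is to follow the same dyadic-discretization strategy used in the proof of Proposition~\ref{p21}, adapting it to the regime $\gamma+\beta\frac mp+1<0$. As before, set $H(x)=f_*^p(x)(1-\ln x)^\beta$. The first thing to check is the hypothesis of Lemma~\ref{l60}, i.e. that $\int_0^1 H(x)\,dx\lesssim\int_0^{1/2}H(x)\,dx$; this is exactly the computation already carried out in the proof of Proposition~\ref{p21} (using that $f_*^p$ is decreasing together with H\"older's inequality on $(0,\tfrac12)$), and it does not depend on the sign of $\gamma+\beta\frac mp+1$, so it carries over verbatim. Then, writing $\lambda=(\gamma+1)\frac pm$ and $q=\frac mp$ and invoking statement~2 of Lemma~\ref{l60} together with the equivalence $2^k\approx 1-\ln x$ on $[t_{k+1},t_k]$, one obtains (just as in the chain of displays ending at \eqref{e70})
\[
\|f\|^m_{G\Gamma(p,m;w_1,w_2)}\approx\sum_{k\in\N}\left(2^{(\lambda+\beta)k}\int_{t_{k+1}}^{t_k}f_*^p(x)\,dx\right)^q .
\]

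The new ingredient is the passage from the \emph{left} tail sum $\sum_k\big(2^{(\lambda+\beta)k}\int_0^{t_k}f_*^p\big)^q$ to a \emph{right} tail expression when the exponent is negative. When $(\lambda+\beta)q-1=\gamma+\beta\frac mp<-1$, equivalently $\lambda+\beta<0$, the geometric weights $2^{(\lambda+\beta)k}$ decay, so the cumulative integral $\int_0^{t_k}f_*^p$ is dominated (up to the summable geometric factor) by the block integrals and one should instead reorganize the sum around $\int_{t_{k+1}}^{1}f_*^p(x)\,dx=\sum_{j\le k}\int_{t_{j+1}}^{t_j}f_*^p$. Concretely, I would prove the two-sided estimate
\[
\sum_{k\in\N}\left(2^{(\lambda+\beta)k}\int_{t_{k+1}}^{t_k}f_*^p\right)^q\approx\sum_{k\in\N}\left(2^{(\lambda+\beta)k}\int_{t_{k+1}}^{1}f_*^p\right)^q
\]
by the standard discrete Hardy-type argument: the direction $\lesssim$ is trivial since $\int_{t_{k+1}}^{t_k}\le\int_{t_{k+1}}^1$; for $\gtrsim$, split $\int_{t_{k+1}}^1=\sum_{j\le k}\int_{t_{j+1}}^{t_j}$, apply the elementary inequality $(\sum a_j)^q\lesssim\sum a_j^q$ for $q\le 1$ (resp. use Minkowski/weighted-Hardy for $q\ge1$, absorbing the tail of $\sum_{k\ge j}2^{(\lambda+\beta)kq}\approx 2^{(\lambda+\beta)jq}$), and sum in $j$ using the decay of $2^{(\lambda+\beta)k}$.

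Finally I would translate the right-tail sum back to an integral. Using again $2^k\approx 1-\ln t$ on $[t_{k+1},t_k]$ and $dt/t\approx$ constant on each dyadic block together with the monotonicity of $t\mapsto\int_t^1 f_*^p(x)\,dx$, one gets
\[
\sum_{k\in\N}\left(2^{(\lambda+\beta)k}\int_{t_{k+1}}^{1}f_*^p(x)\,dx\right)^q\approx\int_0^1(1-\ln t)^{(\lambda+\beta)q-1}\left(\int_t^1 f_*^p(x)\,dx\right)^q\frac{dt}{t},
\]
and since $(\lambda+\beta)q-1=(\gamma+1)+\beta\frac mp-1=\gamma+\beta\frac mp$ this is exactly the claimed expression. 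The main obstacle, and the only place real care is needed, is the Hardy-type equivalence in the middle step: one must handle the cases $q=m/p\le1$ and $q>1$ separately, and verify that the hypothesis $\lambda+\beta<0$ (i.e. $\gamma+\beta\frac mp+1<0$) is precisely what makes the relevant geometric series convergent so that the $\gtrsim$ direction goes through; the rest is the same dyadic bookkeeping already in the paper.
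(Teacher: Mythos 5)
Your proof is correct and takes essentially the same route as the paper's: the same dyadic discretization $t_k=2^{1-2^k}$ via Lemma \ref{l60}, with the passage between block integrals and cumulative or tail integrals controlled exactly by the sign conditions $\gamma>-1$ and $\gamma+\beta\frac mp+1<0$; the paper merely runs the chain of equivalences in the opposite direction (starting from the right-hand integral) and outsources your discrete Hardy step to Lemmas 6.1 and 6.3 of \cite{FFGKR} instead of reproving it in-line. One wording slip worth fixing: $\int_{t_{k+1}}^{t_k}\frac{dt}{t}\approx 2^k$ is not constant in $k$, but your exponent bookkeeping ($2^{k[(\lambda+\beta)q-1]}$ times this $2^k$ giving $2^{(\lambda+\beta)qk}$) already accounts for it, so the final equivalence stands.
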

\begin{proof}
Put
$$I=\int_{0}^{1}(1-\ln t)^{\gamma+ \beta\frac{m}{p}}\left(\int_{t}^{1}f_\ast(x)^pdx\right)^{m/p}\frac{dt}{t}. $$
 Let $t_k=2^{1-2^k},$ $k\in \mathbb{N}.$ Since $\gamma + \beta\frac{m}{p}+1<0$, we can apply  the second assertion of Lemma $6.3$ in \cite{FFGKR} to obtain
$$ I\approx \sum\limits_{k \in \mathbb{N}}\left(\int_{t_{k+1}}^{1}f_\ast(x)^pdx\right)^{m/p} 2^{(\gamma + \beta\frac{m}{p}+1)k}, $$
then  using the  second assertion of Lemma $6.1$ in \cite{FFGKR} gives
$$ I\approx \sum\limits_{k \in \mathbb{N}}\left(\int_{t_{k+1}}^{t_k}f_\ast(x)^pdx\right)^{m/p} 2^{(\gamma + \beta\frac{m}{p}+1)k}, $$
by first assertion in Lemma $2.3$, we get
$$ I\approx \sum\limits_{k \in \mathbb{N}}\left(\int_{t_{k+1}}^{t_k}(1-\ln x)^\beta f_\ast(x)^pdx\right)^{m/p} 2^{(\gamma +1)k}, $$
since $\gamma> -1$ , we can apply the first assertion of Lemma $6.1$ in \cite{FFGKR} to obtain
$$ I\approx \sum\limits_{k \in \mathbb{N}}\left(\int_{0}^{t_k}(1-\ln x)^\beta f_\ast(x)^pdx\right)^{m/p} 2^{(\gamma +1)k}, $$
finally an application of the second assertion in Lemma $2.3$ yields
$$I\approx \|f\|^m_{G\Gamma(p,m;w_1,w_2)},$$
which completes the proof.
\end{proof}

We shall need in particular the   Corollary \ref{c2}, consequence of relation (\ref{e70}) and the following 
\begin{definition}{\bf of the small Lebesgue space \cite{IS, dff}}\\
{\it
The small Lebesgue space associated to the parameters $p\in]1,+\infty[$ and $\theta>0$ is the set\\
$L^{(p,\theta}(\O)=$$$\Big\{ f:\O\to\R\hbox{ measurable : } ||f||_{(p,\theta}=\int_0^1(1-\Log t)^{-\frac\theta p+\theta-1}\left(\int_0^tf^p_*(\s)d\s\right)^{1/p}\dfrac{dt}t<+\infty\Big\}.$$
}
\end{definition}
\begin{definition}{\bf of the grand Lebesgue space \cite{IS, dff}}\\
{\it
The grand Lebesgue space is the  associate space of the small Lebesgue space, with the parameters $p\in]1,+\infty[$ and $\theta>0$ is the set\\
$L^{p),\theta}(\O)=$$$\Big\{ f:\O\to\R\hbox{ measurable : } ||f||_{p),\theta}=\sup_{0<t<1}(1-\Log t)^{-\frac\theta p}\left(\int_t^1f^p_*(\s)d\s\right)^{1/p}\dfrac{dt}t<+\infty\Big\}.$$
}
\end{definition}

\begin{corollary}\label{c2}{\bf of Proposition \ref{p21}}\ \\
{\it
If $m=1, \g+1+\dfrac \beta p>0$, $\gamma>-1$ and $\beta\in\R$, the functions $w_i,\ i=1,2$ as in Proposition \ref{p21} then
$$\GG(p,1;w_1,w_2)=L^{(p,\theta},\ \theta=p'\left(\g+1+\dfrac\b p\right).$$
}
\end{corollary}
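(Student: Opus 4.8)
The plan is to reduce everything to the single-weight case already handled in Proposition \ref{p21} and then to read off the parameter $\theta$ by matching exponents with the definition of the small Lebesgue space. First I would check that the hypotheses are exactly the ones needed to be in case (2) of Proposition \ref{p21} with $m=1$: here $\gamma>-1$ is assumed outright, and $\gamma+\beta\frac mp+1=\gamma+\frac\beta p+1>0\GEQ0$ by the assumption $\gamma+1+\frac\beta p>0$. Hence $\GG(p,1;w_1,w_2)=\GG(p,1;\OV{w_1},1)$ with $\OV{w_1}(t)=t^{-1}(1-\Log t)^{\gamma+\frac\beta p}$, and the chain of equivalences that ends in relation (\ref{e70}) applies. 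Specializing that chain to $m=1$ — so that $q=\frac mp=\frac1p$, $\lambda=(\gamma+1)\frac pm=(\gamma+1)p$, and $(\lambda+\beta)q-1=\gamma+\frac\beta p$ — one gets
$$\rho(f)\approx\int_0^1(1-\Log t)^{\gamma+\frac\beta p}\left(\int_0^tf_*^p(x)\,dx\right)^{1/p}\dfrac{dt}t.$$

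Next I would compare this with the norm $\|f\|_{(p,\theta}$ in the definition of the small Lebesgue space, whose weight carries the power $-\frac\theta p+\theta-1$ of $(1-\Log t)$. Equating the two exponents, $-\frac\theta p+\theta-1=\gamma+\frac\beta p$, is equivalent to $\frac\theta{p'}=\gamma+1+\frac\beta p$, i.e. $\theta=p'\big(\gamma+1+\frac\beta p\big)$, which is precisely the value claimed; moreover this $\theta$ is strictly positive exactly because of the hypothesis $\gamma+1+\frac\beta p>0$, so the space $L^{(p,\theta}$ is well defined. Consequently $\rho(f)\approx\|f\|_{(p,\theta}$, and since each of $\GG(p,1;w_1,w_2)$ and $L^{(p,\theta}$ is defined as the set of measurable functions for which the corresponding functional is finite, the equivalence of the functionals yields $\GG(p,1;w_1,w_2)=L^{(p,\theta}$ with equivalent (quasi)norms.

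I do not expect a genuine analytic obstacle: everything nontrivial is already packaged in Proposition \ref{p21} (which in turn rests on Lemma \ref{l60}). The only point requiring care is the bookkeeping of exponents after setting $m=1$, together with the observation that the single hypothesis $\gamma+1+\frac\beta p>0$ simultaneously guarantees applicability of case (2) of Proposition \ref{p21} and positivity of $\theta$. If a fully self-contained argument were preferred, one could instead feed $H(x)=f_*^p(x)(1-\Log x)^\beta$ directly into Lemma \ref{l60} and re-derive (\ref{e70}) from scratch, but invoking Proposition \ref{p21} is shorter and avoids repeating the dyadic estimates.
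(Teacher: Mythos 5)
Your proof is correct and takes essentially the same route as the paper, which deduces the corollary directly from case (2) of Proposition \ref{p21} (i.e.\ relation (\ref{e70})) specialized to $m=1$, so that the resulting weight exponent $\gamma+\frac{\beta}{p}$ is matched with the exponent $-\frac{\theta}{p}+\theta-1$ in the definition of $L^{(p,\theta}$, giving $\theta=p'\bigl(\gamma+1+\frac{\beta}{p}\bigr)$. Your explicit remark that the single hypothesis $\gamma+1+\frac{\beta}{p}>0$ both ensures applicability of Proposition \ref{p21}(2) and the positivity of $\theta$ is exactly the bookkeeping the paper leaves implicit.
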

 
\section{\bf \large Some   $K$-functional computations and the  associated interpolation spaces}

{\it   
}
\subsection{The case of the couple $(L^{n'},L^{(n'})$}

\begin{theorem}\label{311}\ \\
{\it
Let $\f(t)=e^{1-\frac1{t^n}},\ 0<t\LEQ1$. Then
$$K(f,t;L^{n'},L^{(n'})\approx t\int_{\f(t)}^1(1-\Log\s)^{-\frac1{n'}}\left(\int_0^\s f_*^{n'}(x)dx\right)^{\frac1{n'}}\dfrac{d\s}\s\dot=K^2(t)$$
for all $f\in L^{n'}+L^{(n'}$.
}
\end{theorem}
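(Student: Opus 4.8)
The goal is to compute $K(f,t;L^{n'},L^{(n'})$, and the natural strategy is the one used throughout the paper for such couples: exploit the known description of the $K$-functional for the pair $(L^1,L^\infty)$ in terms of the rearrangement, and the fact that $L^{(n'}$ is a ``small'' Lebesgue space whose norm is an iterated integral of $f_*$. The plan is to prove the two inequalities $K(f,t;L^{n'},L^{(n'})\lesssim K^2(t)$ and $K^2(t)\lesssim K(f,t;L^{n'},L^{(n'})$ separately.

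\emph{Upper bound.} For the estimate $K\lesssim K^2$, I would produce an explicit near-optimal decomposition $f=f_0+f_1$ adapted to the parameter $t$. The classical recipe is to truncate the rearrangement at the level $s=\f(t)$ (equivalently, the level $e^{1-1/t^n}$), setting $f_1$ to be the part of $f$ where $f_*>f_*(\f(t))$ (or a variant thereof) and $f_0$ the remainder; one then estimates $\|f_0\|_{L^{n'}}$ directly and $\|f_1\|_{L^{(n'}}$ using the Hardy-type / monotonicity manipulations together with the explicit weight $(1-\log\s)^{-1/n'}$ appearing in the small-Lebesgue norm. The time-derivative / layer-cake structure should turn $\|f_1\|_{L^{(n'}}$ into exactly the integral $\int_{\f(t)}^1(1-\log\s)^{-1/n'}\bigl(\int_0^\s f_*^{n'}\bigr)^{1/n'}\,\frac{d\s}{\s}$, up to constants.

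\emph{Lower bound.} For $K^2\lesssim K$, I would use the standard duality/testing trick: for \emph{any} decomposition $f=g_0+g_1$ with $g_0\in L^{n'}$, $g_1\in L^{(n'}$, one has a pointwise rearrangement inequality $f_*(s)\lesssim (g_0)_*(s/2)+(g_1)_*(s/2)$, and one plugs this into the defining integral of $K^2(t)$. The term coming from $g_0$ is controlled by $\|g_0\|_{L^{n'}}$ using that $\int_{\f(t)}^1(1-\log\s)^{-1/n'}\frac{d\s}{\s}\approx t^{-1}$ times a bounded factor (this is where the precise definition of $\f$, namely $\f(t)=e^{1-1/t^n}$, is calibrated: the factor $t$ out front matches the $L^{n'}$ side), while the term coming from $g_1$ is bounded by $\|g_1\|_{L^{(n'}}$ essentially because $K^2(t)$ restricted to that piece is dominated by the full small-Lebesgue integral over $(0,1)$. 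Taking the infimum over all decompositions yields $K^2(t)\lesssim K(f,t)$.

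\emph{Main obstacle.} The routine parts are the rearrangement inequalities and the Hardy-type estimates; the delicate point is getting the cutoff level and the outer factor $t$ to match on both sides simultaneously — i.e.\ checking that with $\f(t)=e^{1-1/t^n}$ one indeed has $\int_{\f(t)}^1(1-\log\s)^{-1/n'}\frac{d\s}{\s}\approx (1-\log\f(t))^{1/n}\approx t^{-1}$ up to the right power, so that the $L^{n'}$-contribution in the lower bound is comparable to $t\cdot\|g_0\|_{L^{n'}}$ and not larger. A secondary technical nuisance is handling the regime where $t$ is close to $1$ versus close to $0$ (the role of the finite measure $|\O|=1$ and the truncation of the integral at the endpoint $1$), and ensuring the discrete/dyadic comparisons via Lemma~\ref{l60}-type arguments are applied with the correct monotonicity of $s\mapsto\int_0^s f_*^{n'}$.
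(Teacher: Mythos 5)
Your lower bound is exactly the paper's argument (arbitrary decomposition, $f_*(x)\LEQ g_*(x/2)+h_*(x/2)$, and the calibration $t\int_{\varphi(t)}^1(1-\Log\s)^{-1/n'}\frac{d\s}{\s}\approx t\,(1-\Log\varphi(t))^{1/n}\approx 1$), so that half is fine. The genuine problem is in your upper bound: you assign the two halves of the truncation to the wrong spaces. You put the peak $f_1$ (the part where $|f|>f_*(\varphi(t))$) into $L^{(n'}$ and the truncated remainder $f_0$ into $L^{n'}$, whereas the correct decomposition (the one the paper takes, as in \cite{FFGKR}) is the opposite: $g=\big(|f|-f_*(\varphi(t))\big)_+$ goes into $L^{n'}$ (the larger space, no factor $t$) and $h=f-g$, which is bounded by $f_*(\varphi(t))$, goes into $L^{(n'}$ (with the factor $t$) --- precisely as in the $(L^1,L^\infty)$ model, where the peak goes to the big space and the bounded part to the small one.

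Two concrete failures of your assignment. First, the peak need not belong to $L^{(n'}$ at all: take $f_*(s)=s^{-1/n'}(1-\Log s)^{-1}$, which is in $L^{n'}$ but not in $L^{(n'}$; then $\int_0^s\big((f_*-f_*(\varphi(t)))_+\big)^{n'}dx\approx(1-\Log s)^{1-n'}$ as $s\to0$, and the small-Lebesgue norm of the peak contains $\int_0(1-\Log s)^{-1/n'}(1-\Log s)^{-1/n}\frac{ds}{s}=\int_0(1-\Log s)^{-1}\frac{ds}{s}=+\infty$. Moreover $(f_1)_*$ vanishes beyond $\varphi(t)$, so its norm can never reproduce the $\s$-dependent quantity $\int_0^\s f_*^{n'}$ over $(\varphi(t),1)$ as you claim. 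Second, the remainder's $L^{n'}$ norm is not controlled by $K^2(t)$: for $f\equiv1$ and small $t$ your decomposition gives $f_1=0$, $f_0=f$, hence $\|f_0\|_{L^{n'}}=1$, while $K(f,t)\LEQ t\|f\|_{L^{(n'}}\approx t\approx K^2(t)$, so the bound $\|f_0\|_{L^{n'}}\lesssim K^2(t)$ is false; the factor $t$ must sit on the bounded piece, which forces it into the $L^{(n'}$ slot. With the roles restored the proof does go through, but the second piece is not a one-line layer-cake identity: one estimates $\|g\|_{L^{n'}}\LEQ\big(\int_0^{\varphi(t)}f_*^{n'}\big)^{1/n'}\lesssim K^2(t)$ by the same calibration used in the lower bound, and then splits $t\|h\|_{L^{(n'}}$ into the contributions of $f_*(\varphi(t))\chi_{[0,\varphi(t)]}$ and $f_*\chi_{[\varphi(t),1]}$, giving three terms $I_1,I_2,I_3$, each of which must be checked to be $\lesssim K^2(t)$; this is the step your sketch does not supply.
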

 
 \ 
 \pr
First , let us show:
$$K^2(t)\lesssim K(f,t;L^{n'},L^{(n'}).$$

Let $f=g+h\in L^{n'}+L^{(n'}$. Then, for all $x$,  $f_*(x)\LEQ g_*\left(\dfrac x2\right)+h_*\left(\dfrac x2\right)$. Therefore, we have
\begin{eqnarray*}
K^2(t)&\LEQ&||g||_{L^{n'}}t\int^1_{\f(t)}(1-\Log\s)^{-\frac1{n'}}\dfrac{ d\s}\s+t||h||_{L^{(n'}}\\
&\lesssim&||g||_{L^{n'}}+t||h||_{L^{(n'}}
\end{eqnarray*}
Taking the infinimum, one derives
\begin{equation}\label{eq3111}
K^2(t)\lesssim K(f,t;L^{n'},L^{(n'}).
\end{equation}

For the converse, we adopt the same decomposition as in \cite{FFGKR}
\begin{equation}\label{3.2019}
g=\left(|f|-f_*\big(\f(t)\big)\right)_+,\quad h=f-g.
\end{equation}
Then 
$$f_*=h_*+g_*,\qquad g_*=\Big(f_*-f_*\big(\f(t)\big)\Big)_+,$$
$$h_*=f_*\big(\f(t)\big)\chi_{[0,\f(t)]}+f_*(s)\chi_{[\f(t),1]}$$
\begin{eqnarray}\label{eq3.2}
||g||_{L^{n'}}&\lesssim&\left[\int_0^{\frac{\f(t)}2}\Big(f_*(s)-f_*\big(\f(t)\big)_+^{n'}ds\right]^{\frac1{n'}}
\lesssim\left[\int_0^{\frac{\f(t)}2}f^{n'}_*(s)ds\right]^{\frac1{n'}}\nonumber\\
&=&||f||_{L^{n'}(0,\frac{\f(t)}2)}\lesssim
\dfrac{K^2(t)}{\DST\int_{\f(t)/2}^1(1-\Log\s)^{-\frac1{n'}}\dfrac{ d\s}\s}\lesssim K^2(t).
\end{eqnarray}
As in \cite{FFGKR}, we have 
\begin{eqnarray}
t||h||_{L^{(n'}}&\LEQ&t\left(\int_0^{\f(t)}(1-\Log s)^{-1/n'}s^{1/n'}\dfrac{ds}s\right)f_*\big(\f(t)\big)\NN\\
&&+t\left(\int_{\f(t)}^1(1-\Log s)^{-1/n'}\dfrac{ds}s\right)\f(t)^{1/n'}f_*\big(\f(t)\big)\NN\\
&&+t\int_{\f(t)}^1(1-\Log s)^{-1/n'}\left(\int_{\f(t)}^sf_*^{n'}(x)dx\right)^{1/n'}\dfrac{ds}s\label{eq8}\\
&=&I_1+I_2+I_3.\NN
\end{eqnarray}
Since
$$\int_0^{\f(t)}s^{1/n'}(1-\Log s)^{-1/n'}\dfrac{ds}s\lesssim\f(t)^{1/n'}\Big(1-\Log\f(t)\Big)^{-1/n'},$$
we obtain for the first term $I_1$
\begin{equation}\label{eq9}
I_1\lesssim t\Big(1-\Log\f(t)\Big)^{-1/n'}\f(t)^{1/n'}f_*\big(\f(t)\big)\lesssim  K^2(t)
\end{equation}
\begin{equation}\label{eq10}
I_2\lesssim t^{1/n'}\f(t)^{1/n'}f_*\Big(\f(t)\Big)\lesssim t^{1/n'}\sup_{0<s<\f(t)}s^{1/n'}f_*(s)\lesssim K^2(t)
\end{equation}
and
\begin{equation}\label{eq11}
I_3\lesssim K^2(t)\end{equation}
with relations (\ref{eq9}) to (\ref{eq11}), we derive
\begin{equation}\label{eq12}
t||h||_{L^{(n'}}\LEQ I_1+I_2+I_3\lesssim K^2(t).
\end{equation}
Thus relations (\ref{eq3.2}) and (\ref{eq12}) infer :
\begin{equation}\label{eq13}
||g||_{L^{n'}}+t||h||_{L^{(n'}}\lesssim K^2(t).
\end{equation}
Thus
\begin{equation}\label{eq14}
K(f,t;L^{n'},L^{(n'})\lesssim K^2(t).
\end{equation}
The combination of the above relations (\ref{eq14}), (\ref{eq3111}) gives Theorem \ref{311}.\HF
\begin{corollary}{\bf Theorem \ref{311}}\label{ct}\ \\
{\it
One has, for $r\in[1,+\infty[$,\ \  $0<\t<1$,
$$||f||^r_{(L^{n'},L^{(n'})_{\t,r}}\approx\int_0^1(1-\Log x)^{\frac{\t r}n}\left(\int_0^x
f^{n'}_*(s) ds\right)^{\frac r{n'}}\dfrac{dx}{x(1-\Log x)}.$$
}
\end{corollary}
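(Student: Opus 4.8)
The plan is to substitute the $K$-functional formula of Theorem~\ref{311} into the definition of the interpolation norm and then to identify the resulting iterated integral with the Gamma-norm on the right after a single change of variables. Since $(L^{n'},L^{(n'})$ is an ordered couple ($L^{(n'}\hookrightarrow L^{n'}$) and $\a=0$ here, the definition recalled in Section~2 together with Theorem~\ref{311} gives
$$\|f\|^r_{(L^{n'},L^{(n'})_{\t,r}}=\int_0^1 t^{-\t r-1}K(f,t;L^{n'},L^{(n'})^r\,dt\approx\int_0^1 t^{r-\t r-1}\left(\int_{\f(t)}^1(1-\Log\s)^{-\frac1{n'}}\Phi(\s)\,\frac{d\s}\s\right)^{\!r} dt,$$
where $\f(t)=e^{1-\frac1{t^n}}$ and $\Phi(\s)=\big(\int_0^\s f_*^{n'}(x)\,dx\big)^{1/n'}$. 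I would then perform the substitution $x=\f(t)$, i.e. $t=(1-\Log x)^{-1/n}$, so that $dt=\frac1n(1-\Log x)^{-\frac1n-1}\frac{dx}x$ and $t^{r-\t r-1}=(1-\Log x)^{(\t r-r+1)/n}$. Collecting the powers of $(1-\Log x)$ and absorbing the constant $\frac1n$ turns the last integral into
$$\int_0^1(1-\Log x)^{-\frac{r(1-\t)}n-1}\left(\int_x^1(1-\Log\s)^{-\frac1{n'}}\Phi(\s)\,\frac{d\s}\s\right)^{\!r}\frac{dx}x,$$
so the Corollary is reduced to showing that this expression is equivalent to $\int_0^1(1-\Log x)^{\frac{\t r}n-1}\Phi(x)^r\frac{dx}x$.

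For the lower estimate I would use only that $\Phi$ is non-decreasing. On $(0,\tfrac12)$, where $1-\Log x$ stays bounded away from $1$, this gives $\int_x^1(1-\Log\s)^{-\frac1{n'}}\Phi(\s)\frac{d\s}\s\GEQ\Phi(x)\int_x^1(1-\Log\s)^{-\frac1{n'}}\frac{d\s}\s=n\,\Phi(x)\big((1-\Log x)^{1/n}-1\big)\gtrsim\Phi(x)(1-\Log x)^{1/n}$; substituting this back, the identity of exponents $\frac rn-\frac{r(1-\t)}n-1=\frac{\t r}n-1$ reproduces the required lower bound on $(0,\tfrac12)$. On $(\tfrac12,1)$ both sides are comparable to $\|f\|_{L^{n'}}^r$ — one uses $K(f,t)\approx\|f\|_{L^{n'}}$ there together with $L^{(n'}\hookrightarrow L^{n'}$, and the bound $\Phi(x)\GEQ\|f\|_{L^{n'}}x$ coming from the concavity of $\s\mapsto\int_0^\s f_*^{n'}$ — so that range causes no difficulty.

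The substantial direction is the upper estimate, i.e. that the expression displayed at the end of the first paragraph is $\lesssim\int_0^1(1-\Log x)^{\frac{\t r}n-1}\Phi(x)^r\frac{dx}x$. Here the inner integral is a reversed Hardy average of $\Phi$ against the measure $(1-\Log\s)^{-1/n'}\frac{d\s}\s$, and I would prove the estimate by the change of variables $y=1-\Log x$, $v=1-\Log\s$, which converts every weight into a power on $(1,\infty)$ and reduces the claim to the one-dimensional weighted Hardy inequality $\int_1^\infty y^{-\frac{r(1-\t)}n-1}\big(\int_1^y v^{\frac1n-1}G(v)\,dv\big)^r dy\lesssim\int_1^\infty y^{\frac{\t r}n-1}G(y)^r dy$ with $G(y)=\Phi(e^{1-y})$. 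For $r=1$ this is just Tonelli, and for $r>1$ the classical Muckenhoupt characterization applies, its sufficiency condition collapsing — after the power weights are inserted — to an exact identity of exponents that holds precisely because $0<\t<1$ (which is also what guarantees $\int_b^\infty y^{-r(1-\t)/n-1}dy<\infty$).

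An alternative closer to the methods of the paper is to discretise with $t_k=2^{1-2^k}$, using $2^k\approx1-\Log x$ on $[t_{k+1},t_k]$ (first assertion of Lemma~\ref{l60}), the monotonicity of $\Phi$, and the sum-equivalences of Lemmas~6.1 and~6.3 of \cite{FFGKR} — whose hypotheses are met since the exponents $-\frac{r(1-\t)}n$ and $\frac{\t r}n$ carry the required signs — to bring both integrals to the common series $\sum_k 2^{\t r k/n}\big(\int_0^{t_k}f_*^{n'}(s)\,ds\big)^{r/n'}$. Combining the two estimates yields the stated equivalence. The main obstacle is the verification of the Hardy/Muckenhoupt condition and the consistent bookkeeping of the logarithmic weights through the substitution — both dissolve once the hypothesis $0<\t<1$ is put to use.
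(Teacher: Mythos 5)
Your proposal is correct and follows essentially the same route as the paper's proof: substitute the $K$-functional of Theorem \ref{311} into the interpolation norm, change variables $x=\varphi(t)$, obtain the upper estimate by a weighted Hardy inequality (valid since $0<\theta<1$), and get the converse by bounding the inner integral from below via the monotonicity of $\int_0^x f_*^{n'}(s)\,ds$ and absorbing the remaining contribution through the embedding of $(L^{n'},L^{(n'})_{\theta,r}$ into $L^{n'}$. The only differences are cosmetic: you split at $x=1/2$ and verify the Hardy step explicitly through Muckenhoupt's condition after passing to power weights, whereas the paper instead introduces a correction term $I_r\lesssim\|f\|^r_{L^{n'}}$ and invokes Hardy's inequality directly.
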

\pr
\def\lnp{{L^{n'}}}
\def\lpnp{{L^{(n'}}}
\def\up{{\frac1p}}
One has for $f\in \lnp+\lpnp,\ 1\LEQ r+\infty$
\begin{equation}\label{eq3112}
||f||^r_{(\lnp,\lpnp)_{\t,r}}=\int_0^1[t^{-\t}K(f,t)]^r\dfrac{dt}t
\end{equation}
Using Theorem \ref{311} and making a change of variable $x=\f(t)$ that is \\$t=(1-\Log x)^{-\frac1n}$, one derives from relation (\ref{eq3112})
$$||f||^r_{(\lnp,\lpnp)_{\t,r}}\approx J_f$$
$$J_f=\int_0^1\left[(1-\Log x)^{\frac{\t-1}n}\int_x^1(1-\Log \s)^{-\frac1{n'}}\left(\int_0^\s f_*^{n'}(x)dx\right)^{\frac1{n'}}\dfrac{d\s}\s\right]^r\dfrac {dx}{x(1-\Log x)}.$$
Applying Hardy's inequality (taking into account that $\theta<1$), we have

$$J_f\lesssim\int_0^1\left[(1-\Log x)^{\frac{\t}n}\left(\int_0^x f^{n'}_*(\s)d\s\right)^{\frac1{n'}}\right]^r\dfrac{dx}{x(1-\Log x)}=\WT J_f.$$

For the converse, since we have for all $x>0$
$$\int_x^1(1-\Log\s)^{-\frac1{n'}}\left(\int_0^\s f_*^{n'}(s)ds\right)^{-\frac1{n'}}\dfrac{d\s}\s\GEQ \left(\int_0^xf^{n'}_*\right)^{\frac1{n'}}
(1-\Log x)^{-\frac1{n'}}|\Log x|,$$

we then have
\begin{equation}\label{eq3113}
J_f\GEQ \int_0^1\left[(1-\Log x)^{\frac\t n-1-\frac1r}|\Log x|\left(\int_0^xf_*^{n'}(s)ds\right)^{\frac1{n'}}\right]^r\dfrac{dx} x.
\end{equation}

From this relation we deduce
\begin{equation}\label{eq3114}
\WT J_f\lesssim J_f+ \int_0^1\left[(1-\Log x)^{\frac\t n-1-\frac1r}\left(\int_0^xf_*^{n'}(s)ds\right)^{\frac1{n'}}\right]^r\dfrac{dx} x\dot=
J_f+I_r,
\end{equation}
while to estimate the last integral, one has
$$I_r\LEQ||f||^r_\lnp\int_0^1(1-\Log x)^{(\frac\t n)r-1}\dfrac{dx}x\LEQ c||f||^r_\lnp.$$
Since $(\lnp,\lpnp)_{\t,r}$ is continuously embedded in  $\lnp$, we then have
\begin{equation}\label{eq3115}
I_r\LEQ c||f||^r_{(\lnp,\lpnp)_{\t,r}}.
\end{equation}
Thus, we derive
$$\WT J_f\lesssim J_f+I_r\lesssim ||f||^r_{(\lnp,\lpnp)_{\t,r}}.$$
\ \ \HF

{\bf Proof of Theorem \ref{t1.3}}\\
We derive it from Corollary \ref{ct} of Theorem \ref{311}.
\ \ \HF

\subsection{Interpolation between grand and classical Lebesgue spaces in the critical case}

\begin{lemma}\label{L1gc}\ \\
 Let $1<p<\infty$, and let $f\in L^{p)}.$ Then, for all $0<t<1,$
$$
K(f,t;L^{p)},L^p)\approx \sup_{0<s<\varphi(t)}(1-\ln s)^{-1/p}\left(\int_{s}^{1}f_\ast(x)^pdx\right)^{1/p},
$$
where  $\varphi(t)=e^{1-\frac{1}{t^p}}.$
\end{lemma}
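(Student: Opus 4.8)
The plan is to compute the $K$-functional $K(f,t;L^{p)},L^p)$ from both sides, using the explicit description of the $L^{p)}$ quasi-norm (via the supremum over $0<s<1$ of $(1-\ln s)^{-1/p}\big(\int_s^1 f_*^p\big)^{1/p}$) together with the expected near-optimal decomposition of $f$ at the level $\varphi(t)=e^{1-1/t^p}$. The structure parallels the proof of Theorem~\ref{311}: a lower bound for $K$ coming from a cheap estimate applied to an arbitrary decomposition, and an upper bound coming from the specific decomposition \eqref{3.2019}.

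\textbf{Upper bound.} First I would take the concrete splitting $g=\big(|f|-f_*(\varphi(t))\big)_+$ and $h=f-g$, so that $g_*=\big(f_*-f_*(\varphi(t))\big)_+$ is supported in $[0,\varphi(t)]$ and $h_*=f_*(\varphi(t))\chi_{[0,\varphi(t)]}+f_*(s)\chi_{[\varphi(t),1]}$. The $L^p$-term is easy: $\|g\|_{L^p}^p\lesssim\int_0^{\varphi(t)}f_*^p(s)\,ds$. For the $L^{p)}$-term I must bound $t\|h\|_{L^{p)}}$; using the definition of $\|\cdot\|_{p)}$ one splits the sup over $s$ into $0<s<\varphi(t)$ and $\varphi(t)<s<1$. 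On the first range $\int_s^1 h_*^p\le\int_s^{\varphi(t)}f_*^p(\varphi(t))\,dx+\int_{\varphi(t)}^1 f_*^p$, and the dominant contribution, after multiplying by $(1-\ln s)^{-1}$ and by $t$, is controlled by $t\,(1-\ln\varphi(t))^{-1/p}\big(\varphi(t)^{1/p}f_*(\varphi(t))+(\int_{\varphi(t)}^1 f_*^p)^{1/p}\big)$; the choice $\varphi(t)=e^{1-1/t^p}$ is exactly what makes $t(1-\ln\varphi(t))^{-1/p}\approx t\cdot t=t^2$ cancel against $\varphi(t)^{1/p}\approx e^{(1-1/t^p)/p}$... more precisely the identities $1-\ln\varphi(t)=t^{-p}$ and $t(1-\ln\varphi(t))^{-1/p}=t\cdot t=t^2$ are the pivot, just as in Theorem~\ref{311}. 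On the second range $s>\varphi(t)$ one gets $t\sup_{\varphi(t)<s<1}(1-\ln s)^{-1/p}(\int_s^1 f_*^p)^{1/p}$, which is $\le$ the claimed right-hand side up to the factor $t$; since $t<1$ this is absorbed, but one must check it is also $\lesssim$ the right-hand side of the asserted equivalence (it is a restriction of the same sup over a smaller range, times $t<1$). Combining, $\|g\|_{L^p}+t\|h\|_{L^{p)}}\lesssim$ RHS, hence $K(f,t)\lesssim$ RHS.

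\textbf{Lower bound.} Conversely, for any decomposition $f=g+h$ with $g\in L^{p)}$, $h\in L^p$, I would use $f_*(x)\le g_*(x/2)+h_*(x/2)$ and estimate the right-hand side quantity $Q(t):=\sup_{0<s<\varphi(t)}(1-\ln s)^{-1/p}(\int_s^1 f_*^p)^{1/p}$: split $\int_s^1 f_*^p$ using the sublinearity, the $h$-part gives $\lesssim(1-\ln s)^{-1/p}\|h\|_{L^p}\le\|h\|_{L^p}$ for $s$ in the admissible range (since there $1-\ln s\ge 1-\ln\varphi(t)=t^{-p}$, so $(1-\ln s)^{-1/p}\le t$), yielding $\lesssim t\|h\|_{L^p}$; the $g$-part gives $\lesssim\sup_{0<s<1}(1-\ln s)^{-1/p}(\int_{s}^1 g_*^p)^{1/p}=\|g\|_{L^{p)}}$. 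Taking the infimum over decompositions gives $Q(t)\lesssim\|g\|_{L^{p)}}+t\|h\|_{L^p}$ for every splitting, whence $Q(t)\lesssim K(f,t)$. Together with the upper bound this proves the equivalence.

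\textbf{Main obstacle.} The delicate point is the $L^{p)}$-estimate of $h$ on the range $0<s<\varphi(t)$: one has to see that, despite $h_*$ being the constant $f_*(\varphi(t))$ there, the weighted sup $(1-\ln s)^{-1/p}(\int_s^{\varphi(t)}f_*^p(\varphi(t))\,dx)^{1/p}$ behaves like $(1-\ln s)^{-1/p}\varphi(t)^{1/p}f_*(\varphi(t))$ and is maximized (up to constants) at $s\approx\varphi(t)$, so that multiplying by $t$ produces $t\varphi(t)^{1/p}(1-\ln\varphi(t))^{-1/p}f_*(\varphi(t))=t^2\varphi(t)^{1/p}f_*(\varphi(t))$; one must then recognize this as $\lesssim$ RHS, which follows because $\varphi(t)^{1/p}f_*(\varphi(t))\le\big(\int_{?}f_*^p\big)^{1/p}$-type bounds or, more directly, from the same kind of elementary monotonicity argument used for the terms $I_1,I_2$ in Theorem~\ref{311}. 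I expect the bookkeeping of constants and the careful handling of the two ranges in the supremum to be the only real work; the logarithmic change of variables and Hardy-type steps are routine and mirror the proof of Corollary~\ref{ct}.
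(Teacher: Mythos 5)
Your lower bound is fine and is exactly the paper's argument: for an arbitrary splitting $f=g+h$ with $g\in L^{p)}$, $h\in L^{p}$, the restriction $0<s<\varphi(t)$ gives $(1-\ln s)^{-1/p}\le(1-\ln\varphi(t))^{-1/p}=t$, hence $Q(t)\lesssim\|g\|_{L^{p)}}+t\|h\|_{L^{p}}$ and so $Q(t)\lesssim K(f,t;L^{p)},L^{p})$. The genuine gap is in your upper bound. You take the correct decomposition $g=\bigl(|f|-f_*(\varphi(t))\bigr)_+$, $h=f-g$, but you then estimate $\|g\|_{L^{p}}+t\|h\|_{L^{p)}}$, i.e.\ you attach the $L^{p}$ norm (without the factor $t$) to the peak part and $t$ times the $L^{p)}$ norm to the truncated part. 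For the couple $(L^{p)},L^{p})$ the $K$-functional is $\inf\{\|f_0\|_{L^{p)}}+t\|f_1\|_{L^{p}}\}$, so what you bound is the $K$-functional of the \emph{reversed} couple $(L^{p},L^{p)})$, a different object (it equals $t\,K(f,1/t;L^{p)},L^{p})$), and the estimate does not transfer. Worse, the quantity you call easy, $\|g\|_{L^{p}}^{p}\lesssim\int_0^{\varphi(t)}f_*^{p}(s)\,ds$, is in general infinite for $f\in L^{p)}\setminus L^{p}$: the peaks near the origin are exactly what keeps $f$ out of $L^{p}$, and this term cannot be dominated by the finite right-hand side $\sup_{0<s<\varphi(t)}(1-\ln s)^{-1/p}\bigl(\int_s^1 f_*^{p}\bigr)^{1/p}$. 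This also explains the spurious $t^2$ appearing in your ``main obstacle'' discussion.

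The repair is to swap the roles, which is what the paper does: the peak part $g$ goes into $L^{p)}$, where $\|g\|_{L^{p)}}\le K_1(t)$ because $g_*\le f_*$ is supported in $[0,\varphi(t)]$, so the supremum defining its grand Lebesgue norm is effectively taken over $0<s<\varphi(t)$; the truncated part $h$ goes into $L^{p}$ with the factor $t$, and $t\|h\|_{L^{p}}\lesssim t\,\varphi(t)^{1/p}f_*(\varphi(t))+t\bigl(\int_{\varphi(t)}^1 f_*^{p}\bigr)^{1/p}$, which is $\lesssim K_1(t)$ using $t=(1-\ln\varphi(t))^{-1/p}\approx\bigl(1-\ln\tfrac{\varphi(t)}{2}\bigr)^{-1/p}$ together with the monotonicity bound $\varphi(t)^{1/p}f_*(\varphi(t))\lesssim\bigl(\int_{\varphi(t)/2}^{\varphi(t)}f_*^{p}(x)\,dx\bigr)^{1/p}$ and, for the second term, the trivial inequality $\int_{\varphi(t)}^1 f_*^{p}\le\int_s^1 f_*^{p}$ for $s<\varphi(t)$. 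With that correction your argument coincides with the paper's proof; as written, the upper-bound half does not prove the stated equivalence.
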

\begin{proof} Fix $f\in L^{p)}$ and $0<t<1.$ Set
$$K_1(t)=\sup_{0<s<\varphi(t)}(1-\ln s)^{-1/p}\left(\int_{s}^{1}f_\ast(x)^pdx\right)^{1/p}.$$
First we show that
\begin{equation}\label{e1gcL1}
K_1(t)\lesssim K(f,t;L^{p)},L^p).
\end{equation}
Let $f=g+h$ be an arbitrary decomposition with $g \in L^{p)}$ and $h \in L^{p}.$ Using the elementary inequality $f_\ast(x)\LEQ g_\ast(x/2)+h_\ast(x/2) $, we derive
\begin{eqnarray*}
K_1(t)&\lesssim& \|g\|_{L^{p)}}+ \|h\|_{L^p}\sup_{0<s<\varphi(t)}(1-\ln s)^{-1/p}\\
&=&\|g\|_{L^{p)}}+ t\|h\|_{L^p},
\end{eqnarray*}
from which  (\ref{e1gcL1}) follows. Next we establish the converse estimate
\begin{equation}\label{e2gcL2}
 K(f,t;L^{p)},L^p)\lesssim K_1(t).
\end{equation}
To this end, we take the same particular decomposition  $f=g+h$ as in Theorem 3.1 relation (\ref{3.2019}).  Clearly,
\begin{equation}\label{e2gcL3}
 \|g\|_{L^{p)}}\LEQ K_1(t).
 \end{equation}
Next we note that
\begin{eqnarray*}
t\|h\|_{L^p}&=&tf_\ast(\varphi(t))[\varphi(t)]^{1/p}+ t\left(\int_{\varphi(t)}^{1}f_\ast(x)^pdx\right)^{1/p}\\
&\approx& \left(1-\ln \frac{\varphi(t)}{2}\right)^{-1/p}f_\ast(\varphi(t))[\varphi(t)]^{1/p}\\
&&+ \left(\sup\limits_{0<s<\phi(t)}(1-\ln s)^{-1/p}\right)\left(\int_{\varphi(t)}^{1}f_\ast(x)^pdx\right)^{1/p},\\
&\lesssim& \left(1-\ln \frac{\varphi(t)}{2}\right)^{-1/p}\left(\int_{\frac{\varphi(t)}{2}}^{\varphi(t)}f_\ast(x)dx\right)^{1/p}\\
&&+ \sup\limits_{0<s<\phi(t)}(1-\ln s)^{-1/p}\left(\int_{s}^{1}f_\ast(x)^pdx\right)^{1/p},\\
\end{eqnarray*}
which gives
\begin{equation}\label{e2gcL4}
 t\|h\|_{L^p}\LEQ K_1(t).
 \end{equation}
Now (\ref{e2gcL2})  follows from (\ref{e2gcL3}) and (\ref{e2gcL4}). The proof is complete.
\end{proof}

\begin{theorem}\ \\
Let $1<p<\infty$, $0<\theta<1$, and  $1\LEQ r <\infty.$ Then

$$ (L^{p)},L^p)_{\theta,r}= G\Gamma(p,r;w_1,w_2),$$
where   $w_1(t)= t^{-1}(1-\ln t)^{r\theta/p-1}$ and $w_2(t)= (1-\ln t)^{-1}.$
\end{theorem}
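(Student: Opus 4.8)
The plan is to use the $K$-functional formula from Lemma~\ref{L1gc} to reduce the interpolation norm to an explicit expression in terms of $f_*$, and then to recognize that expression, after a change of variables, as the $G\Gamma(p,r;w_1,w_2)$ norm. Concretely, by definition $\|f\|^r_{(L^{p)},L^p)_{\theta,r}}=\int_0^1[t^{-\theta}K(f,t;L^{p)},L^p)]^r\,\tfrac{dt}{t}$, and substituting the equivalence from Lemma~\ref{L1gc} gives
\begin{equation*}
\|f\|^r_{(L^{p)},L^p)_{\theta,r}}\approx\int_0^1 t^{-\theta r}\left(\sup_{0<s<\varphi(t)}(1-\ln s)^{-1/p}\left(\int_s^1 f_*(x)^p\,dx\right)^{1/p}\right)^r\frac{dt}{t}.
\end{equation*}
The first step is the change of variable $x=\varphi(t)=e^{1-1/t^p}$, i.e. $t=(1-\ln x)^{-1/p}$, under which $t^{-\theta r}=(1-\ln x)^{\theta r/p}$ and $\tfrac{dt}{t}\approx \tfrac{dx}{x(1-\ln x)}$; this turns the outer integral into one over $x\in(0,1)$ with weight $(1-\ln x)^{\theta r/p-1}\,\tfrac{dx}{x}$, which is exactly $w_1(x)\,dx$ up to the Hardy-type manipulation below. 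So the goal becomes to show
\begin{equation*}
\int_0^1(1-\ln x)^{\theta r/p-1}\left(\sup_{0<s<x}(1-\ln s)^{-1/p}\left(\int_s^1 f_*^p\right)^{1/p}\right)^r\frac{dx}{x}\approx \|f\|^r_{G\Gamma(p,r;w_1,w_2)}.
\end{equation*}

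The second step handles the supremum. Because $(1-\ln s)^{-1/p}(\int_s^1 f_*^p)^{1/p}$ is, up to equivalent constants, comparable on dyadic blocks $[t_{k+1},t_k]$ (with $t_k=2^{1-2^k}$) to its value at an endpoint — since $1-\ln s\approx 2^k$ there and $\int_s^1 f_*^p$ is monotone — I would discretize: both the outer integral against $(1-\ln x)^{\theta r/p-1}\tfrac{dx}{x}$ and the inner supremum reduce to sums over $k\in\mathbb N$, and the running supremum $\sup_{0<s<x}$ becomes $\sup_{j\GEQ k}$ over the dyadic index. The key classical fact is that for a nonnegative sequence $a_k$ the weighted $\ell^r$ norm of its running supremum $\sup_{j\GEQ k}a_j$ is equivalent to the weighted $\ell^r$ norm of $a_k$ itself, provided the weights satisfy a doubling/summability condition — here $2^{(\theta r/p-1)\cdot}$-type weights with $\theta r/p>0$ make this a convergent geometric tail, so $\sum_k 2^{(\theta r/p)k}(\sup_{j\GEQ k}a_j)^r\approx\sum_k 2^{(\theta r/p)k}a_k^r$. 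Applying this with $a_k\approx (1-\ln t_{k+1})^{-1/p}(\int_{t_{k+1}}^1 f_*^p)^{1/p}$ removes the supremum at the cost of equivalent constants. This is essentially the same mechanism used in Lemma~\ref{cvL1} and in Lemma~6.1/6.3 of \cite{FFGKR}, so I would cite those rather than redo the dyadic estimates.

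The third step is to identify the resulting sum with the $G\Gamma$ norm. After removing the supremum we are left with $\sum_k 2^{(\theta r/p -1)k}\,2^{-k\cdot r/p}(\int_{t_{k+1}}^1 f_*^p)^{r/p}\cdot 2^k$ — more cleanly, with $\beta=-1$ and the exponent bookkeeping, this matches Lemma~\ref{cvL1} applied with $\gamma=\theta r/p-1$, $m=r$, $w_2(t)=(1-\ln t)^{-1}$: note $\gamma+\beta\tfrac mp+1=\theta r/p - r/p = (r/p)(\theta-1)<0$ since $\theta<1$, so the hypothesis of Lemma~\ref{cvL1} is exactly satisfied, and that lemma gives $\|f\|^r_{G\Gamma(p,r;w_1,w_2)}\approx \int_0^1(1-\ln t)^{\gamma+\beta r/p}(\int_t^1 f_*^p)^{r/p}\tfrac{dt}{t}$, which is precisely the integral form of our sum. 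Chaining the three equivalences yields the claimed equality of spaces (with equivalence of quasinorms). The main obstacle is the second step: one must be careful that the "running supremum is equivalent to the pointwise quantity" reduction genuinely applies — the direction $\GEQ$ is trivial, but the direction $\lesssim$ needs the weight to put enough mass near the small-$x$ (large-$k$) end, which is where the condition $\theta>0$ (equivalently $\gamma>-1$) is used; I would make sure to invoke this hypothesis explicitly and point to the corresponding discrete Hardy-type lemma in \cite{FFGKR}. Everything else is routine change-of-variables and bookkeeping of exponents.
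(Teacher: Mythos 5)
Your proposal is correct, and it shares the paper's overall skeleton: both start from the $K$-functional formula of Lemma \ref{L1gc} (which already encodes the change of variable $x=\varphi(t)$, yielding the weight $(1-\ln t)^{r\theta/p-1}\,dt/t$ against the running supremum of $\psi(s)=(1-\ln s)^{-1/p}(\int_s^1 f_*^p)^{1/p}$), and both finish by invoking Lemma \ref{cvL1} with $\gamma=r\theta/p-1$, $\beta=-1$, $m=r$, whose hypotheses $\gamma>-1$ and $\gamma+\beta\frac{m}{p}+1=\frac{r}{p}(\theta-1)<0$ you check correctly. Where you diverge is the middle step, removing the supremum: the paper stays at the continuous level, bounding $\sup_{0<s<t}\psi(s)\lesssim\int_0^t(1-\ln s)^{-1}\psi(s)\frac{ds}{s}$ by the Bennett--Rudnick lemma (\cite{Bennett-Rudnick}, Lemma 6.1) and then applying the weighted Hardy inequality (\cite{Bennett-Rudnick}, Theorem 6.5), whereas you discretize on the blocks $[t_{k+1},t_k]$, $t_k=2^{1-2^k}$, and use the elementary fact that for an increasing geometric weight $2^{\lambda k}$, $\lambda=\theta r/p>0$, one has $\sum_k 2^{\lambda k}(\sup_{j\GEQ k}a_j)^r\approx\sum_k 2^{\lambda k}a_k^r$ (via $\sup_{j\GEQ k}a_j^r\LEQ\sum_{j\GEQ k}a_j^r$ and summing the geometric partial sums), which is sound and correctly tied to the hypothesis $\theta>0$. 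The trade-off: the paper's route is shorter on the page because it outsources the two estimates to standard Lorentz--Zygmund machinery, while yours is more self-contained and elementary, reusing exactly the dyadic toolkit (\cite{FFGKR}, Lemmas 6.1 and 6.3) already needed to prove Lemma \ref{cvL1}, at the cost of some bookkeeping in passing between the block sums and the tail integrals; either way the chain of equivalences, together with the trivial direction obtained by evaluating the supremum near $s=t$, gives the stated identity of spaces with equivalent quasinorms.
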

\begin{proof} \ \\Let $f\in L^{p)}.$ Then, using Lemma \ref{L1gc}, we get at

\begin{equation}\label{e1gcT1}
\|f\|^r_{(L^{p)},L^p)_{\theta,r}}\approx \int_{0}^{1}(1-\ln t)^{\frac{r\theta}{p}-1}\left[\sup_{0<s<t}\psi(s)\right]^{r}\frac{dt}{t},
\end{equation}
where
$$
\psi(s)=(1-\ln s)^{-1/p}\left(\int_{s}^{1}f_\ast(x)^pdx\right)^{1/p}.
$$
Now,  in view of Lemma \ref{cvL1} (applied with $\gamma=\frac{r \theta}{p}-1$, $\beta=-1$, $m=r$), it is sufficient to establish that
\begin{equation}\label{e2gcT1}
\|f\|^r_{(L^{p)},L^p)_{\theta,r}}\approx \int_{0}^{1}(1-\ln t)^{r(\frac{\theta-1}{p})-1}\left(\int_{t}^{1}f_\ast(x)^pdx\right)^{r/p}\frac{dt}{t}.
\end{equation}
The estimate  $``\gtrsim"$ in (\ref{e2gcT1})  follows trivially from (\ref{e1gcT1}), while for the converse estimate we infer from Bennet-Rudnick Lemma (\cite{Bennett-Rudnick} Lemma $6.1$) that
$$
\sup_{0<s<t}\psi(s)\lesssim \int_{0}^{t}(1-\ln s)^{-1}\psi(s)\frac{ds}{s},
$$
which, combined with (\ref{e1gcT1}), gives
\begin{equation*}
\|f\|^r_{(L^{p)},L^p)_{\theta,r}}\lesssim \int_{0}^{1}\left[(1-\ln t)^{\frac{\theta}{p}-\frac{1}{r}}\int_{0}^{t}(1-\ln s)^{-1}\psi(s)\frac{ds}{s}\right]^{r}\frac{dt}{t},
\end{equation*}
from which follows the desired estimate $``\lesssim"$ in (\ref{e2gcT1}) by Hardy inequality \cite{Bennett-Rudnick} Theorem $6.5$. The proof is complete.
\end{proof}

\subsection{Interpolation between grand  Lebesgue spaces in the critical case}
\begin{lemma}\label{L1gg}\ \\
Let  $1<p<\infty$ and $0<\beta < \alpha.$  Let $f\in L^{p),\alpha}$.  Then, for all $0<t<1,$
\begin{eqnarray*}
K(f,t;L^{p),\alpha},L^{p),\beta})&\approx& \sup_{0<s<\varphi(t)}(1-\ln s)^{-\frac{\alpha}{p}}\left(\int_{s}^{\varphi(t)}f_\ast(x)^p dx\right)^{1/p}\\
&&+t \sup\limits_{\varphi(t)<s<1}(1-\ln s)^{-\frac{\beta}{p}}\left(\int_{s}^{1}f_\ast(x)^pdx\right)^{1/p},
\end{eqnarray*}
where  $\varphi(t)=e^{1-t^\frac{p}{\beta-\alpha}}.$
\end{lemma}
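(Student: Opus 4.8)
The strategy mirrors the proofs of Theorem~\ref{311} and Lemma~\ref{L1gc}: establish the two-sided estimate by proving each direction separately, using the elementary decomposition $f_\ast(x)\LEQ g_\ast(x/2)+h_\ast(x/2)$ for the lower bound and the explicit truncation decomposition from~(\ref{3.2019}) for the upper bound. Denote the claimed right-hand side by
$$
K_1(t)=\sup_{0<s<\varphi(t)}(1-\ln s)^{-\frac{\alpha}{p}}\left(\int_{s}^{\varphi(t)}f_\ast(x)^p dx\right)^{1/p}+t \sup\limits_{\varphi(t)<s<1}(1-\ln s)^{-\frac{\beta}{p}}\left(\int_{s}^{1}f_\ast(x)^pdx\right)^{1/p},
$$
and note that the choice $\varphi(t)=e^{1-t^{p/(\beta-\alpha)}}$ is exactly the balance point where $(1-\ln\varphi(t))^{-\alpha/p}=t\,(1-\ln\varphi(t))^{-\beta/p}$, i.e. $t=(1-\ln\varphi(t))^{(\alpha-\beta)/p}$; this identity is what makes the two scales match up across $\varphi(t)$.

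\textbf{Lower bound $K_1(t)\lesssim K(f,t;L^{p),\alpha},L^{p),\beta})$.} Take an arbitrary decomposition $f=g+h$ with $g\in L^{p),\alpha}$, $h\in L^{p),\beta}$. For the first term, restricting the outer supremum in the definition of $\|g\|_{p),\alpha}$ to $0<s<\varphi(t)$ and using $\int_s^{\varphi(t)}\LEQ\int_s^1$ together with subadditivity of $f_\ast$ controls $\sup_{0<s<\varphi(t)}(1-\ln s)^{-\alpha/p}(\int_s^{\varphi(t)}f_\ast^p)^{1/p}$ by $\|g\|_{p),\alpha}+\sup_{0<s<\varphi(t)}(1-\ln s)^{-\alpha/p}(\int_s^{\varphi(t)}h_\ast^p)^{1/p}$, and on $(0,\varphi(t))$ one has $(1-\ln s)^{-\alpha/p}\LEQ(1-\ln\varphi(t))^{-\alpha/p}(1-\ln s)^{-\beta/p}\cdot(1-\ln s)^{(\beta-\alpha)/p}\cdot\dots$ — more simply, $(1-\ln s)^{-\alpha/p}=(1-\ln s)^{(\beta-\alpha)/p}(1-\ln s)^{-\beta/p}\LEQ(1-\ln\varphi(t))^{(\beta-\alpha)/p}(1-\ln s)^{-\beta/p}=t^{-1}\cdot t\cdot(1-\ln s)^{-\beta/p}$... here one must be careful with the sign of $\beta-\alpha<0$, so in fact $(1-\ln s)^{(\beta-\alpha)/p}$ is \emph{decreasing}, hence bounded on $(0,\varphi(t))$ by its value at $s\to0^+$; the clean route is to bound the $h$-contribution to the first term by $\|h\|_{p),\beta}\sup_{0<s<\varphi(t)}(1-\ln s)^{(\beta-\alpha)/p}=\|h\|_{p),\beta}(1-\ln\varphi(t))^{(\beta-\alpha)/p}=t\|h\|_{p),\beta}$. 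For the second term, $t\sup_{\varphi(t)<s<1}(1-\ln s)^{-\beta/p}(\int_s^1 h_\ast^p)^{1/p}\LEQ t\|h\|_{p),\beta}$ directly, and the $g$-contribution there is bounded by $t\sup_{\varphi(t)<s<1}(1-\ln s)^{(\beta-\alpha)/p}\|g\|_{p),\alpha}\LEQ t(1-\ln\varphi(t))^{(\beta-\alpha)/p}\|g\|_{p),\alpha}$; but $(1-\ln s)^{(\beta-\alpha)/p}\LEQ(1-\ln\varphi(t))^{(\beta-\alpha)/p}$ on $s>\varphi(t)$ since the exponent is negative, giving $\LEQ t\cdot t^{-1}\|g\|_{p),\alpha}=\|g\|_{p),\alpha}$. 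Summing, $K_1(t)\lesssim\|g\|_{p),\alpha}+t\|h\|_{p),\beta}$; taking the infimum over decompositions yields the bound.

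\textbf{Upper bound $K(f,t;L^{p),\alpha},L^{p),\beta})\lesssim K_1(t)$.} Use the decomposition $g=(|f|-f_\ast(\varphi(t)))_+$, $h=f-g$ as in~(\ref{3.2019}), so $g_\ast=(f_\ast-f_\ast(\varphi(t)))_+$ is supported on $[0,\varphi(t)]$ and $h_\ast=f_\ast(\varphi(t))\chi_{[0,\varphi(t)]}+f_\ast\chi_{[\varphi(t),1]}$. For $\|g\|_{p),\alpha}$: since $g_\ast$ vanishes on $[\varphi(t),1]$, only suprema over $s<\varphi(t)$ matter, and $\int_s^1 g_\ast^p=\int_s^{\varphi(t)}g_\ast^p\LEQ\int_s^{\varphi(t)}f_\ast^p$, so $\|g\|_{p),\alpha}\LEQ K_1(t)$ immediately. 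For $t\|h\|_{p),\beta}$: split the supremum defining $\|h\|_{p),\beta}$ into $s<\varphi(t)$ and $s>\varphi(t)$. On $s>\varphi(t)$, $\int_s^1 h_\ast^p=\int_s^1 f_\ast^p$ so that part of $t\|h\|_{p),\beta}$ is exactly the second term of $K_1(t)$. On $s<\varphi(t)$, $\int_s^1 h_\ast^p=[\varphi(t)-s]f_\ast(\varphi(t))^p+\int_{\varphi(t)}^1 f_\ast^p\LEQ\varphi(t)f_\ast(\varphi(t))^p+\int_{\varphi(t)}^1 f_\ast^p$, and $(1-\ln s)^{-\beta/p}\LEQ1$, so this contributes $\lesssim t\,\varphi(t)^{1/p}f_\ast(\varphi(t))+t(\int_{\varphi(t)}^1 f_\ast^p)^{1/p}$; the second piece is again dominated by the second term of $K_1(t)$ (take $s\downarrow\varphi(t)$, or $s=\varphi(t)$, in that supremum, noting $1-\ln\varphi(t)$ absorbs into $\lesssim$ up to the constant $t=(1-\ln\varphi(t))^{(\alpha-\beta)/p}$ — here one checks $(1-\ln\varphi(t))^{-\beta/p}\approx t^{-1}(1-\ln\varphi(t))^{-\alpha/p}$ makes the normalizations agree). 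The genuinely delicate piece is $t\,\varphi(t)^{1/p}f_\ast(\varphi(t))$: one bounds it, exactly as in the proof of Lemma~\ref{L1gc} (the $I_1$/$I_2$-type estimates), by first replacing the point value $f_\ast(\varphi(t))$ with an average, $\varphi(t)f_\ast(\varphi(t))^p\LEQ\int_{\varphi(t)/2}^{\varphi(t)}f_\ast^p\LEQ\int_{\varphi(t)/2}^{\varphi(t)}f_\ast(x)\,dx\cdot\text{(control)}$, then observing $t=(1-\ln\varphi(t))^{(\alpha-\beta)/p}\approx(1-\ln(\varphi(t)/2))^{(\alpha-\beta)/p}$ by Lemma~\ref{l60}(1) (doubling of $1-\ln$), so that $t\,\varphi(t)^{1/p}f_\ast(\varphi(t))\lesssim(1-\ln(\varphi(t)/2))^{-\alpha/p}(\int_{\varphi(t)/2}^{\varphi(t)}f_\ast^p)^{1/p}$, which is a legitimate value of the first supremum in $K_1(t)$ at $s=\varphi(t)/2<\varphi(t)$. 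Collecting the pieces gives $K(f,t)\lesssim\|g\|_{p),\alpha}+t\|h\|_{p),\beta}\lesssim K_1(t)$.

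\textbf{Main obstacle.} The routine direction is the lower bound; the real work is the upper bound, and within it the single term $t\,\varphi(t)^{1/p}f_\ast(\varphi(t))$, i.e. showing the ``boundary value'' contribution of $h$ is reabsorbed into the first supremum of $K_1(t)$. This requires the precise choice $\varphi(t)=e^{1-t^{p/(\beta-\alpha)}}$ so that the exponent bookkeeping $t\approx(1-\ln\varphi(t))^{(\alpha-\beta)/p}$ closes, and it uses the doubling property $1-\ln(\varphi(t)/2)\approx1-\ln\varphi(t)$ (Lemma~\ref{l60}) to pass from a point value to an average over $[\varphi(t)/2,\varphi(t)]$ — exactly the mechanism already used for the terms $I_1,I_2$ in Theorem~\ref{311} and for $t\|h\|_{L^p}$ in Lemma~\ref{L1gc}. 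No Hardy inequality is needed at the level of the $K$-functional itself (that enters only later, when passing to the interpolation norm).
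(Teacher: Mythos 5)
Your overall strategy is the same as the paper's (arbitrary decomposition with $f_\ast(x)\le g_\ast(x/2)+h_\ast(x/2)$ for the lower bound; the truncation decomposition \eqref{3.2019} plus the point-value-to-average trick with doubling of $1-\ln$ for the upper bound), and your lower bound is essentially the paper's argument, apart from two sign slips in asides: $(1-\ln s)^{(\beta-\alpha)/p}$ is \emph{increasing} in $s$ (so its supremum on $(0,\varphi(t))$ is attained at $s=\varphi(t)$, which is what your final formula actually uses), and the factor extracted from the $g$-contribution to the second term should be $(1-\ln s)^{(\alpha-\beta)/p}$. The genuine gap is in the upper bound, and it originates in your balance identity: from $1-\ln\varphi(t)=t^{p/(\beta-\alpha)}$ one gets $t=(1-\ln\varphi(t))^{(\beta-\alpha)/p}$, hence $(1-\ln\varphi(t))^{(\alpha-\beta)/p}=t^{-1}$, not $t$ as you write. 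On the range $s<\varphi(t)$ you bound $(1-\ln s)^{-\beta/p}\le 1$, discarding the factor $\sup_{0<s<\varphi(t)}(1-\ln s)^{-\beta/p}=(1-\ln\varphi(t))^{-\beta/p}$, which tends to $0$ as $t\to 0$ and is exactly what is needed. After this loss neither absorption claim holds: $t\bigl(\int_{\varphi(t)}^{1}f_\ast^p\bigr)^{1/p}$ is not dominated by the second supremum, and $t\,\varphi(t)^{1/p}f_\ast(\varphi(t))$ is not dominated by the first. Concretely, for $f_\ast=\chi_{[0,2\varphi(t)]}$ (with $t$ small) the whole right-hand side of the lemma is $\approx(1-\ln\varphi(t))^{-\alpha/p}\varphi(t)^{1/p}$, while both of your leftover terms equal $t\,\varphi(t)^{1/p}=(1-\ln\varphi(t))^{(\beta-\alpha)/p}\varphi(t)^{1/p}$, which exceeds it by the unbounded factor $(1-\ln\varphi(t))^{\beta/p}$. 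So your chain estimating $t\|h\|_{L^{p),\beta}}$ does not close as written; the lemma is of course true, but your bound on $t\|h\|_{L^{p),\beta}}$ is too lossy to prove it.

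The repair is exactly the paper's computation: since $(1-\ln s)^{-\beta/p}$ is increasing in $s$, evaluate $\sup_{0<s<\varphi(t)}(1-\ln s)^{-\beta/p}=(1-\ln\varphi(t))^{-\beta/p}$ rather than bounding it by $1$. Then the $s<\varphi(t)$ part of $t\|h\|_{L^{p),\beta}}$ splits into $J_2=t(1-\ln\varphi(t))^{-\beta/p}\bigl(\int_{\varphi(t)}^{1}f_\ast^p\bigr)^{1/p}$, which is literally a value of the second supremum, and $J_1\le t(1-\ln\varphi(t))^{-\beta/p}\varphi(t)^{1/p}f_\ast(\varphi(t))=(1-\ln\varphi(t))^{-\alpha/p}\varphi(t)^{1/p}f_\ast(\varphi(t))$, where now the exponents have genuinely converted $t$ into $(1-\ln\varphi(t))^{-\alpha/p}$; your averaging step $\varphi(t)f_\ast(\varphi(t))^p\le 2\int_{\varphi(t)/2}^{\varphi(t)}f_\ast^p$ together with $1-\ln(\varphi(t)/2)\approx 1-\ln\varphi(t)$ then places $J_1$ inside the first supremum. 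With this correction (and the corrected identity $t=(1-\ln\varphi(t))^{(\beta-\alpha)/p}$) your argument coincides with the paper's proof.
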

\begin{proof}\ \\ Fix $f\in L^{p,\infty}$ and $0<t<1.$ Set
$$ K_1(t)=\sup_{0<s<\varphi(t)}(1-\ln s)^{-\frac{\alpha}{p}}\left(\int_{s}^{\varphi(t)}f_\ast(x)^p dx\right)^{1/p},$$
and
$$K_2(t)=t \sup\limits_{\varphi(t)<s<1}(1-\ln s)^{-\frac{\beta}{p}}\left(\int_{s}^{1}f_\ast(x)^pdx\right)^{1/p}.$$
First we show that
\begin{equation}\label{e1L1gg}
K_1(t)+K_2(t)\lesssim K(f,t;L^{p),\alpha},L^{p),\beta}).
\end{equation}
Let $f=g+h$ be an arbitrary decomposition with $g \in L^{\alpha),p}$ and $h \in L^{\beta),p}.$ Using the elementary inequality $f_\ast(x)\LEQ g_\ast(x/2)+h_\ast(x/2) $, we derive
\begin{eqnarray*}
K_1(t)&\lesssim& \|g\|_{L^{p),\alpha}}+ \|h\|_{L^{p),\beta}}\sup_{0<s<\varphi(t)}(1-\ln s)^{\frac{\beta-\alpha}{p}}\\
&=&\|g\|_{L^{p),\alpha}}+ t\|h\|_{L^{p),\beta}},
\end{eqnarray*}
and
\begin{eqnarray*}
K_2(t)&\lesssim& t\|g\|_{L^{p),\alpha}}\sup_{\varphi(t)< s<1}(1-\ln s)^{\frac{\alpha-\beta}{p}}+ t\|h\|_{L^{p),\beta}}\\
&=&\|g\|_{L^{p),\alpha}}+ t\|h\|_{L^{p),\beta}}.
\end{eqnarray*}
Thus, we get
$$K_1(t)+K_2(t)\lesssim \|g\|_{L^{p),\alpha}}+ t\|h\|_{L^{p),\beta}},$$
from which  (\ref{e1L1gg}) follows.

It remains  to establish the converse estimate
\begin{equation}\label{e2L1gg}
 K(f,t;L^{p),\alpha},L^{p),\beta})\lesssim K_1(t)+K_2(t).
\end{equation}
 We again take the same particular decomposition  $f=g+h$ as in Theorem 3.1. (relation (\ref{3.2019}). It is easy to check that
 \begin{equation}\label{e3L1gg}
 \|g\|_{L^{p),\alpha}}\lesssim K_1(t).
\end{equation}
Next we observe  that
\begin{equation}\label{e4L1gg}
t\|h\|_{L^{p),\beta}}= J_1(t)+J_2(t)+K_2(t),
\end{equation}
where
$$J_1(t)=t f_\ast(\varphi(t))\sup\limits_{0<s<\varphi(t)}(1-\ln s)^{-\frac{\beta}{p}}\left( \int_{s}^{\varphi(t)}dx\right)^{1/p},$$
and
$$J_2(t)=t \left( \int_{\varphi(t)}^{1}f_\ast(x)^pdx\right)^{1/p} \sup\limits_{0<s<\varphi(t)}(1-\ln s)^{-\frac{\beta}{p}}.$$
Since $$\sup\limits_{0<s<\varphi(t)}(1-\ln s)^{-\frac{\beta}{p}}=(1-\ln \varphi(t))^{-\frac{\beta}{p}},$$
we have
\begin{equation}\label{e5L1gg}
J_2(t)\LEQ K_2(t).
\end{equation}
Next we show that
\begin{equation}\label{e6L1gg}
J_1(t)\lesssim K_1(t).
\end{equation}
We have
\begin{eqnarray*}
J_1(t)&\LEQ& t f_\ast(\varphi(t))\left( \int_{0}^{\varphi(t)}dx\right)^{1/p}\sup\limits_{0<s<\varphi(t)}(1-\ln s)^{-\frac{\beta}{p}}\\
&=& t^\frac{\alpha}{\alpha-\beta}f_\ast(\varphi(t))\left[\varphi(t) \right]^{1/p}\\
&\lesssim& (1-\ln \varphi(t) )^{-\frac{\alpha}{p}}\left(\int_{\frac{\varphi(t)}{2}}^{\varphi(t)}f_\ast(x)^p dx\right)^{1/p}\\
\end{eqnarray*}
from which follows (\ref{e6L1gg}). Altogether from the relations (\ref{e3L1gg})-(\ref{e6L1gg}), we get (\ref{e2L1gg}). The proof is complete.
\end{proof}
\begin{theorem}\ \\
Let  $1<p<\infty$, $0<\beta < \alpha,$  $0<\theta<1$, and  $1\LEQ r <\infty.$ Then

$$(L^{p),\alpha},L^{p),\beta})_{\theta,r}= G\Gamma(p,r;w_1,w_2),$$
where   $w_1(t)= t^{-1}(1-\ln t)^{\frac{r\theta}{p}(\alpha-\beta)-1}$ and $w_2(t)= (1-\ln t)^{-\alpha}.$
\end{theorem}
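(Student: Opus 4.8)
The plan is to follow the scheme of the preceding theorem, with Lemma~\ref{L1gg} playing the role of Lemma~\ref{L1gc}. Fix $f\in L^{p),\alpha}$ (since $L^{p),\beta}\hookrightarrow L^{p),\alpha}$, this is the sum space of the couple). By Lemma~\ref{L1gg}, with $\varphi(t)=e^{1-t^{p/(\beta-\alpha)}}$,
$$K(f,t;L^{p),\alpha},L^{p),\beta})\approx A(t)+B(t),\quad A(t)=\sup_{0<s<\varphi(t)}(1-\ln s)^{-\alpha/p}\Big(\int_s^{\varphi(t)}f_\ast^p\Big)^{1/p},\quad B(t)=t\sup_{\varphi(t)<s<1}(1-\ln s)^{-\beta/p}\Big(\int_s^{1}f_\ast^p\Big)^{1/p}.$$
Inserting this in $\|f\|^r_{(L^{p),\alpha},L^{p),\beta})_{\theta,r}}=\int_0^1 t^{-\theta r}K(f,t)^r\,\frac{dt}{t}$ and substituting $x=\varphi(t)$, i.e. $t=(1-\ln x)^{(\beta-\alpha)/p}$ (so $\frac{dt}{t}\approx\frac{dx}{x(1-\ln x)}$ and $t^{-\theta r}=(1-\ln x)^{r\theta(\alpha-\beta)/p}$), I obtain
$$\|f\|^r_{(L^{p),\alpha},L^{p),\beta})_{\theta,r}}\approx\int_0^1(1-\ln x)^{\frac{r\theta(\alpha-\beta)}{p}-1}(A_x+B_x)^r\,\frac{dx}{x},$$
with $A_x=\sup_{0<s<x}(1-\ln s)^{-\alpha/p}(\int_s^x f_\ast^p)^{1/p}$ and $B_x=(1-\ln x)^{(\beta-\alpha)/p}\sup_{x<s<1}(1-\ln s)^{-\beta/p}(\int_s^1 f_\ast^p)^{1/p}$. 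Since $\beta<\alpha$ and $\theta<1$, Lemma~\ref{cvL1} applies to $G\Gamma(p,r;w_1,w_2)$ (with $\gamma=\frac{r\theta}{p}(\alpha-\beta)-1$, second weight exponent $-\alpha$, $m=r$), so it suffices to prove
$$\|f\|^r_{(L^{p),\alpha},L^{p),\beta})_{\theta,r}}\approx J:=\int_0^1(1-\ln x)^{\frac{r}{p}(\theta(\alpha-\beta)-\alpha)-1}\Big(\int_x^1 f_\ast^p\Big)^{r/p}\,\frac{dx}{x}.$$

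The estimate $``\gtrsim"$ is immediate: letting $s\to x^{+}$ in the supremum defining $B_x$ gives $B_x\GEQ(1-\ln x)^{-\alpha/p}(\int_x^1 f_\ast^p)^{1/p}$, whence $(A_x+B_x)^r\GEQ B_x^r\GEQ(1-\ln x)^{-r\alpha/p}(\int_x^1 f_\ast^p)^{r/p}$, and integration against $(1-\ln x)^{r\theta(\alpha-\beta)/p-1}\frac{dx}{x}$ reproduces $J$. For $``\lesssim"$ I use $(A_x+B_x)^r\lesssim A_x^r+B_x^r$ and treat the two terms separately. The $A_x$-term is handled exactly as in the preceding theorem: from $\int_s^x f_\ast^p\LEQ\int_s^1 f_\ast^p$ one has $A_x\LEQ\sup_{0<s<x}\psi_\alpha(s)$, $\psi_\alpha(s)=(1-\ln s)^{-\alpha/p}(\int_s^1 f_\ast^p)^{1/p}$; the Bennett--Rudnick lemma (\cite{Bennett-Rudnick} Lemma~6.1) bounds this supremum by $\int_0^x(1-\ln s)^{-1}\psi_\alpha(s)\frac{ds}{s}$, and, since $\frac{r\theta(\alpha-\beta)}{p}>0$, the Hardy inequality (\cite{Bennett-Rudnick} Theorem~6.5) yields $\int_0^1(1-\ln x)^{r\theta(\alpha-\beta)/p-1}A_x^r\frac{dx}{x}\lesssim J$.

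The $B_x$-term is the heart of the matter. Collecting the powers of $(1-\ln x)$, its contribution equals, up to constants,
$$\int_0^1(1-\ln x)^{\mu-1}\Big[\sup_{x<s<1}w(s)\Big]^r\frac{dx}{x},\qquad w(s)=(1-\ln s)^{-\beta/p}\Big(\int_s^1 f_\ast^p\Big)^{1/p},\qquad \mu:=\frac{r(\alpha-\beta)(\theta-1)}{p}<0,$$
and must be controlled by $J$. The key is a dual ``supremum into integral'' estimate: exploiting that $(1-\ln s)^{\beta/p}w(s)=(\int_s^1 f_\ast^p)^{1/p}$ is nonincreasing in $s$ — via a doubling/dyadic decomposition of $(x,1)$ in the logarithmic variable, bounding $w(s)\lesssim w(x)$ where $1-\ln s\GEQ\frac12(1-\ln x)$ and $w(s)^r\lesssim\int_x^1 w(\sigma)^r\frac{d\sigma}{\sigma(1-\ln\sigma)}$ otherwise — one gets
$$\Big[\sup_{x<s<1}w(s)\Big]^r\lesssim w(x)^r+\int_x^1 w(s)^r\,\frac{ds}{s(1-\ln s)}.$$
Plugging this in, the first term already gives the right quantity; for the second, interchange the order of integration, noting that the inner integral $\int_0^s(1-\ln x)^{\mu-1}\frac{dx}{x}\approx(1-\ln s)^{\mu}$ is finite precisely because $\mu<0$. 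One is then left with $\int_0^1(1-\ln x)^{\mu-1}w(x)^r\frac{dx}{x}$, and since $w(x)^r=(1-\ln x)^{-r\beta/p}(\int_x^1 f_\ast^p)^{r/p}$ with $\mu-1-\frac{r\beta}{p}=\frac{r}{p}(\theta(\alpha-\beta)-\alpha)-1$, this is exactly $J$.

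Combining the two contributions with the lower bound gives $\|f\|^r_{(L^{p),\alpha},L^{p),\beta})_{\theta,r}}\approx J\approx\|f\|^r_{G\Gamma(p,r;w_1,w_2)}$ (the last equivalence by Lemma~\ref{cvL1}), which is the assertion. I expect the main obstacle to be the dual supremum estimate for the $B_x$-term — together with the observation that the hypotheses $\theta<1$ and $\beta<\alpha$ enter precisely through the negativity of $\mu$, which is what makes the Fubini step converge; the remaining steps are a routine transcription of the preceding proof.
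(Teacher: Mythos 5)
Your proof is correct and follows essentially the same route as the paper's: the same $K$-functional from Lemma \ref{L1gg}, the same reduction via Lemma \ref{cvL1} (with $\gamma=\frac{r\theta(\alpha-\beta)}{p}-1$, second exponent $-\alpha$, $m=r$) to the equivalence with $J=\int_0^1(1-\ln t)^{\frac{r}{p}(\theta(\alpha-\beta)-\alpha)-1}\big(\int_t^1 f_\ast^p\big)^{r/p}\frac{dt}{t}$, the lower bound by letting $s\to t^+$ in the second supremum, and Bennett--Rudnick plus Hardy for the $A$-term. The only (harmless) deviation is the $B$-term, where the paper again cites Bennett--Rudnick's Lemma 6.1 followed by the Hardy inequality, whereas you prove the same bound by an elementary dyadic ``sup $\lesssim$ endpoint $+$ integral'' estimate exploiting the monotonicity of $\big(\int_s^1 f_\ast^p\big)^{1/p}$ and a Fubini step that converges precisely because $\mu=\frac{r(\alpha-\beta)(\theta-1)}{p}<0$.
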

\begin{proof}\ \\ Let $X=(L^{p),\alpha},L^{p),\beta})_{\theta,r}$ and take $f\in L^{p),\alpha}. $ Then
$$\|f\|^r_X\approx I_1+I_2,$$
where
$$I_1= \int_{0}^{1} (1-\ln t)^{\frac{r\theta(\alpha-\beta)}{p}-1}\left[\sup\limits_{0<s<t}(1-\ln s)^{-\frac{\alpha}{p}} \left(\int_{s}^{t}f_\ast(x)^pdx\right)^{1/p} \right]^r\frac{dt}{t}, $$
and
$$I_2= \int_{0}^{1} (1-\ln t)^{\frac{r(\theta-1)(\alpha-\beta)}{p}-1}\left[\sup\limits_{t<s<1}(1-\ln s)^{-\frac{\beta}{p}} \left(\int_{s}^{1}f_\ast(x)^pdx\right)^{1/p} \right]^r\frac{dt}{t}. $$
Put
$$I_3=\int_{0}^{1} (1-\ln t)^{\frac{r\theta(\alpha-\beta)}{p}-\frac{r\alpha}{p} -1} \left(\int_{t}^{1}f_\ast(x)^pdx\right)^{r/p}\frac{dt}{t}.$$
 In view of Lemma \ref{cvL1} (applied with $\gamma= r\theta(\alpha-\beta)/p-1$, $\beta=-\alpha$, $m=r$), it is sufficient to show that
\begin{equation*}
I_1+I_2\approx I_3.
\end{equation*}
Clearly,  $I_3 \lesssim I_2$. Thus, it remains to establish that $I_1 \lesssim I_3$  and $I_2 \lesssim I_3.$ Now
$$I_1\LEQ \int_{0}^{1} (1-\ln t)^{\frac{r\theta(\alpha-\beta)}{p}-1}\left[\sup\limits_{0<s<t}(1-\ln s)^{-\frac{\alpha}{p}} \left(\int_{s}^{1}f_\ast(x)^pdx\right)^{1/p} \right]^r\frac{dt}{t}, $$
by Bennet-Rudnick Lemma (\cite{Bennett-Rudnick} Lemma $6.1$), we get
$$I_1\lesssim\int_{0}^{1} (1-\ln t)^{\frac{r\theta(\alpha-\beta)}{p}-1}\left[\int_{0}^{t}(1-\ln s)^{-\frac{\alpha}{p}-1} \left(\int_{s}^{1}f_\ast(x)^pdx\right)^{1/p}\frac{ds}{s} \right]^r\frac{dt}{t}, $$
now applying Hardy inequality \cite{Bennett-Rudnick} Theorem $6.5$, we obtain $I_1\lesssim I_3.$  Next we again   make use of  Bennet-Rudnick Lemma (\cite{Bennett-Rudnick} Lemma $6.1$) to derive
 
$$I_2\lesssim \int_{0}^{1} (1-\ln t)^{\frac{r(\theta-1)(\alpha-\beta)}{p}-1}\left[ \int_{t}^{1}(1-\ln s)^{-\frac{\beta}{p}-1} \left(\int_{s}^{1}f_\ast(x)^pdx\right)^{1/p}\frac{ds}{s} \right]^r\frac{dt}{t}, $$
from which follows  $I_2\lesssim I_3$ by Hardy inequality \cite{Bennett-Rudnick} Theorem $6.5$. The proof is complete.
\end{proof}

\subsection{\bf \large The $K$-functional for the couple $(L^{p,\infty},L^p),\ 1<p<+\infty$}
\begin{theorem}\label{ttt1}\ \\
{\it
For a measurable set $E\subset[0,1]$, we denote $|E|_\nu=\DST\int_E \dfrac{dx}x$ and for $f\in L^{p,\infty}+L^p$, $1<p<+\infty$, we define
$$K_p(f,t)=t\sup\Big\{ \left(\int_E f_*^p(\s)d\s\right)^{\frac1p}:|E|_\nu=t^{-p}\Big\}\qquad 
t\in ]0,1].$$ Then
$$K(f,t;L^{p,\infty},L^p)\approx K_p(f,t)$$
and
$$K_p(f,t)=t\left[\int_0^{t^{-p}}\psi_{*,\nu}(x)^pdx\right]^{\frac1p}$$
where $\psi(s)=s^{\frac1s}f_*(s)$, $\psi_{*,\nu}$ its decreasing rearrangement with respect to the measure $\nu$.
}
\end{theorem}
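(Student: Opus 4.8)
The plan is to prove the two-sided equivalence $K(f,t;L^{p,\infty},L^p)\approx K_p(f,t)$ in the usual way — an easy direction bounding $K_p$ from above by an arbitrary decomposition, and a harder direction producing an explicit near-optimal decomposition — and then to identify $K_p(f,t)$ with the rearrangement expression by a direct analysis of the extremal set $E$. For the first inequality $K_p(f,t)\lesssim K(f,t;L^{p,\infty},L^p)$, I would take any splitting $f=g+h$ with $g\in L^{p,\infty}$, $h\in L^p$, use the pointwise bound $f_*(x)\LEQ g_*(x/2)+h_*(x/2)$, and estimate $\left(\int_E f_*^p\right)^{1/p}$ by Minkowski's inequality. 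The $h$-term gives $\lesssim\|h\|_{L^p}$ directly; the $g$-term gives $\left(\int_E g_*^p(x/2)\,dx\right)^{1/p}$, which, since $g_*(x/2)\approx (x/2)^{-1/p}\bigl((x/2)^{1/p}g_*(x/2)\bigr)\LEQ 2^{1/p}x^{-1/p}\|g\|_{L^{p,\infty}}$, is controlled by $\|g\|_{L^{p,\infty}}\bigl(\int_E x^{-1}dx\bigr)^{1/p}=\|g\|_{L^{p,\infty}}|E|_\nu^{1/p}=t^{-1}\|g\|_{L^{p,\infty}}$; multiplying by $t$ and taking the supremum over admissible $E$ then the infimum over decompositions yields the claim.

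For the converse $K(f,t;L^{p,\infty},L^p)\lesssim K_p(f,t)$, I would mimic the truncation decomposition used in Theorem \ref{311}: choose a level $\lambda=\lambda(t)$ (to be selected so that the ``small'' part lives on a $\nu$-set of measure $t^{-p}$) and set $g=(|f|-\lambda)_+$, $h=f-g$, so $g_*=(f_*-\lambda)_+$ and $h_*=\min(f_*,\lambda)$. One then checks $\|g\|_{L^{p,\infty}}$ is controlled by $t\,K_p(f,t)$ via the layer-cake/rearrangement structure of $g_*$, and $\|h\|_{L^p}^p=\int_0^1\min(f_*,\lambda)^p$ splits into the part where $f_*>\lambda$ — a $\nu$-bounded set, contributing the $\int_E f_*^p$ term — plus the tail $\lambda^p\cdot|\{f_*\LEQ\lambda\}|$, which is again absorbed. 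The bookkeeping here is the analogue of the $I_1+I_2+I_3$ estimates in the proof of Theorem \ref{311}, with the Lebesgue measure on the interval replaced by the measure $\nu=dx/x$ on the level set; I expect this to go through with the same Hardy-type inequalities.

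For the identity $K_p(f,t)=t\bigl(\int_0^{t^{-p}}\psi_{*,\nu}(x)^p\,dx\bigr)^{1/p}$, write the constrained supremum as $\sup\{\int_E \phi\,d\nu : |E|_\nu=t^{-p}\}$ where $\phi(s)=s\,f_*(s)^p=\psi(s)^p$ (since $d\s=s\,d\nu$), and recall the classical fact that for a nonnegative function the maximum of $\int_E\phi\,d\nu$ over $\nu$-sets $E$ of prescribed measure $a$ equals $\int_0^a\phi_{*,\nu}(x)\,dx$, attained on a super-level set of $\phi$. Applying this with $a=t^{-p}$ and noting $\phi_{*,\nu}=(\psi^p)_{*,\nu}=(\psi_{*,\nu})^p$ (rearrangement commutes with the increasing map $u\mapsto u^p$) gives $K_p(f,t)^p=t^p\int_0^{t^{-p}}\psi_{*,\nu}(x)^p\,dx$, as desired; I would also note that $\psi$ is bounded, $\psi(s)=s^{1/p}f_*(s)\LEQ\|f\|_{L^{p,\infty}+L^p}$-type bound, so $\psi_{*,\nu}$ is well defined and finite.

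The main obstacle I anticipate is the converse inequality — specifically, verifying that the truncation decomposition with the measure $\nu$ in place of Lebesgue measure still yields $\|g\|_{L^{p,\infty}}\lesssim t\,K_p(f,t)$, since the $L^{p,\infty}$ quasinorm is not an integral and one must argue directly that $\sup_x x^{1/p}g_*(x)$ is governed by $\sup\{(\int_E f_*^p)^{1/p}:|E|_\nu=t^{-p}\}$; getting the right $\lambda(t)$ and handling the region near $x=\varphi(t)$ as in \eqref{eq9}--\eqref{eq11} is where the real work lies.
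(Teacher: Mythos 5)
Your first and third paragraphs are fine and coincide with the paper's argument: the lower bound via $f_*(x)\LEQ g_*(x/2)+h_*(x/2)$, and the identification $K_p(f,t)=t\bigl(\int_0^{t^{-p}}\psi_{*,\nu}^p\,dx\bigr)^{1/p}$ via the Hardy--Littlewood principle, are exactly relations (3.36)--(3.40) and (3.46) of the paper. The genuine gap is in the converse direction: the decomposition you propose, a truncation of $f$ at a \emph{value} level $\lambda(t)$, i.e.\ $g=(|f|-\lambda)_+$, $h=\min(|f|,\lambda)$, cannot give $K(f,t;L^{p,\infty},L^p)\lesssim K_p(f,t)$ in general. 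A first symptom is that your selection rule is vacuous: the set $\{f_*>\lambda\}$ is an interval $(0,s_\lambda)$ and always has \emph{infinite} $\nu$-measure, so $\lambda$ can never be chosen ``so that the small part lives on a $\nu$-set of measure $t^{-p}$''; the relevant level sets are those of $\psi(s)=s^{1/p}f_*(s)$, which are not intervals anchored at $0$. More seriously, value truncation genuinely fails: take $f_*$ equal to a tall constant $H$ on a tiny interval $(0,a)$ with $Ha^{1/p}=R\gg1$, equal to $s^{-1/p}$ on an interval $(b,c)$ with $\ln(c/b)=R^{2p}$, and decaying like $s^{-2/p}$ (suitably normalized) on $(c,1)$, and choose $t^{-p}\approx R^p$. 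Then $\psi_{*,\nu}(t^{-p})\approx1$ and $K_p(f,t)\approx1$, but for any $\lambda$: if $\lambda\LEQ H/2$ the weak part costs $\|g\|_{L^{p,\infty}}\GEQ a^{1/p}(H-\lambda)\GEQ R/2$, while if $\lambda>H/2$ the whole middle bump and the peak stay in $h$, so $t\|h\|_{L^p}\gtrsim t\,(\ln(c/b))^{1/p}\approx R$. Hence the restricted infimum over value truncations is $\gtrsim R$, which can exceed $K(f,t)$ by an arbitrarily large factor; no choice of $\lambda(t)$ and no Hardy-type bookkeeping can repair this.

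What the paper does instead is to truncate $\psi$, not $f$: with $A_t=\{s:\psi(s)>\psi_{*,\nu}(t^{-p})\}$ (so $|A_t|_\nu=t^{-p}$ by equimeasurability) and a measure-preserving map $\sigma$ with $f=f_*\circ\sigma$, it splits $f_*=g_1+g_2$ where $g_1=s^{-1/p}\psi_{*,\nu}(t^{-p})\chi_{A_t}+f_*\chi_{A_t^c}$ and $g_2=s^{-1/p}\bigl(\psi(s)-\psi_{*,\nu}(t^{-p})\bigr)\chi_{A_t}$; then $\|g_1\|_{L^{p,\infty}}\LEQ\psi_{*,\nu}(t^{-p})$ because $g_1(s)\LEQ s^{-1/p}\psi_{*,\nu}(t^{-p})$ pointwise, and $\|g_2\|_{L^p}^p\LEQ\int_0^{t^{-p}}\psi_{*,\nu}^p\,dx$, while $\psi_{*,\nu}(t^{-p})\LEQ t\bigl(\int_0^{t^{-p}}\psi_{*,\nu}^p\bigr)^{1/p}$ by monotonicity. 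Conceptually, the couple $(L^{p,\infty},L^p)$ over $((0,1),ds)$ is transferred to $(L^\infty,L^p)$ over the measure $\nu=dx/x$ acting on $\psi$, and one then truncates at the level $\psi_{*,\nu}(t^{-p})$ as in the classical Holmstedt formula; on the super-level set of $\psi$ one shaves $f_*$ down to $s^{-1/p}\psi_{*,\nu}(t^{-p})$ rather than to a constant. If you replace your second paragraph by this decomposition, the rest of your outline goes through.
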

\pr
Let $f=g+h\in L^{p,\infty}+L^p$. Then, $f_*(s)\LEQ g_*\left(\dfrac s2\right)+h_*\left(\dfrac s2\right)$, for $s\in]0,1]$
\begin{equation}\label{eq3116}
K_p(f,t)\LEQ t\sup_{|E|_\nu=t^{-p}}\left(\int_E g_*\left(\dfrac s2\right)^p\right)^{\frac1p}+t\sup_{E:=|E|_\nu=t^{-p}}\left(\int_E h_*\left(\dfrac s2\right)^pds\right)^{\frac1p}.
\end{equation}
The first term can be bound as follows :
\def\dfsd{{\dfrac s2}}
\begin{equation}\label{eq3117}
t\sup_{|E|_\nu=t^{-p}}
\left(\int_E\left[s^\up g_*\left(\dfsd\right)\right]^p\dfrac{ds}s\right)
\lesssim t||g||_{L^{p,\infty}}\sup_{|E|_\nu=t^{-p}}|E|_\nu^\up=||g||_{L^{p,\infty}}
\end{equation}
While the second term satisfies
\begin{equation}\label{eq3118}
t\sup_{|E|_\nu=t^{-p}}\left(\int_Eh_*\left(\dfsd\right)^pds\right)^\up\LEQ t||h||_{L^p}.
\end{equation}
From the three last relations, we have
\begin{equation}\label{eq3119}
K_p(f,t)\lesssim||g||_{L^{p,\infty}}+t||h||_{L^p}.
\end{equation}
From which we derive
\begin{equation}\label{eq3120}
K_p(f,t)\lesssim K(f,t;L^{p,\infty},L^p).
\end{equation}
For the converse , let $t$ be fixed
and set $\psi(s)=s^\up f_*(s),\ \ s\in[0,1],\ \ \psi_{*,\nu}$ will denote its decreasing rearrangement  with respect to $\nu$, $A_t=\{s:\psi(s)>\psi_{*,\nu}(t^{-p})\}.$\\
By equimesurability, we have
$$|A_t|_\nu=t^{-p}.$$

Let us consider the measure preserving mapping $\s:\R\to (0,+\infty)$ such that $f=f_*\circ\s$  and set  $f_i=g_i\circ\s$, $i=1,2$ where, for $s\in(0,1)$
\begin{eqnarray*}
g_1(s)&=&s^{-\up}\psi_{*,\nu}(t^{-p})\chi_{A_t}(s)+f_*(s)\chi_{A_t^c}(s)\\
g_2(s)&=&s^{-\up}\Big(\psi(s)-\psi_{*,\nu}(t^{-p})\Big)\chi_{A_t}
\end{eqnarray*}
and $A^c_t$ is the complement of $A_t$ in $(0,1)$, say $A^c_t=\Big\{s:\psi(s)\LEQ\psi_{*,\nu}(t^{-p})\Big\}$.\\

Since $\s$ is measure preserving we have
$$||f_2||^p_{L^p}=||g_2||^p_{L^p}=\int_0^{|A_t|_\nu}
\Big(\psi_{*,\nu}(x)-\psi_{*,\nu}(t^{-p})\Big)^pdx.$$
From which we derive
\begin{equation}\label{eq3121}
||f_2||^p_{L^p}\LEQ\int_0^{t^{-p}}\psi_{*,\nu}(x)^pdx.
\end{equation}
While for $f_1$, we have
\begin{eqnarray}
||f_1||_{L^{p,\infty}}&=&||g_1||_{L^{p,\infty}}\LEQ\sup_s\left[\psi_{*,\nu}(t^{-p})\chi_{A_t}(s)
+s^\up f_*(s)\chi_{A_t^c}(s)\right]\nonumber\\
&\LEQ&\sup_s\left[\psi_{*,\nu}(t^{-p})\chi_{A_t}(s)+\psi(s)\chi_{A^c_t}(s)\right]\nonumber\\
&\LEQ&\psi_{*,\nu}(t^{-p})\hbox{ (by definition of }A_t^c).
\label{eq3122}
\end{eqnarray}
Since $f=f_1+f_2\in L^{p,\infty}+L^p$, we derive from relation (\ref{eq3121}) and (\ref{eq3122}) that
\begin{equation}\label{eq3123}
K(f,t;L^{p,\infty},L^p)\LEQ||f_1||_{L^{p,\infty}}+t||f_2||_{L^p}
\LEQ\psi_{*,\nu}(t^{-p})+t\left[\int_0^{t^{-p}}\psi_{*,\nu}^p(x)dx\right]^\up.
\end{equation}
Since the function $x\to \psi_{*,\nu}(x)$ is decreasing one has
\begin{equation}\label{eq3124}
\psi_{*,\nu}(t^{-p})\LEQ t\left[\int_0^{t^{-p}}\psi_{*,\nu}^p(x)dx\right]^\up.
\end{equation}
Thus, we derive from (\ref{eq3123}) and (\ref{eq3124})
\begin{equation}\label{eq3125}
K(f,t;L^{p,\infty},L^p)\lesssim t\left[\int_0^{t^{-p}}\psi^p_{*,\nu}(x)dx\right]^\up.
\end{equation}
Making use of the Hardy Littlewood (see\cite{RakoBook}), we have
\begin{equation}\label{eq3126}
\def\Max{\mathop{\rm Max\,}}
\int_0^{t^{-p}}\psi_{*,\nu}^p(x)dx=\Max_{|E|_\nu=t^{-p}}\int_E\psi^p(s)\dfrac {ds}s=\Max_{|E|_\nu=t^{-p}}\int_Ef_*(s)^pds.
\end{equation}
Thus 
\begin{equation}\label{eq3127}
K_p(f,t)=t\left[\int_0^{t^{-p}}\psi_{*,\nu}(x)^pdx\right]^{\frac1p}.
\end{equation}
This equality with relation (\ref{eq3125}) leads to
$$K(f,t;L^{p,\infty},L^p)\lesssim K_p(f,t).$$
\ \ \HF

As we noticed at the beginning, we recover the Maligranda-Persson's results stating that
$$(L^{p,\infty},L^p)_{\t,\frac p\t}=L^{p,\frac p\t}.$$
{\bf Proof of Maligranda-Persson's result}\\
One has, from the above result
$$||f||^{\frac p\t}_{(L^{p,\infty},L^p)_{\t,\frac p\t}}=\int_0^{+\infty}
\left[t^{1-\t}\left(\int_0^{t^{-p}}\Phi(x)dx\right)^{\frac1p}\right]^{\frac p\t}\dfrac{dt}t\dot=I_0$$
where we set temporarily $\Phi(x)=\psi_{*,\nu}(x)^p$.\\

Making the following change of variable $\s=t^{-p}$, we derive that 
$$I_0\approx\int_0^{+\infty}\left[\s^{\t-1}\left(\int_0^\s\Phi(x)dx\right)\right]^{\frac1\t}
\frac {d\s}\s=\int_0^{+\infty}\left[\dfrac1\s\int_0^\s\Phi(x)dx\right]^{\frac1\t}d\s,$$
but by the Hardy inequality this integral is equivalent to $\DST\int_0^{+\infty}\Phi(x)^{\frac1\t}dx$.\\
Therefore, we have 
$$I_0^{\frac \t p}\approx\left[\int_0^{+\infty}\psi_{*,\nu}(x)^{\frac p\t}dx\right]^{\frac\t p}
=\left[\int_0^1\Big[s^{\frac1p}f_*(s)\Big]^{\frac p\t}\dfrac{ds} s\right]^{\frac\t p}\quad\hbox{(by equimesurability)}.$$
This last quantity is equivalent to the norm of $f$ in $L^{p,\frac p\t}$.\HF

We end this section by proving Theorem 1.6 so we start with the following lemma~:
\begin{lemma}\label{L1LB}\ \\
Let $1<p<\infty.$ Then for any  $f\in L^{p,\infty}$ and all $0<t<1$,
$$  \sup\limits_{0<s<t} s^{\frac{1}{p}} f_\ast(s) \lesssim K(\rho(t),f;L^{p,\infty}, L^{(p} ),$$
where  $\rho(t)=(1-\ln t)^{-1+\frac{1}{p}}.$
\end{lemma}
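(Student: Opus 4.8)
The plan is to estimate the $K$-functional from below by testing it on the particular decomposition $f = g+h$ used throughout Section 3, namely $g = (|f| - f_*(\varphi))_+$ and $h = f-g$ for a suitable cut-off level $\varphi = \varphi(t)$ to be chosen. Since $K(\rho(t),f;L^{p,\infty},L^{(p}) = \inf_{f=g+h}\bigl(\|g\|_{L^{p,\infty}} + \rho(t)\|h\|_{L^{(p}}\bigr)$, it suffices to produce \emph{some} decomposition for which the right-hand side is bounded below by $\sup_{0<s<t} s^{1/p} f_*(s)$, up to a multiplicative constant; equivalently, I want to show that for \emph{every} decomposition $f=g+h$ one has $\sup_{0<s<t}s^{1/p}f_*(s) \lesssim \|g\|_{L^{p,\infty}} + \rho(t)\|h\|_{L^{(p}}$, since then taking the infimum gives the claim. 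This second formulation is the cleaner route and it parallels the lower-bound arguments in Lemmas \ref{L1gc} and \ref{L1gg}.

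The key steps are as follows. First, fix $0<t<1$ and an arbitrary decomposition $f=g+h$ with $g\in L^{p,\infty}$, $h\in L^{(p}$. Use the elementary rearrangement inequality $f_*(s) \le g_*(s/2) + h_*(s/2)$ valid for all $s\in(0,1)$, so that for $0<s<t$,
\begin{equation}\label{eq:split}
s^{1/p} f_*(s) \le s^{1/p} g_*(s/2) + s^{1/p} h_*(s/2).
\end{equation}
The first term on the right of \eqref{eq:split} is controlled directly by $\|g\|_{L^{p,\infty}}$, since $\sup_{0<s<t} s^{1/p} g_*(s/2) \lesssim \|g\|_{L^{p,\infty}}$ (indeed $\le 2^{1/p}\sup_s s^{1/p} g_*(s)$). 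The main work is the second term: I must show $\sup_{0<s<t} s^{1/p} h_*(s/2) \lesssim \rho(t)\,\|h\|_{L^{(p}}$ with $\rho(t) = (1-\ln t)^{-1+1/p}$. Here I expand the $L^{(p}$ quasinorm from its definition, $\|h\|_{(p} = \int_0^1 (1-\ln \tau)^{-\frac1p+\cdots-1}\bigl(\int_0^\tau h_*^p\bigr)^{1/p}\frac{d\tau}{\tau}$; the point is that for fixed $s<t$, the monotone quantity $\bigl(\int_0^{s}h_*^p\bigr)^{1/p} \ge s^{1/p} h_*(s)$, and integrating $(1-\ln\tau)^{\text{(exponent)}}\frac{d\tau}{\tau}$ over $\tau\in(s,1)$ — or over a suitable subinterval near $s$ — produces exactly a factor comparable to $s^{1/p}h_*(s)\,(1-\ln s)^{1-1/p} \ge s^{1/p}h_*(s)(1-\ln t)^{1-1/p}$, which after dividing yields the weight $\rho(t)^{-1}$. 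Rearranging gives the bound.

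The step I expect to be the main obstacle is pinning down the exact power of $(1-\ln s)$ in the lower estimate for $\|h\|_{(p}$ in terms of $s^{1/p}h_*(s)$, and verifying that the worst case over $s\in(0,t)$ indeed occurs at $s = t$ (so that one may replace $1-\ln s$ by $1-\ln t$ and pull the supremum through). This requires care with the specific exponent $-\frac1p + \theta$-type weight appearing in the small-Lebesgue quasinorm and matching it to $\rho(t) = (1-\ln t)^{-1+1/p}$; a clean way is to restrict the $\tau$-integral to $\tau\in(s, s^{1/2})$ or to a dyadic block $\tau\in(s,e\cdot s)$ where $1-\ln\tau \approx 1-\ln s$, bound $\int_0^\tau h_*^p \ge \int_0^s h_*^p \ge s\,h_*(s)^p$, and compute the elementary integral $\int_s^{s^{1/2}}(1-\ln\tau)^{\text{exp}}\frac{d\tau}{\tau}$. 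Once this is done, combining with the $g$-term estimate and taking the infimum over all decompositions completes the proof.

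\medskip
\noindent\textit{(Remark for the write-up: the constant $2^{1/p}$ from the half-argument shift is harmless and absorbed into $\lesssim$; no use is made of $t<1$ beyond ensuring $1-\ln t > 1 > 0$ so that $\rho(t)$ is well-defined and the weights are positive.)}
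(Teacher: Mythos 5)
Your strategy is the same as the paper's: bound $\sup_{0<s<t}s^{1/p}f_*(s)$ for an \emph{arbitrary} decomposition $f=g+h$ via $f_*(s)\le g_*(s/2)+h_*(s/2)$, absorb the $g$-term into $\|g\|_{L^{p,\infty}}$, and extract the factor $\rho(t)^{-1}=(1-\ln t)^{1-1/p}$ from a lower bound for $\|h\|_{L^{(p}}$ using monotonicity of $\tau\mapsto\int_0^\tau h_*^p$. Two points in your sketch need repair, however. First, the dyadic-block variant $\tau\in(s,e s)$ cannot work: there $\int_s^{es}(1-\ln\tau)^{-1/p}\,\frac{d\tau}{\tau}\approx(1-\ln s)^{-1/p}$, which only yields $s^{1/p}h_*(s)\lesssim(1-\ln s)^{1/p}\|h\|_{L^{(p}}$, a bound whose factor grows as $s\to0$ instead of producing the decaying weight $\rho(t)$. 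You must integrate the weight over a logarithmically long interval: the paper fixes $v<t/2$, bounds $\int_0^\tau h_*^p\ge v\,h_*(v)^p$ for $\tau>v$, and uses $\int_t^1(1-\ln\tau)^{-1/p}\frac{d\tau}{\tau}\approx(1-\ln t)^{1-1/p}$, valid because there $t<1/2$. Your choices $\tau\in(s,\sqrt s)$ or $\tau\in(s,1)$ work the same way, but only while $s$ (equivalently $t$) stays away from $1$.

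Second, contrary to your closing remark that $t<1$ plays no further role, the equivalence $\int_s^1(1-\ln\tau)^{-1/p}\frac{d\tau}{\tau}\approx(1-\ln s)^{1-1/p}$ degenerates as $s\to1^-$ (the left-hand side tends to $0$, the right-hand side to $1$), so the range $1/2<t<1$ must be treated separately. This is easy: there $\rho(t)\approx1$ and the needed estimate reduces to $\sup_{s}s^{1/p}h_*(s)\lesssim\|h\|_{L^{(p}}$, i.e.\ the embedding $L^{(p}\hookrightarrow L^{p,\infty}$, which is exactly how the paper disposes of that case before running the weight computation for $0<t<1/2$. With these two adjustments your argument coincides with the paper's proof.
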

\begin{proof} Fix $f\in L^{p,\infty}$ and $0<t<1.$ Set
$$ I(t,f)=\sup\limits_{0<s<t} s^{\frac{1}{p}} f_\ast(s).$$
It is sufficient to show that the following estimate
\begin{equation*}\label{ckt1e1}
I(t,f)\lesssim \|f_0\|_{L^{p,\infty}}+ \rho(t)\|f_1\|_{L^{(p}},
\end{equation*}
holds for  an arbitrary decomposition $f=f_0+f_1$ with $f_0\in L^{p,\infty}$ and $f_1\in L^{(p}.$ In view of the elementary inequality $f_\ast(t)\LEQ f_{0_\ast}(t/2)+f_{1_\ast}(t/2)$, it follows that $$I(t,f)\lesssim I(t/2,f_0)+I(t/2,f_1).$$
Clearly, $I(t/2,f_0)\LEQ \|f_0\|_{L^{p,\infty}}. $  Therefore, it remains to show that
\begin{equation}\label{ckt1e3}
I(t/2,f_1)\lesssim \rho(t)\|f_1\|_{L^{(p}}.
\end{equation}

Note that $\rho(t)\approx 1,\; 1/2<t<1.$ Therefore, (\ref{ckt1e3}) holds for all $1/2<t<1$ in view of the fact that $L^{(p}\hookrightarrow L^{p,\infty}.$ Next let $0<t<1/2$ and take $0<v<t/2.$  Then
\begin{eqnarray*}
\|f_1\|_{L^{(p}} &\GEQ& \int_{v}^{1}(1-\ln s)^{-1/p}\left(\int_{0}^{x}f_{1_\ast}(u)^pdu\right)^{\frac{1}{p}}\frac{ds}{s}\\
&\GEQ& \left(\int_{0}^{v}f_{1_\ast}(u)^p du\right)^{\frac{1}{p}} \int_{v}^{1}(1-\ln s)^{-1/p}\frac{ds}{s}\\
&\GEQ& v^{1/p}f_{1_\ast}(v)\int_{t}^{1}(1-\ln s)^{-1/p}\frac{ds}{s}\\
&\approx& (1-\ln t)^{1-1/p}v^{1/p}f_{1_\ast}(v),
\end{eqnarray*}
whence we obtain  (\ref{ckt1e3}) since $v$ was arbitrarily taken to be between $0$ and $t/2.$  The proof is complete.
\end{proof}
\begin{theorem}\label{t1}
Let $1<p<\infty$, $0 < \theta<1$, and  $1\LEQ r<\infty.$ Then, for any $f \in (L^{p,\infty}, L^{(p})_{\theta,r}, $ one has

$$ ||f||_{G\Gamma(\infty,r;v_1,v_2)} \lesssim ||f||_{(L^{p,\infty}, L^{(p})_{\theta,r}},$$
where $v_1(t)=t^{-1}(1-\ln t)^{r\theta(1-1/p)-1}$ and $v_2(t)= t^{1/p}.$
\end{theorem}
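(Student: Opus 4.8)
The plan is to unwind the definition of the $G\Gamma(\infty,r;v_1,v_2)$-norm and reduce everything to the quantity $I(t,f)=\sup_{0<s<t}s^{1/p}f_*(s)$ that appears in Lemma~\ref{L1LB}. Since $v_2(t)=t^{1/p}$, one has, for $v\in G\Gamma(\infty,r;v_1,v_2)$,
\begin{equation*}
\int_0^t v_*^\infty(\s)v_2(\s)\,d\s=\int_0^t\esssup_{s\GEQ\s}\big(s^{1/p}v_*(s)\big)\cdot\text{(weight)}\,d\s,
\end{equation*}
so that the inner functional collapses to something comparable to $\big[\esssup_{\s\in(0,t)}\s^{1/p}v_*(\s)\big]$ up to the weight $v_2$, and hence
\begin{equation*}
\|f\|^r_{G\Gamma(\infty,r;v_1,v_2)}\approx\int_0^1(1-\ln t)^{r\theta(1-1/p)-1}\Big[\sup_{0<s<t}s^{1/p}f_*(s)\Big]^r\,\frac{dt}{t}=\int_0^1(1-\ln t)^{r\theta(1-1/p)-1}\,I(t,f)^r\,\frac{dt}{t}.
\end{equation*}
(The precise handling of the $\infty$-rearrangement against the weight $v_2$ in the definition of $G\Gamma$ is a small technical point: one must check that $\int_0^t(\sup_{s\GEQ\sigma}s^{1/p}v_*(s))\,d\sigma\approx t\sup_{0<\sigma<t}\sigma^{1/p}v_*(\sigma)$, which follows because $s^{1/p}v_*(s)$ need not be monotone but its running supremum is, and the extra contribution from $s\GEQ t$ is absorbed.)

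Next I would invoke Lemma~\ref{L1LB}, which gives $I(t,f)\lesssim K(\rho(t),f;L^{p,\infty},L^{(p})$ with $\rho(t)=(1-\ln t)^{-1+1/p}$. Substituting this bound into the integral above yields
\begin{equation*}
\|f\|^r_{G\Gamma(\infty,r;v_1,v_2)}\lesssim\int_0^1(1-\ln t)^{r\theta(1-1/p)-1}\,K\big(\rho(t),f;L^{p,\infty},L^{(p}\big)^r\,\frac{dt}{t}.
\end{equation*}
Now I would perform the change of variables $\tau=\rho(t)=(1-\ln t)^{-1+1/p}$, i.e. $1-\ln t=\tau^{-p/(p-1)}=\tau^{-p'}$, so $dt/t=-d(1-\ln t)=p'\,\tau^{-p'-1}\,d\tau$ (for $1/p<1$ the map $t\mapsto\tau$ is a bijection of $(0,1)$ onto $(0,1)$). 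Under this substitution $(1-\ln t)^{r\theta(1-1/p)-1}=\tau^{-p'[r\theta(1-1/p)-1]}$, and collecting powers of $\tau$ one checks that the exponent works out precisely so that the integral becomes
\begin{equation*}
\int_0^1 \big[\tau^{-\theta}K(\tau,f;L^{p,\infty},L^{(p})\big]^r\,\frac{d\tau}{\tau}=\|f\|^r_{(L^{p,\infty},L^{(p})_{\theta,r}},
\end{equation*}
which is the desired estimate. The bookkeeping here is routine but must be done carefully: one needs $-p'[r\theta(1-1/p)-1]+r\cdot 0 + (-p'-1)\cdot\text{(from }dt/t\text{)}$ — more precisely, writing $K(\rho(t),f)=K(\tau,f)$, the power of $\tau$ should add up to $-r\theta-1$, and this is exactly what the choice $v_1(t)=t^{-1}(1-\ln t)^{r\theta(1-1/p)-1}$ was engineered to produce.

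The main obstacle, and the only place where something genuinely nontrivial happens, is Lemma~\ref{L1LB} itself — but that is already proved in the excerpt, so I may assume it. Given the lemma, the remaining work is the identification of the $G\Gamma(\infty,r;v_1,v_2)$ quasinorm with the logarithmic weighted integral of $I(t,f)^r$ (the monotonicity/absorption argument for the $\infty$-rearrangement noted above) and the change-of-variables computation matching the exponents; neither of these presents a real difficulty. One should also note that it suffices to prove the estimate for $f\in(L^{p,\infty},L^{(p})_{\theta,r}$, for which the right-hand side is finite and in particular $f\in L^{p,\infty}+L^{(p}$, so Lemma~\ref{L1LB} applies; and since the interpolation space embeds continuously into $L^{p,\infty}$ (it is an ordered couple with $L^{(p}\hookrightarrow L^{p,\infty}$), there is no integrability issue at $t$ near $1$ where $\rho(t)\approx 1$.
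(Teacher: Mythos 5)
Your argument is correct and is essentially the paper's own proof: both rest on Lemma \ref{L1LB} followed by the substitution $\tau=\rho(t)=(1-\ln t)^{-1+1/p}$ (which the paper phrases as the observation $\rho'(t)/\rho(t)\approx t^{-1}(1-\ln t)^{-1}$), with the $G\Gamma(\infty,r;v_1,v_2)$ quasinorm read, as in the paper, as $\left(\int_0^1 v_1(t)\left[\sup_{0<s<t}s^{1/p}f_*(s)\right]^r dt\right)^{1/r}$. Your parenthetical worry about integrating a running supremum against $v_2$ is moot: for first index $\infty$ the inner integral in the $G\Gamma$ definition is simply replaced by $\esssup_{0<\sigma<t}f_*(\sigma)v_2(\sigma)$, which with $v_2(\sigma)=\sigma^{1/p}$ is exactly $I(t,f)$, so no absorption argument is needed.
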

\begin{proof} Put $\rho(t)=(1-\ln t)^{-1+\frac{1}{p}}, \; 0<t<1.$ It immediately follows from Lemma \ref{L1LB} that

$$ \left(\int_{0}^{1}\left[\rho(t)\right]^{-\theta r} \left[ \sup_{0<s<t} s^{1/p}f_\ast(s)\right]^r \frac{ \rho^\prime (t) }{\rho(t)}dt\right)^{1/r}  \lesssim ||f||_{(L^{p,\infty}, L^{(p})_{\theta,r}}.$$
The simple observation
$$ \frac{ \rho^\prime (t) }{\rho(t)}\approx t^{-1}(1-\ln t)^{-1} $$
completes the proof.
\end{proof}

Now the estimate resulting from Theorem 1.3 in \cite{FFGKR} is:
\begin{theorem}\label{t2}\ \\
Let $1< p<\infty$,  $0 < \theta<1$, and $1\LEQ r<\infty.$ Then
$$\|f\|_{G\Gamma(p,r;w_1,w_2)}\lesssim \|f\|_{(L^{p,\infty}, L^{(p} )_{\theta,r}},$$
where  $w_1(t)=t^{-1}(1-\ln t)^{r\theta-1}$ and $w_2(t)= (1-\ln t)^{-1}.$
\end{theorem}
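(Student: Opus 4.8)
The plan is to obtain Theorem \ref{t2} as a direct consequence of the identification of the interpolation space $(L^{p)},L^{(p})_{\theta,r}$ carried out in \cite{FFGKR} (Theorem 1.3 there, of which Theorem \ref{tt2} above is the special case $r=1$, $p=n'$), combined with the elementary continuous embedding $L^{p,\infty}(\O)\hookrightarrow L^{p)}(\O)$. In short: $L^{p,\infty}\hookrightarrow L^{p)}$ forces $(L^{p,\infty},L^{(p})_{\theta,r}\hookrightarrow(L^{p)},L^{(p})_{\theta,r}$, and the latter space is exactly $\GG(p,r;w_1,w_2)$ with the stated weights.

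First I would record the embedding $L^{p,\infty}\hookrightarrow L^{p)}$. For $f\in L^{p,\infty}$ and $0<t<1$ one has
$$\int_t^1 f_*^p(s)\,ds\LEQ\|f\|_{L^{p,\infty}}^p\int_t^1\frac{ds}{s}=\|f\|_{L^{p,\infty}}^p\,|\Log t|\LEQ\|f\|_{L^{p,\infty}}^p\,(1-\Log t),$$
so that, directly from the definition of the grand Lebesgue norm, $\|f\|_{L^{p)}}\lesssim\|f\|_{L^{p,\infty}}$.

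Next I would pass to the $K$-functionals by monotonicity of the real method in the first endpoint space. Any decomposition $f=g+h$ with $g\in L^{p,\infty}$ and $h\in L^{(p}$ is also a decomposition of $f$ in $L^{p)}+L^{(p}$, and by the embedding just proved $\|g\|_{L^{p)}}\lesssim\|g\|_{L^{p,\infty}}$; taking the infimum over all such decompositions gives
$$K(f,t;L^{p)},L^{(p})\lesssim K(f,t;L^{p,\infty},L^{(p}),\qquad 0<t<1.$$
Multiplying by $t^{-\theta-1/r}$ and taking the $L^r(0,1)$ quasi-norm then yields the continuous embedding $(L^{p,\infty},L^{(p})_{\theta,r}\hookrightarrow(L^{p)},L^{(p})_{\theta,r}$.

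Finally I would invoke Theorem 1.3 of \cite{FFGKR}, which gives $(L^{p)},L^{(p})_{\theta,r}=\GG(p,r;w_1,w_2)$ precisely for $w_1(t)=t^{-1}(1-\Log t)^{r\theta-1}$ and $w_2(t)=(1-\Log t)^{-1}$; chaining it with the previous embedding produces $\|f\|_{\GG(p,r;w_1,w_2)}\lesssim\|f\|_{(L^{p,\infty},L^{(p})_{\theta,r}}$, which is the assertion. There is no genuine obstacle in this argument; the only point deserving a little care is that the conventions used here for the real interpolation functor (norms truncated to $(0,1)$, parameter normalization $t^{-\theta-1/r}$) coincide with those under which Theorem 1.3 of \cite{FFGKR} was established, so that the weights $w_1,w_2$ come out exactly as written — this is routine.
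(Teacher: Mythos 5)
Your proposal is correct and is essentially the paper's own route: the paper presents Theorem \ref{t2} exactly as ``the estimate resulting from Theorem 1.3 in \cite{FFGKR}'', i.e.\ it combines the embedding $L^{p,\infty}\hookrightarrow L^{p)}$ (hence monotonicity of the $K$-functional and of the real interpolation functor in the first endpoint, as already used in the introduction for $r=1$) with the identification $(L^{p)},L^{(p})_{\theta,r}=G\Gamma(p,r;w_1,w_2)$ from \cite{FFGKR}. Your write-up simply makes explicit the steps the paper leaves implicit, so there is nothing to correct.
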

The G$\Gamma$ spaces in Theorems \ref{t1} say $G\Gamma(\infty,r;v_1,v_2) $ and \ref{t2} say $G\Gamma(p,r,w_1,w_2)$  are not comparable. Thus we get Theorem 1.6.

\section{\bf \large  Some interpolation inequalities for Small and Grand Lebesgue spaces}
One may combine the above results with some standard results on interpolation spaces to deduce few inequalities as Property \ref{propri1}.\\
We recall the following result that can be found in \cite{Tartar}.

\begin{theorem}\label{t700}\ \\
{\it
Let $E_0$ and $E_1$ be two Banach spaces continuously embedded into some topological 
vector space. \\
For $0\LEQ\theta\LEQ1$, one has
$$(E_0,E_1)_{\theta,1}\subset E\subset E_0+E_1$$
if and only if
$$\hbox{ there exists $c>0:\forall\,a\in E_0\cap E_1\qquad||a||_E\LEQ c||a||^{1-\theta}_0||a||_1^\theta$}$$
where $||\cdot||_i$ denotes the norm in $E_i$, $i=0,1$.
}
\end{theorem}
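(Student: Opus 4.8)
This is a classical characterization (see \cite{Tartar, Bergh-Lofstrom}); the plan is to prove the two implications separately, with $E$ a Banach space, all inclusions read as continuous embeddings, and $(E_0,E_1)_{\theta,1}$ the ordered-couple Peetre space discussed above. The common ingredient is the elementary bound
$$K(a,t)\LEQ\min\{\,||a||_0,\ t\,||a||_1\,\}\qquad(a\in E_0\cap E_1),$$
which comes from the trivial decompositions $a=a+0=0+a$.

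From this I would first derive the \emph{reproduction inequality}: for $0<\theta<1$ and $a\in E_0\cap E_1$,
$$||a||_{(E_0,E_1)_{\theta,1}}\LEQ\Big(\frac1\theta+\frac1{1-\theta}\Big)\,||a||_0^{1-\theta}\,||a||_1^{\theta},$$
by substituting the bound for $K(a,t)$ into $||a||_{(E_0,E_1)_{\theta,1}}=\int_0^1 t^{-\theta}K(a,t)\,\frac{dt}{t}$, splitting the integral at $t_0=||a||_0/||a||_1$ (when $t_0\GEQ1$ only the branch $t\,||a||_1$ occurs on $(0,1)$, and the claim is checked there by hand), and evaluating two power integrals. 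Granting this, the implication ``$(E_0,E_1)_{\theta,1}\subset E\subset E_0+E_1\ \Longrightarrow$ interpolation inequality'' is immediate: for $a\in E_0\cap E_1\subset(E_0,E_1)_{\theta,1}$ one has $||a||_E\LEQ c_1||a||_{(E_0,E_1)_{\theta,1}}\LEQ c_1\big(\frac1\theta+\frac1{1-\theta}\big)||a||_0^{1-\theta}||a||_1^{\theta}$, $c_1$ being the embedding constant.

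For the converse I would invoke the $J$-representation of $(E_0,E_1)_{\theta,1}$ (fundamental lemma of the real interpolation method, \cite{Bergh-Lofstrom}). Assuming the interpolation inequality and fixing $a\in(E_0,E_1)_{\theta,1}$, write $a=\sum_{n\in\Z}u_n$, converging in $E_0+E_1$, with $u_n\in E_0\cap E_1$ and $\sum_n 2^{-n\theta}J(2^n,u_n)\lesssim||a||_{(E_0,E_1)_{\theta,1}}$, where $J(t,u)=\max\{||u||_0,\,t\,||u||_1\}$. Since $||u_n||_0\LEQ J(2^n,u_n)$ and $||u_n||_1\LEQ 2^{-n}J(2^n,u_n)$, the interpolation inequality yields
$$||u_n||_E\LEQ c\,||u_n||_0^{1-\theta}||u_n||_1^{\theta}\LEQ c\,J(2^n,u_n)^{1-\theta}\big(2^{-n}J(2^n,u_n)\big)^{\theta}=c\,2^{-n\theta}J(2^n,u_n),$$
so $\sum_n||u_n||_E\lesssim||a||_{(E_0,E_1)_{\theta,1}}<\infty$. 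By completeness of $E$, $\sum_n u_n$ converges in $E$ to some $b$ with $||b||_E\lesssim||a||_{(E_0,E_1)_{\theta,1}}$; since $E\hookrightarrow E_0+E_1$, this series also converges to $b$ there, whence $b=a$. Therefore $(E_0,E_1)_{\theta,1}\hookrightarrow E$, while $E\subset E_0+E_1$ holds by the hypothesis on $E$.

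The step that I expect to need the most care is not conceptual but technical: matching the present paper's definition of $(E_0,E_1)_{\theta,1}$, whose defining integral runs only over $(0,1)$, with the $(0,\infty)$-based machinery behind the $J$-representation. For an ordered couple $E_1\hookrightarrow E_0$ this is harmless, since $K(a,t)\LEQ||a||_{E_0}$ for all $t$ makes the discarded tail $\int_1^\infty t^{-\theta-1}K(a,t)\,dt$ at most $\theta^{-1}||a||_{E_0}$, while conversely $K(a,t)\gtrsim||a||_{E_0}$ already for $t\in(\frac12,1)$, so that $||a||_{E_0}\lesssim||a||_{(E_0,E_1)_{\theta,1}}$ and the restricted and full norms are equivalent; the endpoint cases $\theta\in\{0,1\}$ degenerate and are disposed of by the corresponding one-sided estimates.
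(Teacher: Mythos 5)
Your argument is correct, but note that the paper itself offers no proof of Theorem \ref{t700}: it is explicitly recalled from \cite{Tartar} and then only applied (to the ordered couples $(L^{n')},L^{(n'})$ and $(L^{n'},L^{(n'})$). What you have written is essentially the standard textbook proof underlying that citation, and it is sound: the necessity part via $K(a,t)\LEQ\min\{\|a\|_0,t\|a\|_1\}$ and the splitting at $t_0=\|a\|_0/\|a\|_1$, giving $\|a\|_{(E_0,E_1)_{\theta,1}}\LEQ(\tfrac1\theta+\tfrac1{1-\theta})\|a\|_0^{1-\theta}\|a\|_1^{\theta}$; the sufficiency part via the fundamental lemma of the $J$-method, the pointwise bound $\|u_n\|_E\LEQ c\,2^{-n\theta}J(2^n,u_n)$, absolute convergence in $E$, and identification of the limit through $E\hookrightarrow E_0+E_1$. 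Your extra care about the paper's $(0,1)$-truncated norm versus the usual $(0,\infty)$ norm is well placed and correctly resolved for ordered couples $E_1\hookrightarrow E_0$, which is exactly the situation in which the paper invokes the theorem (for a general couple the truncated space is strictly larger and the sufficiency direction as stated would need the full norm). Two caveats you should make explicit rather than implicit: the converse genuinely uses that $E$ is complete (a merely normed intermediate space would not let you conclude $a\in E$ from the absolutely convergent series), and the endpoint cases $\theta\in\{0,1\}$ are not merely ``disposed of by one-sided estimates'' --- at $\theta=1$ the reproduction inequality fails (the integral $\int_0^1\|a\|_1\,\frac{dt}{t}$ diverges, so $E_0\cap E_1\not\subset(E_0,E_1)_{1,1}$ in general), so the clean equivalence is really a statement for $0<\theta<1$, which is also the only range the paper uses; the blanket range $0\LEQ\theta\LEQ1$ in the statement should be read with that proviso.
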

{\bf Proof of Property \ref{propri1}}\\
We apply the above Theorem \ref{t700} with $E_0=L^{n')}$,  $E_1=L^{(n'}$.\\
Then from Theorem $\ref{tt2}$ and Corollary \ref{c2} of Proposition \ref{p21}  one has
$$\big(L^{n')},L^{(n'}\big)_{\alpha,1}=L^{(n',\beta}$$ with $\beta=n\alpha-n+1$ and $\alpha>\dfrac1{n'}$.\\
Since $L^{(n',\beta}\subset L^{n',\infty}\subset L^{n')}$, we deduce the result from Theorem $\ref{t700} $ with $E=L^{n',\infty},\ \theta=\alpha$.\\

The same argument holds for the second inequality, since
$$\big(L^{n'},L^{(n'}\big)_{\alpha,1}=L^{(n',\alpha}\subset E=\Big(L^{n',\infty},L^{(n'}\Big)_{\alpha,1}.$$
\ \ \HF
\section{\bf A remark on the associate space of $G\Gamma(p,r;\omega)$}
\begin{theorem}\ \\
 Let $1<p<\infty$, $1<r<\infty$ and $\delta>0.$ Put $w(t)=t^{-1}(1-\ln t)^{\delta-1}.$  Then
$$ \left[G\Gamma(p,r;w)\right]^\prime=G\Gamma(p^\prime,r^\prime;w_1,w_2),$$
where $w_1(t)=t^{-1}(1-\ln t)^{\frac{r^\prime \delta}{r}-1}$ and $w_2(t)=(1-\ln t)^{-\frac{2 p^\prime \delta}{r}}$.
\end{theorem}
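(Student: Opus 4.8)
The plan is to compute the associate space by duality, using the known description of $G\Gamma$ spaces as classical Lorentz spaces combined with Hardy-type dualities. First I would observe that, writing $m=r$, $\gamma=\delta-1>-1$ and $\beta=0$, the norm on $G\Gamma(p,r;w)=G\Gamma(p,r;w,1)$ is exactly
\[
\|f\|^r_{G\Gamma(p,r;w)}\approx\int_0^1(1-\ln t)^{\delta-1}\Bigl(\int_0^t f_*^p(s)\,ds\Bigr)^{r/p}\frac{dt}{t},
\]
so the underlying functional is a \emph{monotone} (rearrangement-invariant) functional in $f_*^p$. The associate space is by definition $\bigl[G\Gamma(p,r;w)\bigr]'=\{g:\ \int_0^1 f_*(x)g_*(x)\,dx\lesssim\|f\|_{G\Gamma(p,r;w)}\ \forall f\}$ after the standard reduction (via Hardy--Littlewood and the fact that the norm depends only on $f_*$) to comparing $\int_0^1 f_*g_*$. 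So the heart of the matter is a duality for the operator $f_*\mapsto \bigl(\int_0^\cdot f_*^p\bigr)^{1/p}$ weighted against $(1-\ln t)^{\delta-1}t^{-1}$.

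The key steps, in order, would be: (1) Set $F(t)=\int_0^t f_*^p(s)\,ds$, a nondecreasing concave-in-the-integrated-sense function; change the problem into a weighted norm inequality between $L^1$ (or rather the cone of nonincreasing functions) and the weighted $L^{r/p}$ space with weight $w$. (2) Apply the duality principle for the cone of nonincreasing functions (Sawyer's duality, or the reduction used in \cite{FFGKR}, Gogatishvili--Pick type theorems): the associate norm of a functional of the form $\|F^{1/p}\|_{L^{r}((1-\ln t)^{\delta-1}\,dt/t)}$ is again of $G\Gamma$ type, with the conjugate indices $p'$ and $r'$. Concretely, one expects
\[
\|g\|_{[G\Gamma(p,r;w)]'}\approx\Bigl(\int_0^1 W(t)^{?}\,\Bigl(\int_0^t g_*^{p'}(s)u(s)\,ds\Bigr)^{r'/p'}\frac{dt}{t}\Bigr)^{1/r'}
\]
where $W(t)=\int_0^t w$ or a related primitive and $u$ is a power of $(1-\ln t)$ coming from integrating the weight; matching exponents forces $w_1(t)=t^{-1}(1-\ln t)^{r'\delta/r-1}$ and $w_2(t)=(1-\ln t)^{-2p'\delta/r}$. (3) Verify the two conditions c1)--c2) in the definition of $G\Gamma(p',r';w_1,w_2)$ for these weights (using the Example on logarithmic weights), so that the right-hand side is genuinely a $G\Gamma$-quasinorm. (4) Prove both inequalities $\lesssim$ and $\gtrsim$: one direction by testing the associate-space definition against suitable near-extremal $f$ (step functions on dyadic-logarithmic blocks $t_k=2^{1-2^k}$, as in Lemma \ref{l60} and Lemma \ref{cvL1}), the other by Hölder on each block together with the discretized equivalence from those lemmas, then reassembling via Lemma \ref{l60}(2).

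The main obstacle I anticipate is getting the \emph{exact} exponents in $w_1$ and $w_2$ right — in particular the somewhat unusual exponent $-2p'\delta/r$ in $w_2$, which signals that integrating the primitive of $w$ and then conjugating produces a genuinely different logarithmic power rather than a symmetric one; the factor of $2$ and the appearance of $p'$ rather than $p$ must be tracked carefully through the Hardy duality and the change of variables. A secondary technical point is that $\delta>0$ but $r'\delta/r-1$ can be negative, so one is in the regime of Lemma \ref{cvL1} (tail integrals $\int_t^1$) rather than Proposition \ref{p21}; the discretization lemmas from \cite{FFGKR} must be invoked in the correct form, and one must check the hypothesis $\int_0^1 H\lesssim\int_0^{1/2}H$ holds for the relevant $H$. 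Once the discretized version of both the $G\Gamma(p,r;w)$ norm and its candidate associate norm are in hand, the duality reduces to the elementary $\ell^r$--$\ell^{r'}$ duality on sequences with matched exponential weights, which is routine.
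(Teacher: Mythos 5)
Your overall strategy (compute the associate norm directly via duality on the cone of nonincreasing functions plus the discretization machinery of Lemma \ref{l60} and Lemma \ref{cvL1}) could in principle be made to work, but as written the proposal has a genuine gap at its centre. Your step (2) --- that the associate norm of $f\mapsto\bigl(\int_0^1 w(t)\bigl(\int_0^t f_*^p\bigr)^{r/p}\,dt\bigr)^{1/r}$ is ``again of $G\Gamma$ type with conjugate indices,'' with the weights then ``forced'' by exponent matching --- is precisely the statement to be proved, and you neither quote a precise duality theorem that yields it nor carry out the computation; indeed you explicitly defer the determination of the exponents and call it the main obstacle. Sawyer-type duality applies to $\Lambda$-functionals $\int f_*^p v$, not directly to the present $\Gamma$-type functional; the general result that does apply (\cite{GPS}, Theorem 1.1) produces a considerably more complicated expression --- the paper's own Remark 5.2 says exactly this --- and showing that it collapses to the stated $G\Gamma(p',r';w_1,w_2)$ norm is the substantive work that your outline replaces by ``one expects.'' Moreover, the assertion that matching exponents \emph{forces} $w_2(t)=(1-\ln t)^{-2p'\delta/r}$ is not correct: with $\gamma=\frac{r'\delta}{r}-1>-1$ and $\gamma+\beta\frac{r'}{p'}+1=-\frac{r'\delta}{r}<0$, Lemma \ref{cvL1} shows the target space depends only on the combination $\gamma+\beta\frac{r'}{p'}$, i.e. it coincides with the space normed by $\bigl(\int_0^1(1-\ln t)^{-\frac{r'\delta}{r}-1}\bigl(\int_t^1 f_*^{p'}(x)\,dx\bigr)^{r'/p'}\frac{dt}{t}\bigr)^{1/r'}$. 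So the factor $2$ you propose to ``track carefully through the Hardy duality'' is not an output of any duality computation but an artifact of a parameter choice, and no amount of exponent-chasing will produce it as a forced value.

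For comparison, the paper's proof is short and purely interpolation-theoretic, with no Hardy-type or discretization computation at all: setting $\eta=p'\delta/r$, $\alpha=2\eta$, $\beta=\eta/2$, $\theta=1/3$, one has $G\Gamma(p,r;w)=(L^{(p,\beta},L^{(p,\alpha})_{\theta,r}$ by \cite{AHG}; the duality theorem for the real method (\cite{Bergh-Lofstrom}, Theorem 3.7.1) together with the small/grand associate duality gives $[G\Gamma(p,r;w)]'=(L^{p'),\alpha},L^{p'),\beta})_{1-\theta,r'}$, and Theorem 3.6 identifies this interpolation space as $G\Gamma(p',r';w_1,w_2)$ --- this is where the specific exponents, including the $2$, actually come from. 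If you prefer to salvage your direct route, the workable version is: discretize $\|f\|^r_{G\Gamma(p,r;w)}\approx\sum_k 2^{\delta k}\bigl(\int_{t_{k+1}}^{t_k}f_*^p\bigr)^{r/p}$ via Lemma \ref{l60}; prove the blockwise upper bound for $\int_0^1 f_*g_*$ by H\"older and the lower bound by exhibiting nonincreasing near-extremizers constant on the blocks (the monotonicity verification here is a real step, handled by the discretization/anti-discretization technique of \cite{GP}, and cannot be waved through); apply weighted $\ell^{r}$--$\ell^{r'}$ duality to the resulting sequences to get $\sup\int_0^1 f_*g_*\approx\bigl(\sum_k 2^{-\frac{r'\delta}{r}k}\bigl(\int_{t_{k+1}}^{t_k}g_*^{p'}\bigr)^{r'/p'}\bigr)^{1/r'}$; and finally anti-discretize with Lemma \ref{cvL1} to recognize this as the $G\Gamma(p',r';w_1,w_2)$ norm. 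Until those steps are actually carried out, the proposal is a plausible plan rather than a proof.
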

\begin{proof}\ \\
Put $\eta=p^\prime \delta/r,$ and take $\alpha=2\eta,$ $\beta=\eta/2$ and $\theta=1/3.$ Then by \cite{AHG} Theorem $7$, we have
$$ G\Gamma(p,r;w)=(L^{(p,\beta},L^{(p,\alpha})_{\theta,r}.$$
Thus, using duality relation of real interpolation spaces (see, for instance, \cite{Bergh-Lofstrom} Theorem $3.7.1$), we get
\begin{eqnarray*}
 \left[G\Gamma(p,r;w)\right]^\prime&=&(L^{p^\prime),\beta},L^{p^\prime),\alpha})_{\theta,r^\prime}\\
 &=&(L^{p^\prime),\alpha},L^{p^\prime),\beta})_{1-\theta,r^\prime},
\end{eqnarray*}
finally, an application of Theorem 3.6  completes the proof.
\end{proof}
\begin{remark}{\em In view of Lemma $2.4$ , we get the following equivalent norm on $\left[G\Gamma(p,r;w)\right]^\prime$:
$$\|f\|_{\left[G\Gamma(p,r;w)\right]^\prime}\approx \left(\int_{0}^{1}(1-\ln t)^{ -\frac{r^\prime \delta}{r}-1}\left(\int_{t}^{1}f_\ast(x)^pdx\right)^{\frac{r^\prime}{p^\prime}}\frac{dt}{t}\right)^{1/r^\prime}, $$
which is apparently simpler than the one which follows from \cite{GPS} Theorem $1.1$ (vi).
}
\end{remark}
{\bf Acknowledgment }  : \\
The third author, M.R. Formica has been partially supported by Gruppo Nazionale per l'Analisi Matematica, la Probabilit\`a e le loro Applicazioni (GNAMPA) of the Istituto Nazionale di Alta Matematica (INdAM) and by Universit\`a degli Studi di Napoli Parthenope through the project "sostegno alla Ricerca individuale".\\
The work of fourth author, A.Gogatishvili , has been partially supported by Shota Rustaveli National Science Foundation of Georgia (SRNSFG) [grand number FR17-589], by Grant 18-00580S of the Czech Science Foundation and RVO:67985840.
We address a grateful thanks to Professor Teresa Signes to point out an error in the first version of this work.

\ \\

\end{document}